\documentclass[11pt]{amsart}
\usepackage[a4paper,margin=2.4cm]{geometry}

\usepackage{setspace}
\usepackage{amssymb}
\usepackage{amsmath}
\usepackage{amsthm}

\usepackage[shortlabels]{enumitem}
\usepackage{tikz}
\usepackage{arydshln}

\newtheorem{theorem}{Theorem}[section]
\newtheorem*{theorem*}{Theorem}
\newtheorem{proposition}[theorem]{Proposition}
\newtheorem{lemma}[theorem]{Lemma}

\newtheorem{claim}[theorem]{Claim}

\theoremstyle{definition}
\newtheorem{definition}[theorem]{Definition}

\newtheorem{remark}[theorem]{Remark}

\newtheoremstyle{named}{}{}{\itshape}{}{\bfseries}{.}{.5em}{\thmnote{#3}}
\theoremstyle{named}

\usepackage[pdftex,unicode]{hyperref}   % Must follow all other packages
\hypersetup{unicode}
\hypersetup{breaklinks=true}
\hypersetup{urlcolor=blue}

\newcommand{\F}{\mathbb F}

\renewcommand{\P}{\mathbb P}
\renewcommand{\S}{\mathbb S}
\newcommand{\f}    [1]{\mathbb{F}_{#1}}

\newcommand{\w}{\omega}
\newcommand{\U}{\underline}

\newcommand{\N}{\mathbb N}

\DeclareMathOperator{\GammaL}{\Gamma L}
\DeclareMathOperator{\PGammaL}{P\Gamma L}

\DeclareMathOperator{\diag}{diag}
\DeclareMathOperator{\PGL}{PGL}
\DeclareMathOperator{\QH}{QH}
\DeclareMathOperator{\SQH}{SQH}
\DeclareMathOperator{\PQH}{PQH}
\DeclareMathOperator{\PSQH}{PSQH}

\DeclareMathOperator{\End}{End}
\DeclareMathOperator{\GL}{GL}
\DeclareMathOperator{\Aut}{Aut}
\DeclareMathOperator{\Autt}{Autt}
\DeclareMathOperator{\Tr}{Tr}
\DeclareMathOperator{\Nm}{Nm}
\DeclareMathOperator{\id}{id}

\DeclareMathOperator{\im}{Im}

\DeclareMathOperator{\Fix}{Fix}

\newcommand{\PN}[1]{{\mathbb{P}^{#1}}}

\newcommand{\parskipsize}{0.4em}
\usepackage{enumitem}
\setlist{  
  listparindent=\parindent,
  parsep=\parskip,
}

\begin{document}

\title{Commutative Semifields from bijections of the Desarguesian plane}
\author{Faruk G\"{o}lo\u{g}lu}
\address{Charles University}
\curraddr{}
\email{faruk.gologlu@mff.cuni.cz}
\thanks{}
\author{Lukas K\"olsch}
\address{University of South Florida}
\curraddr{}
\email{lukas.koelsch.math@gmail.com}
\thanks{}
\subjclass[2020]{Primary XXXXX; Secondary XXXXX}
\date{}
\dedicatory{}

\begin{abstract}
The Menichetti-Kaplansky theorem [J. Algebra \textbf{47}(2) (1977)] 
states that a finite semifield
that is three-dimensional over its center is either a field 
or a twisted field of Albert. This implies that a quadratic 
homogeneous bijection of $\PN 2(\F_q)$ is equivalent to 
a Dembowski-Ostrom monomial. In this paper, we give a large class 
of semiquadratic homogeneous bijections of $\PN 2(\F_q)$ that are 
inequivalent to Dembowski-Ostrom monomials. Using these bijections, 
we construct a large family of commutative semifields  that are 
non-isotopic to finite fields or twisted fields, which in turn give 
rise to a large family of non-Desarguesian commutative semifield planes. 
Semiquadratic homogeneous bijections of $\PN 1(\F_q)$ have been classified only 
recently by the first-named author [Finite Fields Appl. \textbf{81} (2022)] 
and Ding and Zieve [Proc. London Math. Soc. \textbf{127}(2) (2023)]
with the result that all such bijections are either equivalent 
to Dembowski-Ostrom monomials or degenerate. 
We demonstrate that this is not the 
case for $\PN 2(\F_q)$. 

\end{abstract}

\maketitle

\setcounter{tocdepth}{1}
\tableofcontents

%%%%%%%%%%%%%%%%%%%%%%%%%%%%%%%%%%%%%%%%%%%%%%%%%%%%%%%%%%%%%%%%%%%%%%

\section{Introduction}
Certain bijections of the vector space $\F_q^n$ (resp. projective
space $\PN{n-1}(\F_q)$) over a finite field $\F_q$ are naturally important: 
$\GL(n,q)$ and $\GammaL(n,q)$ (resp. $\PGL(n,q)$ and $\PGammaL(n,q)$).
We recall the following definitions.

\begin{definition}
Let $q = p^m$ where $p$ is a prime and $\sigma \in \Aut(\F_q)$.
\begin{enumerate}
\item A $\sigma$-semilinear form 
$\ell : \F_q^n \to \F_q$ is a function that satisfies
\begin{align*}
\ell(c\U x)  &= c^\sigma \ell(\U x) \textrm{ and},\\         
\ell(\U x+\U y) &= \ell(\U x)+\ell(\U y)
\end{align*}
for all $c \in \F_q$ and $\U x,\U y \in \F_q^n$.
It is called a linear form if $\sigma = \id$.
\item A $\sigma$-sesquilinear form 
$b : \F_q^n \times \F_q^n \to \F_q$ is a function 
such that 
$\U x \mapsto b(\U x,\U y)$ is a linear form for all fixed $\U y$, and
$\U y \mapsto b(\U x,\U y)$ is a $\sigma$-semilinear form for all fixed $\U x$.
It is called a bilinear form if $\sigma = \id$.
\item A $\sigma$-semiquadratic form $s : \F_q^n \to \F_q$ is a function 
that satisfies 
\begin{align}
s(c\U x)  &= c^{\sigma+1} s(\U x) \textrm{ and},\label{eq:semi1}\\         
s(\U x+\U y)-s(\U x)-s(\U y) &= b(\U x,\U y) + b(\U y,\U x)\label{eq:semi2}
\end{align}
for all $c \in \F_q$ and $\U x,\U y \in \F_q^n$,
where $b$ is a $\sigma$-sesquilinear form. It is called a quadratic form if $\sigma = \id$.
\end{enumerate}
\end{definition}

\begin{remark}
	A generalization of quadratic forms over division rings that satisfy Eqs.~\eqref{eq:semi1} and~\eqref{eq:semi2} was studied under the name of \emph{pseudo-quadratic forms} by, for instance, Tits~\cite[Chapter 8.2]{tits1974buildings}. However, restrictions are always made such that the corresponding sesquilinear form is reflexive, which for pseudo-quadratic forms over fields immediately implies that the corresponding automorphism $\sigma$ satisfies $\sigma^2=\id$. We do not impose this restriction. 
\end{remark}

The group $\GammaL(n,q)$ (resp. $\GL(n,q)$) 
is the set of bijective semilinear (resp. linear) maps 
$L : \F_q^n \to \F_q^n$ with $L= (L_1,\ldots,L_n)$ 
where $L_i$ are $\sigma$-semilinear (resp. linear) forms 
where $\sigma \in \Aut(\F_q)$ (resp. $\sigma = \id$) is fixed 
for a given $L$. Thus the following definition is natural.

\begin{definition}
The set $\SQH(n,q)$ (resp. $\QH(n,q)$) is the set of
semiquadratic (resp. quadratic) maps
$S : \F_q^n \to \F_q^n$ with $S= (S_1,\ldots,S_n)$ 
where $S_i$ are $\sigma$-semiquadratic (resp. quadratic) forms 
where $\sigma \in \Aut(\F_q)$ (resp. $\sigma = \id$) is fixed 
for a given $S$. 
\end{definition}

\begin{remark}[Matrices and cubes]
It is easy to show that these maps can be written as the following 
matrix operations. For a vector $\U x = (x_1,\ldots,x_n)$,
define $\U x^\sigma = (x_1^\sigma,\ldots,x_n^\sigma)$ and let $\U x^T$ denote
the transpose as usual.
\begin{enumerate}
\item A semilinear map is of the shape $L(\U x) = M\U x^{\sigma T}$
for an $\F_q$-matrix $M_{n \times n}$. Clearly, $L$ is bijective if and only if $M$ is 
full-rank.
\item A semiquadratic map $S = (S_1,\ldots,S_n)$ is of the shape
$S_i(\U x) = \U xN^{(i)}\U x^{\sigma T}$
for $\F_q$-matrices $N^{(i)}_{n \times n}$ and $1 \le i \le n$. Its polarization
\[
	S(\U x+\U y) - S(\U x) - S(\U y) = B(\U x,\U y) + B(\U y,\U x)
\]
where $B = (B_1,\ldots,B_n)$ is the sesquilinear map defined by 
the sesquilinear forms $B_i(\U x,\U y) = \U xN^{(i)}\U y^{\sigma T}$.
\end{enumerate}
\end{remark}

In odd characteristic (i.e., when $q = p^m$ with $p$ an odd prime)  
there are no semiquadratic bijections since
$2 \mid \gcd(\sigma+1,q-1)$ in this case. 
Note that $\sigma \in \{p^0,p^1,\ldots,p^{m-1}\}$
is an integer and $x \mapsto x^\sigma$ is the actual 
exponentiation in the field $\F_{p^m}$.
Thus, the \textit{optimal} 
case corresponds to maps that are two-to-one on
$\F_q^n \setminus \{\U 0\}$ which are interesting combinatorially
as we will explain in the next section.

\subsection{$\GL(n,q)$-equivalence}

Let $F : \F_q^n \to \F_q^n$. The natural group actions 
$L \circ F$ and $F \circ L$ together define a notion of 
equivalence where $L \in \GL(n,q)$. We say that 
$F,G : \F_q^n \to \F_q^n$
are $\GL(n,q)$-equivalent if $G = L_1 \circ F \circ L_2$ for some
$L_1,L_2 \in \GL(n,q)$ and write $F \approx G$. 

\begin{remark}[$\F_{q^n}$-polynomials and change of basis]
Any $F : \F_q^n \to \F_q^n$ can be written uniquely as 
(the evaluation function of) a polynomial
\[
	\widetilde{F}(X) = \sum_{i = 0}^{q^n-1} A_i X^i \in \F_{q^n}[X]/(X^{q^n}-X),
\]
given a basis choice $\beta  = (\beta_1 ,\ldots,\beta_n )$  for $X    = (\beta_1     x_1+\cdots+\beta_n  x_n)$ 
and                  $\gamma = (\gamma_1,\ldots,\gamma_n)$  for $\widetilde{F}(X) = (\gamma_1 F(\U x)_1+\cdots+\gamma_n F(\U x)_n)$.
The notion of $\GL(n,q)$-equivalence is therefore that of basis change.
Thus, we may extend the use of the $\GL(n,q)$-equivalence between 
univariate and multivariate representations and write 
$\widetilde{F}(X) \approx F(x_1,\ldots,x_n)$.
We reserve the capital variable name $X$ for the 
univariate notation and the lower-case variable names
$x,y,z,\ldots$ for the multivariate case.

The following polynomials are the 
$\F_{q^n}[X]/(X^{q^n}-X)$-representations of semilinear 
and semiquadratic maps. 
\begin{enumerate}
\item A semilinear map is a polynomial of the shape
\[
	\widetilde{L}(X) = \sum_{1 \le i \le n} A_i X^{q^i\sigma},
\]
which are known as $\F_q$-linearized polynomials when $\sigma = \id$. 
\item A semiquadratic map is a polynomial of the shape
\[
    \widetilde{S}(X) = \sum_{1 \le i,j \le n} B_{ij} X^{q^i\sigma+q^j},
\]
which are known as Dembowski-Ostrom polynomials when $\sigma = \id$. 
(If $p=2$ and $\sigma=\id$, some authors require $i\neq j$.)

\end{enumerate}
\end{remark}

\subsection{Subfields of $\F_q$} By again choosing arbitrary bases, 
one can embed $\GL(n,q)$    and $\GammaL(n,q)$ in 
$\GL(nk,p^{\ell})$ and $\GammaL(nk,p^{\ell})$ respectively 
if $q = p^{m}$ and $m = k\ell$. If $\Fix_{q}(\sigma) = \F_{p^{\ell}}$ 
is the fixed field of $\sigma \in \Aut(\F_q)$ in $\F_q$ then 
a $\sigma$-semilinear map of $\F_{p^m}^n$ becomes
a linear map of $\F_{p^{\ell}}^{nk}$.
We have the chain of inclusions
\[
\GL(n,q) \le \GammaL(n,q) \le \GL(nm,p).
\]
Similar observations hold for semiquadratic maps:
\[
\QH(n,q) \subseteq \SQH(n,q) \subseteq \QH(nm,p).
\]

\subsection{Semifields} An $n$-dimensional non-associative 
(meaning \textit{not necessarily associative})
$\F_q$-algebra is the vector space $\F_q^n$ equipped with a bilinear 
multiplication $\U x \ast \U y$.  
By an observation of Dembowski and Ostrom \cite[p. 257]{DO}, 
the multiplication operation of a commutative $\F_q$-algebra 
can be written as the polarization
\[
	\U x \ast \U y = Q(\U x, \U y) -Q(\U x)-Q(\U y) 
\]
for a quadratic homogeneous map $Q \in \QH(n,q)$. We say that a 
finite algebra is a division algebra when the condition
$\U x \ast \U y = 0$ if and only if $\U x = \U 0$ or $\U y = \U 0$
holds. 
Instead of a ``finite not-necessarily associative unital division algebra'' 
we use the terminology \textit{finite semifield} introduced by 
Knuth~\cite{Knuth65}, which has become standard. 
Semifields have attracted a lot of interest in finite geometry since they 
can be used to coordinatize certain non-Desarguesian projective planes, 
see Section~\ref{s:planes} for details.
Menichetti proved (the Kaplansky conjecture) \cite{menichetti1977kaplansky} 
that a finite semifield three dimensional over its center
is either a field or a twisted field of Albert 
(see Section \ref{s:isotopy} for details). 
In the commutative case this can be re-stated as follows. 
If the polarization of a quadratic homogeneous map $Q \in \QH(3,q)$
gives a semifield then $Q \approx X^2$ or $Q \approx X^{q+1}$ where 
Dembowski-Ostrom polynomial representations for $Q$ is used
(see Theorem \ref{thm_menichetti}).
Note that an $n$-dimensional $\F_{p^m}$-algebra
is an $nk$-dimensional $\F_{p^{\ell}}$-algebra if $m = k\ell$. 
The polarization of a $\sigma$-semiquadratic map 
$S \in \SQH(n,p^m)$ is thus an $nk$-dimensional $\F_{p^{\ell}}$-algebra if 
$\Fix_{q}(\sigma) = \F_{p^{\ell}}$. Our aim in this paper is to 
identify a large class of finite semifields that are polarizations of 
$S \in \SQH(3,p^m)$ that arise from a non-trivial use of 
the skew-polynomials introduced by Ore \cite{Ore1}.

The main result of our paper is the following theorem.

\begin{theorem} \label{thm_main}
Let $q = p^m$ be odd and $F_1,F_2,F_3 \colon \F_q^3\rightarrow \F_q$ be defined as
\begin{align*}
F_1(x,y,z) &=  x^{\sigma+1} + ay^{\sigma}z + bx^{\sigma}y + cx^{\sigma}z,\\
F_2(x,y,z) &= ay^{\sigma+1} +  z^{\sigma}x + bz^{\sigma}y + cx^{\sigma}y,\\
F_3(x,y,z) &=  z^{\sigma+1} -  x^{\sigma}y,
\end{align*}
where $a,b,c \in \F_q$ and 
$\sigma : x \mapsto x^{p^k}$ with $m/\gcd(k,m)$ odd.

The polarization of $F = (F_1,F_2,F_3) \in \SQH(3,q)$ is a semifield
if and only if the equation	
\begin{equation} \label{eq:condition}
    x^{\sigma^2+\sigma+1} + cx^{\sigma^2+\sigma} + bx^{\sigma^2} + a = 0
\end{equation}

has no solution $x \in \F_q$. 
\end{theorem}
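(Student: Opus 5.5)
The plan is to reduce the division property to a statement about the skew polynomial ring $R=\F_q[T;\sigma]$, where $Tc=c^\sigma T$. The element attached to \eqref{eq:condition} is the cubic $f=T^3+cT^2+bT+a$ (possibly after replacing $a,b,c$ by suitable $\sigma$-twists, or passing to the opposite ring). In $R$ the remainder of $f$ on division by $T-x$ is a polynomial expression of exactly the shape of the left-hand side of \eqref{eq:condition}. So $f$ has a linear right factor $T-x$ if and only if \eqref{eq:condition} has a solution $x$, up to the twist just mentioned. I will first check this identity carefully, fixing whether left or right division, and which twist, yields precisely $x^{\sigma^2+\sigma+1}+cx^{\sigma^2+\sigma}+bx^{\sigma^2}+a$.

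First step: write out the polarization $\U u\ast\U v=B(\U u,\U v)+B(\U v,\U u)$ explicitly, with $\U u=(x_1,y_1,z_1)$ and $\U v=(x_2,y_2,z_2)$. For example, the first coordinate is
\[
x_1x_2^\sigma+x_2x_1^\sigma+a(y_1^\sigma z_2+y_2^\sigma z_1)+b(x_1^\sigma y_2+x_2^\sigma y_1)+c(x_1^\sigma z_2+x_2^\sigma z_1),
\]
and similarly for the other two coordinates. I will then interpret $\F_q^3$ as the space of remainders of $R$ modulo $f$, with basis $1,T,T^2$. The aim is to exhibit $\U u\ast\U v$ as a symmetrization of skew multiplication, in the spirit of Petit's construction $R/Rf$: a map of the form $\U u\ast\U v=\phi(\U u)\,\U v+\phi(\U v)\,\U u \bmod_r f$ for a semilinear twist $\phi$. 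Alternatively, I will check that $\U u\mapsto \U u\ast\U v$ factors as a product of left/right multiplications in $R$ followed by reduction mod $f$. Matching coefficients against $F_1,F_2,F_3$ is routine but must be done exactly.

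Necessity: suppose $x_0$ solves \eqref{eq:condition}. Then $f=g\,(T-x_0)$ with $g$ quadratic. I will use this factorization to write down explicit nonzero $\U u,\U v$ with $\U u\ast\U v=0$. A natural first guess is $\U u$ corresponding to $T-x_0$ or to $g$. Failing that, I will solve the system $\U u\ast\U v=0$ directly under the ansatz $\U u=(x_0^{\tau},1,0)$ or similar. The hypothesis that $m/\gcd(k,m)$ is odd gives $\gcd(p^k+1,q-1)=2$, which lets one extract the needed $(\sigma+1)$-th roots up to sign.

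Sufficiency, which I expect to be the main obstacle: assume $\U u\ast\U v=0$ with $\U u,\U v\neq0$. The map $\U v\mapsto\U u\ast\U v$ is only $\Fix(\sigma)$-linear, not $R$-linear, so the symmetrized product does not immediately yield a zero divisor in $R/Rf$. I would first try to show that $\U u\ast\U v=0$ forces a factorization $hf'=0 \bmod f$ with $h,f'$ of degree at most $2$. Degree counting then forces $f$ to have a linear (right or left) factor. In a degree-$3$ skew polynomial, a linear left factor also implies a linear right factor, via the bound/norm argument. Either way this produces a root of \eqref{eq:condition}.

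If the structural identification does not work cleanly, I will fall back on direct elimination. Split into the cases $z_1=0$ and $z_1\neq0$, normalizing by scaling $\U u$; scaling by $\lambda$ multiplies the product by a $\lambda$-semilinear factor. Then eliminate $x_2$ and $y_2$ using the third coordinate, $z_1z_2^\sigma+z_2z_1^\sigma=x_1^\sigma y_2+x_2^\sigma y_1$. This reduces the system to a single $\sigma$-linear equation in $z_2$ whose solvability is equivalent to \eqref{eq:condition} having a root. Here the oddness of $m/\gcd(k,m)$ is used to rule out the degenerate sign cases.
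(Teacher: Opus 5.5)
Your necessity direction can be completed, but the sufficiency direction (no root of \eqref{eq:condition} implies no zero divisors), which you yourself flag as the main obstacle, is not proved. The skew-polynomial mechanism you sketch cannot prove it as stated.

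Both the identification of $\U u\ast\U v$ with a symmetrized product in $R/Rf$ and the step ``$\U u\ast\U v=0$ forces a factorization $hf'\equiv 0 \bmod f$'' are only announced. Since $\U v\mapsto \U u\ast\U v$ is not $R$-linear, nothing turns a zero divisor of $\ast$ into a factorization of $f$.

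More decisively, your argument only sees $f$ and never uses that $m/\gcd(k,m)$ is odd, yet this hypothesis is indispensable. Let $d=\gcd(k,m)$. Since $(\F_q^\times)^{\sigma-1}=(\F_q^\times)^{p^d-1}$, the element $-1$ is a $(\sigma-1)$-th power iff $(q-1)/(p^d-1)=\sum_{i=0}^{m/d-1}p^{di}$ is even, i.e.\ iff $m/d$ is even. In that case some $\lambda\in\F_q^\times$ satisfies $\lambda^\sigma=-\lambda$, and
\[
\U u\ast(\lambda\U u)=\bigl((1+\lambda)^{\sigma+1}-1-\lambda^{\sigma+1}\bigr)F(\U u)=(\lambda+\lambda^\sigma)F(\U u)=\U 0
\]
for every $\U u$, regardless of whether \eqref{eq:condition} has a root. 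So any valid version of your factorization step must use the parity, and your sketch gives no indication where it would.

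The fallback elimination does not close the gap either. The third coordinate is a single equation and cannot eliminate both $x_2$ and $y_2$. Moreover, the claim that the resulting $\U u$-dependent $\sigma$-linear equation in $z_2$ is solvable iff \eqref{eq:condition} has a root is precisely the content of the theorem.

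The paper sidesteps the bilinear system entirely. By Theorem~\ref{thm_sqh}, for odd $q$ the polarization is a semifield iff $F$ has only the trivial zero and is $2$-to-$1$ on $\F_q^3\setminus\{\U 0\}$ (Weng--Zeng, Kyureghyan--Pott). If $\overline F$ permutes $\PN{2}(\F_q)$, then $F$ is $\gcd(\sigma+1,q-1)$-to-$1$, and this gcd equals $2$ exactly because $m/\gcd(k,m)$ is odd; this is where the parity enters.

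What remains is Theorem~\ref{thm_main_v2}, a pointwise statement:
\begin{enumerate}
\item The points with $F_3=0$, other than $(1,0,0)$ and $(0,1,0)$, are $(x,x^{-\sigma},1)$ with $x\in\F_q^\times$. There $F_1/F_2=x^\sigma$, provided \eqref{eq:condition} has no root.
\item For $F_3\neq 0$ one computes the images $S_Z$ and $S_X$ of the points with $z=0$ and with $x=0$. The paper proves $S_X\cap S_Z=\emptyset$ via Lemma~\ref{lem:auxiliary}, an Ore-adjoint argument on projective polynomials; this is where skew polynomials genuinely appear.
\item For all other $(u,v)$ it solves explicitly for $x^\sigma$.
\end{enumerate}

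Necessity needs no skew factorization. We have $\U u\ast\U u=2F(\U u)$. A root $x_0\neq 0$ of \eqref{eq:condition} gives $F(x_0,x_0^{-\sigma},1)=\U 0$, and if $a=0$ then $F(0,1,0)=\U 0$.
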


It is well known that for any $\sigma$ there exist $a,b,c \in \F_q$ such 
that Eq.~\eqref{eq:condition} has no solution in $\F_q$. For details, we 
refer to Appendix~\ref{sec:lemmas}.

\subsection{Bijections of $\PN{n-1}(\F_q)$}
Define an equivalence relation among vectors 
$\U u,\U v \in \F_q^n \setminus \{\U 0\}$
as $\U u \sim \U v$ if and only if $\U u = \lambda \U v$ 
for some $\lambda \in \F_q^\times$. 
Recall that the projective space $\PN{n-1}(\F_q)$ is defined as the 
equivalence classes of $\F_q^n \setminus \{\U 0\}$ modulo the
equivalence relation $\sim$. Any map $F : \F_q^n \to \F_q^n$,
that satisfies $F(\lambda\U x) = \phi(\lambda)F(\U x)$ for 
$\lambda \in \F_q$ and $\phi : \F_q \to \F_q$, 
and $F(\U x) \ne 0$ for $\U x \in \F_q^n \setminus \{\U 0\}$ induces a 
well-defined map 
$\overline{F}$ on the projective space $\PN{n-1}(\F_q)$. 
The maps of $\GL$, $\GammaL$, $\QH$ and $\SQH$ are
$1$-,$\sigma$-, $2$-, and $(\sigma+1)$-homogeneous respectively, 
that is to say, $F(\lambda \U x) = \lambda^s F(\U x)$ is satisfied
for $s \in \{1,\sigma,2,\sigma+1\}$.

We define projective versions $\PQH(n,q)$ and $\PSQH(n,q)$
of $\QH$ and $\SQH$ analogous to $\PGL(n,q)$ and $\PGammaL(n,q)$
in relation to $\GL$ and $\GammaL$.

\begin{definition}
The projective homogeneous semiquadratic (resp. quadratic) maps
$\overline{F} \in \PSQH(n,q)$ (resp. $\PQH(n,q)$) 
are the maps
$\overline{F} : \PN{n-1}(\F_q) \to \PN{n-1}(\F_q)$
induced by $F \in \SQH(n,q)$ (resp. $\QH(n,q)$) 
that satisfy $F(\U x) = \U 0 \iff \U x = \U 0$.
\end{definition}
Even though the definition is quite natural, we are not aware of any 
previous work done on the set $\PSQH(n,q)$ in full generality. The 
special case of $\PSQH(2,q)$ plays (implicitly) a major role in recent 
classification results of certain permutation polynomials 
\cite{Gologlu22,DingZieve}. In this paper, we are interested in bijections 
in $\PSQH(3,q)$. The aforementioned classification of bijections in $\SQH(2,q)$ 
and $\PSQH(2,q)$ has only recently been done. We only give the $\PSQH$ case 
which implies the $\SQH$ result.

\begin{theorem}[Classification of bijections in $\PSQH(2,q)$ \cite{Gologlu22,DingZieve}]
Let $\sigma = p^k$, $q = p^n$, $d = \gcd(k,n)$ and
$\overline{F} \in \PSQH(2,q) \setminus \PQH(2,q)$ 
be a $\sigma$-semiquadratic map. 
Then $\overline{F}$ is bijective
if and only if
$p = 2$, and 
\begin{enumerate}
\item $n/d$ is even and $F \approx X \mapsto X^{\sigma+q}$, or 
\item $n/d$ is even and $F \approx X \mapsto X^{\sigma+1}$, or
\item $n/d$ is odd and $F \approx (x,y) \mapsto (x^{\sigma+1},y^{\sigma+1})$, or
\item $n/d$ is odd and 
\begin{enumerate}
\item $F \approx X \mapsto X^{\sigma+q}$ if $k/d$ is odd, or 
\item $F \approx X \mapsto X^{\sigma+1}$ if $k/d$ is even.
\end{enumerate}
\end{enumerate}

\end{theorem}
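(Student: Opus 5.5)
Since this theorem is quoted from \cite{Gologlu22,DingZieve}, we only sketch how we would prove it. The plan is to turn bijectivity of $\overline F$ into a statement about roots of projective polynomials $X^{\sigma+1}+\alpha X+\beta$, and then apply Bluher's classification of the possible numbers of roots of such polynomials.

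\textbf{Setup.} Identify $\F_q^2$ with $\F_{q^2}$. By the remark on $\F_{q^n}$-polynomials, a $\sigma$-semiquadratic map is then
\[
F(X)=A X^{\sigma+1}+B X^{\sigma+q}+C X^{q\sigma+1}+D X^{q\sigma+q}.
\]
Three facts about $\overline F$ are available at once.
\begin{enumerate}[(i)]
\item $\overline F$ is injective on $\PN1(\F_q)$ exactly when, for $u,v$ with $u\not\sim v$, the relation $F(u)=\lambda F(v)$ has no solution with $\lambda\in\F_q^\times$.
\item In affine coordinates $(x,1)$ this becomes a statement about $F_1(x,1)/F_2(x,1)$, a rational function over $\F_q$ of degree at most $\sigma+1$. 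So we need this rational function to permute $\PN1(\F_q)$.
\item Odd characteristic is excluded at once. Since $2\mid\gcd(\sigma+1,q-1)$, the map $F$ is at least two-to-one on $\F_q^2\setminus\{0\}$. Comparing the number of nonzero values with $|\F_q^\times|\cdot|\PN1(\F_q)|$ shows that $\overline F$ cannot be surjective. (This needs a short count of image classes, which is the same count that forbids $\SQH$-bijections.)
\end{enumerate}
So assume $p=2$ from now on.

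\textbf{Reduction to projective polynomials.} Fix $v$ and write $u=xv+w$ for a complementary vector $w$. The condition $F(u)=\lambda F(v)$, with the coordinates of $F(v)$ eliminated, becomes the following: after an $\F_q$-linear change of variable determined by $v$, a certain polynomial $x^{\sigma+1}+\alpha(v)x+\beta(v)$ has at most one root $x\in\F_q$ (or none, depending on normalisation). Here the coefficients $\alpha(v),\beta(v)$ come from the sesquilinear polarization $B$.

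Bluher's theorem says such a polynomial has $0$, $1$, $2$ or $2^d+1$ roots in $\F_q$, and it determines which counts occur according to the parity of $n/d$. Bijectivity therefore becomes the requirement that, for every $v$, the projective polynomial attached to $v$ has no root, or has exactly one. This is uniform in $v$.

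\textbf{Normal forms.} Act on $F$ by $\GL(2,q)$ on both sides to normalise the $2\times 2$ matrices $N^{(1)},N^{(2)}$. Then impose the ``at most one root for all $v$'' condition to force most coefficients to vanish. We expect three outcomes:
\begin{itemize}
\item $F$ collapses to a monomial $X^{\sigma+1}$ or $X^{\sigma+q}$ over $\F_{q^2}$;
\item $F$ splits as the diagonal map $(x^{\sigma+1},y^{\sigma+1})$;
\item $F$ is degenerate, so that $F(u)=0$ for some $u\neq0$, which is excluded by the definition of $\PSQH$.
\end{itemize}
Finally, decide when each monomial actually induces a bijection of $\PN1(\F_q)$. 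For $X^{\sigma+1}$ and $X^{\sigma+q}$ this is the condition $\gcd(\sigma+1,q+1)=1$, respectively $\gcd(\sigma-1,q+1)=1$ after composing with Frobenius. For the diagonal form it is $\gcd(\sigma+1,q-1)=1$. These gcd conditions translate into the parity conditions on $n/d$ and $k/d$ in the statement.

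\textbf{Main obstacle.} We expect the hardest step to be the normal-form analysis. The difficulty is showing that the ``at most one root for every $v$'' condition leaves only these shapes, without appealing to Weil-type bounds, which fail because $\sigma$ may be as large as $q^{1/2}$. Our first attempt would be to parametrise the coefficients $\alpha(v),\beta(v)$ as semiquadratic functions of $v$. We would then find a $v$ for which the Bluher invariant, namely the class of $\alpha^{\sigma+1}/\beta^{\sigma}$ modulo the relevant image set, forces two or $2^d+1$ roots unless the off-diagonal coefficients vanish. This is essentially the strategy of \cite{Gologlu22}, whereas Ding--Zieve instead go through the geometry of the associated curves.
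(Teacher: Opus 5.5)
The paper gives no proof of this statement; it is quoted from \cite{Gologlu22,DingZieve}. Your sketch therefore has to stand on its own, and it has two genuine gaps.

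The ``if'' direction is fine. On the cyclic group $\F_{q^2}^\times/\F_q^\times$ of order $q+1$ one has $x^q=x^{-1}$, so $X^{\sigma+1}$ and $X^{\sigma+q}$ act as $x\mapsto x^{\sigma+1}$ and $x\mapsto x^{\sigma-1}$. The diagonal map acts as $x\mapsto x^{\sigma+1}$ on $\F_q\cup\{\infty\}$. For $p=2$ your three gcd conditions then give exactly (i)--(iv).

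The first gap is your exclusion of odd characteristic. The count uses only that $F$ is at least two-to-one on $\F_q^2\setminus\{0\}$. That also holds for $p=2$ with $n/d$ even, where $\gcd(\sigma+1,q-1)=2^d+1$, and there $X^{\sigma+1}$ \emph{does} induce a bijection. For example, with $q=4$ and $\sigma=2$, the map $X^3$ is $3$-to-$1$ on $\F_{16}^\times$ but bijective on $\F_{16}^\times/\F_4^\times$. The error is that $\im(F)\setminus\{0\}$ is stable only under $(\F_q^\times)^{\sigma+1}$, not under all of $\F_q^\times$. A projective bijection is exactly $\gcd(\sigma+1,q-1)$-to-$1$ (Theorem~\ref{thm_sqh}(iii)), and its image meets every line through $0$ in $(q-1)/\gcd(\sigma+1,q-1)$ points, so no count of this kind contradicts surjectivity. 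When $n/d$ is odd you can repair this with the paper's tools. Then $F$ is $2$-to-$1$ by Theorem~\ref{thm_sqh}(iii) and Lemma~\ref{lem_gcd}, so its polarization is a semifield by Theorem~\ref{thm_sqh}(i), and part (ii)(b), with the even dimension $2$, makes $\overline F$ two-to-one. When $n/d$ is even (gcd $p^d+1$) you still need a separate argument.

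The second, and main, gap is that the ``only if'' direction is not proved. ``We expect three outcomes'' is the content of the theorem, not a step toward it. The clean reformulation is that every member $F_1-tF_2$, $t\in\F_q\cup\{\infty\}$, of the pencil has exactly one zero in $\PN{1}(\F_q)$. Equivalently, each member has at most one zero, since every point is a zero of exactly one member.

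Your ``fix $v$'' parametrisation does not produce $x^{\sigma+1}+\alpha x+\beta$. If $G$ is the member vanishing at $v$ and $B$ is its sesquilinear form, then $G(xv+w)=B(w,v)x^{\sigma}+B(v,w)x+G(w)$. This is an affine $\sigma$-linearised polynomial, because its $x^{\sigma+1}$-coefficient is $G(v)=0$. Projective polynomials arise when you parametrise by $t$ instead: set $y=1$ and translate away the $x^\sigma$ term. The scaling invariant $b^{\sigma}/a^{\sigma+1}$ of the resulting $x^{\sigma+1}+ax+b$ is then a rational function of $t$.

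Bluher's list of possible root numbers and their frequencies does not by itself rule anything out for such a family. What is missing is an actual argument that ``exactly one projective zero for all $q+1$ values of $t$'' forces the pair $(N^{(1)},N^{(2)})$, up to $\GL(2,q)$ acting on both sides, into the shape of $X^{\sigma+1}$, $X^{\sigma+q}$ or $(x^{\sigma+1},y^{\sigma+1})$. That analysis is the substance of \cite{Gologlu22,DingZieve}, and your proposal leaves it as an expectation.
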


The case $\overline{F} \in \PQH(2,q)$ is immediate: 
$F(X) \approx X^2$ and $p = 2$ (see \cite[Lemma 1.2]{DingZieve22}).

The classification tells us that nothing surprising happens 
in dimension two: Bijections are either ``monomial'' 
in the Dembowski-Ostrom representation or they are
``degenerate'' (which we will detail in the next section). 
In this paper, we will give non-degenerate bijections
in $\PSQH(3,q)$ that are inequivalent to Dembowski-Ostrom monomials.
They are natural but non-trivial. We will prove the 
following theorem which is the projective version 
of Theorem \ref{thm_main}. In fact, Theorem \ref{thm_main}
follows from Theorem \ref{thm_main_v2}.
We repeat the statement
because the $\gcd$-condition is no longer needed 
and there is no restriction on the characteristic.

\begin{theorem} \label{thm_main_v2}
Let $q = p^m$ and $F_1,F_2,F_3 \colon \F_q^3\rightarrow \F_q$ be defined as
\begin{align*}
F_1(x,y,z) &=  x^{\sigma+1} + ay^{\sigma}z + bx^{\sigma}y + cx^{\sigma}z,\\
F_2(x,y,z) &= ay^{\sigma+1} +  z^{\sigma}x + bz^{\sigma}y + cx^{\sigma}y,\\
F_3(x,y,z) &=  z^{\sigma+1} -  x^{\sigma}y,
\end{align*}
where $a,b,c \in \F_q$ and 
$\sigma=p^k$.

Then $\overline{F} = (\overline {F_1,F_2,F_3})\in \PSQH(3,q)$ is bijective 
if and only if the equation	
\[
	x^{\sigma^2+\sigma+1} + cx^{\sigma^2+\sigma} + bx^{\sigma^2} + a = 0
\]
has no solution $x \in \F_q$. 
\end{theorem}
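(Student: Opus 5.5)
The plan is to read $F$ as a multiplicative, norm-like map in a quotient of the skew polynomial ring $R=\F_q[T;\sigma]$, where $Tc=c^\sigma T$. Put $f(T)=T^3-cT^2-bT-a$, with signs fixed by the computation below, and identify $\U u=(x,y,z)$ with the class of $x+yT+zT^2$ in $R/Rf$. The displayed equation is exactly the right-root condition for a cubic skew polynomial. Indeed, a linear right factor $T-\alpha$ of $f$ exists iff $N_3(\alpha)-cN_2(\alpha)-bN_1(\alpha)-a=0$, where $N_i(\alpha)=\alpha^{\sigma^{i-1}+\cdots+\sigma+1}$. After the substitution $x\mapsto -x$, or a suitable rescaling, this is the stated equation $x^{\sigma^2+\sigma+1}+cx^{\sigma^2+\sigma}+bx^{\sigma^2}+a=0$.

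\textbf{First step.} I will verify by direct expansion that $F_1,F_2,F_3$ are the coordinates of a product $\U u\cdot\U u'$ reduced modulo $f$. Here $\U u'$ is a $\sigma$-twisted partner of $\U u$, for instance the image of $\U u$ under $(x,y,z)\mapsto(x^\sigma,\dots)$ followed by a fixed cyclic reordering of coordinates. The conjugation-like shape of the three forms, where $F_3$ pairs $z$ with $z$ and $x$ with $y$ while $F_1,F_2$ carry the coefficients $a,b,c$, suggests this identity. The aim is to arrange it so that
\[
F(\U u)=\U u\star\U u^{\sigma}
\]
for a multiplication $\star$ of Petit/Jha--Johnson type built from $f$. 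That multiplication is bilinear in the first argument and $\sigma$-semilinear in the second.

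\textbf{Necessity.} Suppose the equation has a root $x_0\in\F_q$. Then $f$ has a right factor $T-\alpha$, and from it I will write down an explicit nonzero $\U u$ with $F(\U u)=\U 0$. The natural candidate is to take $\U u$ proportional to $(1,\alpha',\alpha'')$, with $\alpha',\alpha''$ suitable $\sigma$-norms of $\alpha$, and to check that each $F_i$ vanishes by substitution. Such a $\U u$ violates the defining condition of $\PSQH$, so $\overline F$ is not a bijection. This step should be a routine verification.

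\textbf{Sufficiency.} Assume the equation has no root. Since $\PN2(\F_q)$ is finite, it suffices to show that $F(\U u)=\U 0$ only for $\U u=\U 0$, and that $F(\U u)=\lambda F(\U v)$ implies $\U u\sim\U v$.
\begin{enumerate}
\item A degree-$3$ skew polynomial without a linear right factor still needs a degree-$2$ right factor excluded. For this I will pass to the adjoint or reciprocal polynomial, whose right roots correspond to left roots of $f$. Its right-root equation should again be equivalent, through a change $x\mapsto x^{-1}$ or $x\mapsto x^{\sigma^{-1}}$, to the stated equation. So $f$ is irreducible and $R/Rf$ has no zero divisors.
\item Zero-freeness of $F$ then follows from the product form: $\U u\star\U u^\sigma=0$ forces $\U u=\U 0$.
\item For injectivity on points I will use a norm-type identity. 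The aim is to show that $F(\U u)\sim F(\U v)$ makes $\U u\star\U v^{-1}$ land in a subset on which $F$ is scalar, and that this subset is $\F_q^\times\cdot 1$. That is precisely the statement $\U u\sim\U v$.
\end{enumerate}

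\textbf{Main obstacle.} I expect the real difficulty in the injectivity step (3). $F$ is not linear, and $\star$ is not associative, so the multiplicativity it needs is only available up to the twist. If the algebraic route does not close, the fallback is elimination. Assume $F(\U u)=\lambda F(\U v)$ and scale so that $v_1=1$ or $v_3=1$. Then use the $F_3$ equation to solve for $y$, substitute into $F_1$ and $F_2$, and reduce to a single equation in a ratio $t$. The aim is to show this equation factors as the stated cubic-norm expression times a factor that forces $t\in\F_q$ and $\U u=t\U v$.
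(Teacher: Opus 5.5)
There is a genuine gap. Step (3), injectivity on points, carries essentially the entire content of the theorem, and the proposal gives no argument for it.

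\textbf{What the algebra can and cannot give.} The skew-polynomial framework can at best show that a suitable cubic in $\F_q[T;\sigma]$ is irreducible. Ruling out a linear left factor uses Ore's adjoint lemma, Lemma~\ref{lem_ore_2}; that lemma is an equality of root counts, not a change of variables. Irreducibility makes $R/Rf$ free of zero divisors. Even granting your step one, this yields only $F(\U u)\neq\U 0$ for $\U u\neq\U 0$, i.e.\ condition~\eqref{eq_PP1}, which is the easy part.

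No division property implies injectivity on points. In the field $\F_{q^3}$ the map $X\mapsto X\cdot X^{\sigma}$ is zero-free. Yet it induces a bijection of $\PN{2}(\F_q)$ only when $\gcd(\sigma+1,q^2+q+1)=1$, which fails for $q=4$, $\sigma=2$.

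In odd characteristic with $m/\gcd(k,m)$ odd, Theorem~\ref{thm_sqh} makes bijectivity of $\overline F$ equivalent to planarity of $F$. If $F(\U u)=\phi(\U u)\star\U u$, planarity says the symmetrized product $\phi(\U u)\star\U v+\phi(\U v)\star\U u$ has no zero divisors. That does not follow from $\star$ being a division product. Your ``norm-type identity'' would need $F$ multiplicative and $\star$ associative, so that $\U u\star\U v^{-1}$ behaves; neither holds. The elimination fallback is only a hoped-for factorization.

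\textbf{Step one and necessity.} Step one is not routine as written. In $R/Rf$ the $T$- and $T^2$-coefficients of the left factor twist the right factor by $\sigma$ and $\sigma^2$. With $\U u^{\sigma}$ on the right you therefore get $\sigma^2$- and $\sigma^3$-twisted terms, so the product is not $\sigma$-semiquadratic. Also, $N_3(\alpha)-cN_2(\alpha)-bN_1(\alpha)-a$ has exponents $\sigma^2+\sigma+1,\ \sigma+1,\ 1$. No substitution $x\mapsto\lambda x$ turns it into the stated polynomial; the two are linked only by reciprocation plus Ore's lemma.

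For necessity none of this is needed. A root $x_0\neq 0$ gives $F(x_0,x_0^{-\sigma},1)=\U 0$ directly. When $a=0$ the zero is $(0,1,0)$, which is not of your form $(1,\alpha',\alpha'')$.

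\textbf{The missing idea.} Prove surjectivity instead; finiteness then gives bijectivity. This involves one unknown point rather than two points and a scalar.

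\emph{Points with $F_3=0$.} These are the points $(x,x^{-\sigma},1)$, where $F_1=\Pi(x)/x^{\sigma^2}$ and $F_2=\Pi(x)/x^{\sigma^2+\sigma}$, with $\Pi$ the stated polynomial. So $F_1/F_2=x^{\sigma}$ sweeps out $\F_q^\times$.

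\emph{Targets $(u,v,1)$.} Preimages with $z=0$ and with $x=0$ form explicit sets $S_Z$ and $S_X$. For $x\neq 0$, $z=1$, substitute $t=yx^{\sigma}$. Suitable combinations of the two equations give one relation linear in $t$ and one linear in $t^{\sigma}$; the line $u=-b$ is treated separately. Eliminating $t$ gives $x^{\sigma}$ as an explicit quotient. Off that line, its numerator vanishes exactly on $S_X\cup\{(0,0)\}$ and its denominator exactly on $S_Z$.

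Skew-polynomial theory enters only in showing $S_X\cap S_Z=\emptyset$. That reduces to a second projective polynomial having no roots, handled by Ore's lemma via Lemma~\ref{lem:auxiliary}.
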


The following theorem summarizes the connection between 
semifields and $s$-to-one maps in $\SQH$ and $\PSQH$, and provides the link between Theorems~\ref{thm_main} and~\ref{thm_main_v2}.

\begin{theorem}\label{thm_sqh}
Let $q$ be odd, $F \in \SQH(n,q)$ 
with companion automorphism $\sigma \in \Aut(\F_q)$.
\begin{enumerate}
\item The polarization of $F$ is a (commutative) 
semifield if and only if 
\begin{enumerate}
\item $F(\U x) = \U 0 \iff \U x = \U 0$, and 
\item $F$ is $2$-to-$1$ on $\F_q^n \setminus \{\U 0\}$.
\end{enumerate}
\item  If (a) and (b) in (i) hold then
\begin{enumerate}
\item if $n$ is odd:  $\overline{F}$ is a bijective on $\PN{n-1}(\F_q)$, and
\item if $n$ is even: $\overline{F}$ is $2$-to-$1$ on $\PN{n-1}(\F_q)$.
\end{enumerate}
\item If $\overline{F}$ is bijective on $\PN{n-1}(\F_q)$ then $F$ is 
$\gcd(\sigma+1,q-1)$-to-$1$ on $\F_q^n \setminus \{\U 0\}$.
\end{enumerate}
\end{theorem}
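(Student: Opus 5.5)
The plan is to derive all three parts from two elementary facts. First, $F(-\U x)=(-1)^{\sigma+1}F(\U x)=F(\U x)$ because $\sigma=p^k$ is odd. Second, since $q$ is odd, $2$ is invertible in $\F_q$. Write $\U x\ast\U y=B(\U x,\U y)+B(\U y,\U x)$ for the polarization. By \eqref{eq:semi2} and $F(-\U y)=F(\U y)$,
\[
F(\U x+\U y)-F(\U x-\U y)=(\U x\ast\U y)-(\U x\ast(-\U y))=2\,(\U x\ast\U y),
\]
using that $\ast$ is additive in each argument. The multiplication is bilinear over $\Fix_q(\sigma)$ and commutative. So it is a (pre)semifield exactly when it has no zero divisors. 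For the unital requirement, I would invoke Kaplansky's trick: a commutative presemifield is isotopic to a commutative semifield, so "semifield" means "division" up to isotopy.

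For (i), the key is the change of variables $\U u=\U x+\U y$, $\U v=\U x-\U y$, which is invertible since $2\neq 0$. Then $\U x,\U y\neq \U 0$ corresponds to $\U u\neq\pm\U v$. Hence the algebra is division if and only if $F(\U u)=F(\U v)$ implies $\U u=\pm\U v$. Combined with $F(\U u)=F(-\U u)$, this says the fibres of $F$ are exactly $\{\U 0\}$ and the pairs $\{\U u,-\U u\}$ with $\U u\neq\U 0$, and $\U u\neq-\U u$ for $\U u\ne\U 0$. That is precisely conditions (a) and (b), and each step reverses.

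Part (iii) is direct. Suppose $\overline F$ is bijective and $F(\U y)=F(\U x)$ with $\U x\neq\U 0$. Then $\overline F(\overline{\U y})=\overline F(\overline{\U x})$, so $\U y=\lambda\U x$. This gives $\lambda^{\sigma+1}F(\U x)=F(\U x)$ with $F(\U x)\ne\U 0$, and bijectivity forces $F(\U x)\ne \U 0$. So the fibre is $\{\lambda\U x:\lambda^{\sigma+1}=1\}$, which has exactly $g:=\gcd(\sigma+1,q-1)$ elements.

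For (ii), assume (a) and (b), and fix $\U x\neq\U 0$. The projective fibre of $\overline F(\overline{\U x})$ consists of the classes of those $\U y$ with $F(\U y)\in\F_q^\times F(\U x)$. Let $H=\{\lambda^{\sigma+1}\}\le\F_q^\times$, of index $g$. The set $\operatorname{Im}F\cap\F_q^\times F(\U x)$ is a union of $t$ cosets $hH\,F(\U x)$, with $1\le t\le g$. Each of these values has exactly two preimages. Hence the projective fibre has size $2t(q-1)/g\,/(q-1)=2t/g$, and in particular every fibre has size $1$ or $2$.

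The main obstacle is showing this size is constant, namely $1$ when $n$ is odd and $2$ when $n$ is even. My first attempt would compute $g$. It equals $2$ when $m/\gcd(k,m)$ is odd and $p^{\gcd(k,m)}+1$ otherwise, so the fibre size is $t$ in the first case. I would then count: with $r$ fibres of size $1$ and $s$ of size $2$, we have $r+2s=(q^n-1)/(q-1)\equiv n\pmod 2$. To get uniformity, I would try a character argument. Apply the quadratic character of $\F_q^\times$ (composed with a suitable coordinate or norm map) to the image. The aim is to show that $\operatorname{Im}F$ meets a line in one $H$-coset or in two, according to whether $-1$-type scalars are squares, uniformly in $\U x$. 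If this fails, I would fall back to exploiting the semifield structure, $\U x\ast\U x=2F(\U x)$, together with the known behaviour of squaring maps in commutative semifields of odd or even dimension over the nucleus.
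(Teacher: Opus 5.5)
Your parts (i) and (iii) are fine. For (i) you give a self-contained argument via $F(\U x+\U y)-F(\U x-\U y)=2(\U x\ast\U y)$ and the substitution $\U u=\U x+\U y$, $\U v=\U x-\U y$, whereas the paper only cites Weng--Zeng and Kyureghyan--Pott. Part (iii) is the paper's argument.

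Part (ii), however, is not proved. Your reduction is correct, once you also note that the $g$ distinct vectors $\lambda\U x$ with $\lambda^{\sigma+1}=1$ lie in one fibre of $F$. Then (b) forces $g=2$, and $H$ is the group of squares. So the fibre of $\overline F(\overline{\U x})$ has size $t\in\{1,2\}$, and $t=2$ exactly when $\alpha F(\U x)\in\im(F)$ for a non-square $\alpha$. But (ii) \emph{is} the claim that $t=1$ for every $\U x$ when $n$ is odd and $t=2$ for every $\U x$ when $n$ is even, and nothing in your proposal establishes it. The count $r+2s\equiv n\pmod 2$ only gives $r\equiv n\pmod 2$ and cannot rule out mixed fibre sizes. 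A multiplicative quadratic character composed with a coordinate or norm map is not the right tool. The fallback to ``known behaviour of squaring maps'' simply assumes the fact to be proved.

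The missing ingredient is the dichotomy of Proposition~\ref{prop_wengzeng}: for a non-square $\alpha\in\F_q^\times$, $\alpha\im(F)\cap\im(F)=\{\U 0\}$ if $n$ is odd, and $\alpha\im(F)=\im(F)$ if $n$ is even. One way to prove it uses additive characters.

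Let $\F_{p^d}=\Fix_q(\sigma)$; note $m/d$ is odd because $g=2$. Put $N=nm/d$, let $\eta$ be the quadratic character of $\F_{p^d}$ and $G$ the quadratic Gauss sum. For each nonzero $\F_{p^d}$-linear functional $\ell$, the map $\ell\circ F$ is a quadratic form over $\F_{p^d}$ in $N$ variables. It is non-degenerate because $\U y\mapsto\U x\ast\U y$ is bijective for $\U x\neq\U 0$. Hence its exponential sum is $\eta(\Delta_\ell)G^N$, with $\Delta_\ell$ its discriminant, and this sum is multiplied by $\eta(\epsilon)^N$ when $\ell$ is replaced by $\epsilon\ell$, for $\epsilon\in\F_{p^d}^\times$. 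Fourier inversion of $\U w\mapsto|F^{-1}(\U w)|$ then gives
\[
|F^{-1}(\epsilon\U w)|-1=\eta(\epsilon)^N\bigl(|F^{-1}(\U w)|-1\bigr).
\]
Since $m/d$ is odd, three things follow: $N\equiv n\pmod 2$; a non-square of $\F_{p^d}$ stays a non-square in $\F_q$; and every $\alpha\in\F_q^\times$ can be written as $\epsilon\lambda$ with $\epsilon\in\F_{p^d}^\times$ and $\lambda\in H$. Together these give $t=1$ for all $\U x$ when $n$ is odd and $t=2$ for all $\U x$ when $n$ is even.

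The paper obtains the same dichotomy by applying Weng--Zeng's Proposition 3.6 to $F$ viewed in $\QH(N,p^d)$, combined with the same decomposition $\alpha=\epsilon\lambda$.
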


The first part connecting commutative semifields and 
$2$-to-$1$ functions was proven by Weng and Zeng~\cite{weng2012further}, 
and Kyureghyan and Pott~\cite{KyuPott},
the second and third parts establishing the connection between $2$-to-$1$ functions 
in $\SQH(n,q)$ and bijections of the projective space will be proven in 
Section~\ref{sec:proof_bijections}. Together, they give (in odd characteristic 
and for odd $n$) the link between bijections of the projective space and commutative 
semifields. In particular, Theorem~\ref{thm_sqh} immediately proves 
Theorem~\ref{thm_main} from Theorem~\ref{thm_main_v2}. 

{The bijections of the projective plane $\P^2(\F_q)$ constructed in Theorem~\ref{thm_main_v2} thus yield a family of semifields, which can then again be used to coordinatize a \emph{non-Desarguesian} projective plane (of the same order). Recall that the projective semilinear group $\PGammaL(3,q)$ is the collineation group of the projective plane $\P^{2}(\F_q)$, i.e., its elements are exactly the bijections of points and lines that preserve the point-line incidence of $\P^{2}(\F_q)$. What our results show is that the bijections of $\P^{2}(\F_q)$ in $\PSQH(3,q)$ create (via their polarization) new point-line incidence structures that are also projective planes which are \emph{non-isomorphic} to the Desarguesian plane. These connections, which are here developed for the first time, motivate the title of this paper. We are not aware of any previous work that studied the maps $\PSQH(n,q)$ in this context (or at all).}

\subsection{Structure of the paper}
In Section \ref{sec_prelim} we give more background for 
Section \ref{sec_main} where we prove our main result 
 Theorem \ref{thm_main_v2}.
Then we proceed to show that the family of bijections/semifields 
are indeed new by showing extended inequivalence/non-isotopy results
(Section \ref{sec_ineq}). In the appendix we compute the nuclei of
the semifields (which is an important invariant that measures
the level of non-associativity of an algebra) and also prove necessary
lemmas which are required in our theorems.

%%%%%%%%%%%%%%%%%%%%%%%%%%%%%%%%%%%%%%%%%%%%%%%%%%%%%%%%%%%%%%%%%%%%%%
\section{Preliminaries}\label{sec_prelim}

\subsection{Finite semifields}
A finite \emph{semifield} $\S = (S,+,\circ)$ is a  set $S$ equipped with two operations $(+,\circ)$
satisfying the following axioms. 
\begin{enumerate}
\item[(S1)] $(S,+)$ is a finite group.
\item[(S2)] For all $x,y,z \in S$,
\begin{enumerate}
\item $x\circ (y+z) = x \circ y + x \circ z$,
\item $(x+y)\circ z = x \circ z + y \circ z$.
\end{enumerate}
\item[(S3)] For all $x,y \in S$, $x \circ y = 0$ implies $x=0$ or $y=0$.
\item[(S4)] There exists $\epsilon \in S$ such that $x\circ \epsilon = x = \epsilon \circ x$.
\end{enumerate}

Thus, a finite semifield is a finite non-associative unital division algebra.
This paper is only concerned with finite semifields 
and we omit the word ``finite'' from now on.
An algebraic object satisfying the first three of the above axioms is called 
a pre-semifield. If $\P = (P,+,\circ)$ is a pre-semifield, then $(P,+)$ is 
an elementary abelian $p$-group \cite[p. 185]{Knuth65}, and $(P,+)$ can be viewed as
an $n$-dimensional $\f{p}$-vector space $\F_p^n$. Defining a (pre-)semifield thus boils 
down to defining a bilinear (pre-)semifield multiplication on  $\F_p^n$.
If $\circ$ is associative then $\S$ is the finite field $\F{p^n}$ by 
Wedderburn's (little) theorem.

A pre-semifield $\P = (\F_p^n,+,\circ)$ 
can be converted to a semifield $\S = (\F_p^n,+,\ast)$, for example using {\em Kaplansky's trick} 
by defining the new multiplication as
\[
	(x \circ e) \ast (e \circ y) = (x \circ y),
\]
for any non-zero element $e \in \F_p^n$, making $(e \circ e)$ the multiplicative 
identity of $\S$.

Two pre-semifields $\P_1 = (\F_p^n,+,\circ_1)$ and $\P_2 = (\F_p^n,+,\circ_2)$ 
are said to be \emph{isotopic} if there exist bijections $L,M,N \in \GL(n,p)$ 
satisfying
\[
N(x \circ_1 y) = L(x) \circ_2 M(y).
\]
Such a triple $\gamma = (N,L,M)$ is called an \emph{isotopism} between $\P_1$ and $\P_2$. Clearly, a pre-semifield $\P$ and its corresponding semifield $\S$ constructed by Kaplansky's trick are isotopic. 
Isotopisms between a semifield $\S$ and itself are called \emph{autotopisms} and the set of autotopisms forms the \emph{autotopism group} of the pre-semifield $\P$, denoted by $\Autt(\P)$. 

\subsection{Projective planes} \label{s:planes}
We define the (Desarguesian) projective plane $\PN{2}(\F_q)$ via the equivalence 
relation on $\F_q^3 \setminus \{(0,0,0)\}$ defined by 
$(x,y,z) \sim (\lambda x,\lambda y,\lambda z)$ for $\lambda \in \F_q^\times$. 
We can thus identify $\PN{2}(\F_q)$ with the following set of homogeneous 
coordinates: 
\[
	\PN{2}(\F_q)=
	\left\{ (x,y,1) \colon x,y \in \F_q \right\} 
	\cup 
	\left\{ (x,1,0) \colon x \in \F_q \right\} 
	\cup 
	\left\{ (1,0,0) \right\}.
\]

As usual, 

\begin{enumerate}
\item the points $(x,y,1)$ will be called the \emph{affine points} of $\PN{2}(\F_q)$, and
\item $(1,0,0)$ the \emph{point at infinity}. 
\end{enumerate}

The $q^2+q+1$ lines of $\PN{2}(\F_q)$ are 
\begin{enumerate}
\item $l_\infty=\left\{(x,1,0) \colon x \in \F_q\right\} \cup \left\{(1,0,0)\right\}$ (the \emph{line at infinity}),
\item $l_{a,b}=\{(ax+b,x,1) \colon x \in \F_q\} \cup \{(a,1,0)\}$, and 
\item $l_a=\{(y,a,1) \colon y \in \F_q\} \cup \{(1,0,0)\}$ for $a,b \in \F_q$.
\end{enumerate}

Every pre-semifield can be used to construct a projective plane. This is done 
almost identically to the construction of the Desarguesian plane, with the only 
difference that the lines $l_{a,b}$ use the semifield multiplication instead 
of the finite field multiplication, i.e., the lines are 
\[
l_{a,b}=\{(a \circ x +b ,x,1) \colon x \in \S\}. 
\]

Two pre-semifields define isomorphic projective planes if and only if they are 
isotopic~\cite{Albert60}. The pre-semifields constructed in Theorem~\ref{thm_main} 
from bijections of the Desarguesian plane $\P^2(\F_q)$ thus yield new 
(non-Desarguesian) projective planes using our inequivalence/non-isotopy
results of Section \ref{sec_ineq}.

\subsection{Commutative semifields} \label{subs:comm_semifields}
Not too many families of semifields are known, this is in particular the case 
for commutative semifields. The following is the list of all known 
infinite families of commutative semifields that works for  
orders $q^3$ where $q$ is an odd prime power for arbitrary characteristic.
\begin{enumerate}
\item Finite fields (every $q$).
\item The twisted fields of Albert (every $q$ except $q = p^{2^k}$ for some $k \in \N$). 
\item The Bierbrauer (BB) and Zha-Kyureghyan-Wang (ZKW) semifields
(more restrictive on $q$, see Section \ref{sec_ineq}).
\end{enumerate}
For a full list of known infinite families of commutative 
semifields in odd characteristic, 
see \cite[Tables 1 and 2]{golouglu2023exponential}.

\subsection{Polarization} 
The \emph{polarization} of a map  
$F \colon \F_q^n \rightarrow \F_q^n$ with $F(0)=0$ is defined as
\[
\Delta_F(x,y) = F(x+y) -F(x) -F(y). 
\]
If $F \in \QH(n,q)$ then $\Delta_F : \F_q^n \times \F_q^n \to \F_q^n$ 
is symmetric and $\f{q}$-bilinear. Thus if $\Delta_F(x,a) = 0$ implies $x = 0$ for all 
$a \in \F_{q^n}^\times$, then $\Delta_F(x,y)$ describes a commutative pre-semifield 
multiplication $x \circ y :=\Delta_F(x,y)$, cf. \cite{DO}.
If this happens, we call $F$ a \emph{planar} or \emph{perfectly nonlinear} function.
In characteristic two, this cannot happen since we have $\Delta_F(x,x) = 0$ for all $x$.
Conversely, every commutative pre-semifield multiplication in odd characteristic 
can be written as $\Delta_F(x,y)$ for some $F \in \QH(n,q)$.

\subsection{Bijections of the projective plane}

A homogeneous map $F$ of $\F_q^3$ 
that satisfies $F(x,y,z) = \U{0} \iff (x,y,z) = \U{0}$
induces a map $\overline{F}$ 
of $\PN{2}(\F_q)$ since $F \colon \F_q^3 \rightarrow \F_q^3$ 
defined via
\[
F(x,y,z)=(f(x,y,z),g(x,y,z),h(x,y,z)),
\]
satisfies $F(\lambda x,\lambda y , \lambda z)=\lambda^s F(x,y,z)$ 
for all $\lambda \in \F_q^\times$ and some $1\leq s <q-1$. 
We call such maps \emph{$s$-homogeneous}. Of course, this is equivalent to 
all of $f,g,h \colon \F_q^3 \rightarrow \F_q$ being $s$-homogeneous. 

To check whether $\overline F$ is bijective on $\PN{2}(\F_q)$ one needs to verify

\begin{gather}
	F(x,y,z) = (0,0,0) \iff (x,y,z) = (0,0,0), \label{eq_PP1} \\
	\left\{\frac{f(x,y,z)}{g(x,y,z)} \colon h(x,y,z) = 0 \right\} = \F_q \cup \{\infty\},\label{eq_PP2} \\
	\left\{\left(\frac{f(x,y,z)}{h(x,y,z)},\frac{g(x,y,z)}{h(x,y,z)}\right) 
	\colon h(x,y,z) \ne 0 \right\}=\F_q \times \F_q. \label{eq_PP3}
\end{gather}

\vspace{1em}

\subsection{Monomial bijections in $\PSQH$}
In the 1930s, Albert introduced the family of twisted fields.
Commutative twisted fields arise from Dembowski-Ostrom
monomials in $\SQH(1,q^n)$ that is to say
$A_\sigma(X) = X^{\sigma+1}$ where $q=p^m$ is odd, $\sigma = p^k$
with the condition that $nm/\gcd(k,nm)$ is odd.
We need the following well-known lemma.

\begin{lemma} \label{lem_gcd}
	Let $k,m \in \N$ and $p$ be a prime. Then
	\begin{itemize}
	\item $\gcd(p^k-1,p^m-1) = p^{\gcd(k,m)}-1$.
	\item $\gcd(p^k+1,p^m-1)=\begin{cases}
		1 & \text{if } m/\gcd(k,m) \text{ odd, and } p=2, \\
		2 & \text{if } m/\gcd(k,m) \text{ odd, and } p>2, \\
		p^{\gcd(k,m)}+1 & \text{if } m/\gcd(k,m) \text{ even}.
	\end{cases}$
\end{itemize}
\end{lemma}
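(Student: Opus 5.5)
The lemma is classical; I would prove both parts from the Euclidean algorithm on exponents.

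\emph{First part.} Use the identity $p^a - 1 \mid p^b - 1$ whenever $a \mid b$. Also note that for $a>b$ one has
\[
p^a-1 = p^{a-b}(p^b-1) + (p^{a-b}-1).
\]
Hence $\gcd(p^a-1,p^b-1)=\gcd(p^{a-b}-1,p^b-1)$. This mirrors the subtractive Euclidean algorithm on the pair $(a,b)$, so induction on $a+b$ gives $\gcd(p^k-1,p^m-1)=p^{\gcd(k,m)}-1$.

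\emph{Second part.} Put $d=\gcd(k,m)$ and $g=\gcd(p^k+1,p^m-1)$. Since $p^k+1 \mid p^{2k}-1$, we get $g \mid \gcd(p^{2k}-1,p^m-1)=p^{\gcd(2k,m)}-1$ by the first part. Now split on the parity of $m/d$.

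\begin{enumerate}
\item \textbf{$m/d$ odd.} Here $\gcd(2k,m)=d$, so $g \mid p^d-1$. Because $d\mid k$, we have $p^k\equiv 1 \pmod{p^d-1}$, hence $p^k+1\equiv 2$. Therefore $g=\gcd(2,p^m-1)$. This is $1$ for $p=2$ and $2$ for $p$ odd.
\item \textbf{$m/d$ even.} Here $k/d$ is odd and $2d\mid m$. Since $k/d$ is odd, $p^d+1 \mid p^k+1$ (as $x+1\mid x^{j}+1$ for odd $j$). Also $p^d+1\mid p^{2d}-1\mid p^m-1$, so $p^d+1 \mid g$. Conversely, $\gcd(2k,m)=2d$ because $m/d$ is even and $k/d$ is odd. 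So $g \mid p^{2d}-1=(p^d-1)(p^d+1)$.
\end{enumerate}

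The remaining step in the even case, and the only real obstacle, is to rule out extra factors of $g$ coming from $p^d-1$. Modulo $p^d-1$ we have $p^k+1\equiv 2$. Hence $\gcd(g,p^d-1)\mid 2$.

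For $p=2$ this gcd is $1$, since $p^d-1$ is odd. Then $p^d+1 \mid g$ and $g\mid (p^d-1)(p^d+1)$ force $g=p^d+1$.

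For $p$ odd a factor $2$ might appear, so I would argue with $2$-adic valuations. Write $p^k+1=(p^d+1)\cdot S$ with $S=\sum_{i=0}^{k/d-1}(-p^d)^i$. Then $S$ is a sum of $k/d$ odd terms, an odd number of them, so $S$ is odd. Hence $v_2(p^k+1)=v_2(p^d+1)$. Moreover $p^{2d}-1 \mid p^m-1$, so $v_2(p^m-1)\ge v_2(p^{2d}-1)>v_2(p^d+1)$. Therefore $v_2(g)=v_2(p^d+1)$. For odd primes $r\nmid p^d+1$, we have $r\nmid \gcd(g,p^d-1)$ because that gcd divides $2$. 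This gives $g=p^d+1$.
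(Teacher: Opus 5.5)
Your proof is correct. The paper gives no proof of this lemma: it is introduced as ``well-known'' and used without argument. So your write-up supplies a self-contained justification rather than following or departing from one in the text. The route you take is the standard one: the subtractive Euclidean algorithm on exponents for the first part, then $g \mid p^{\gcd(2k,m)}-1$ and a split on the parity of $m/d$ for the second.

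One small point in the even case should be tightened. Your sentence about odd primes $r \nmid p^d+1$ only shows that such primes do not divide $g$. It leaves implicit the odd primes $r \mid p^d+1$; for those, $r \nmid p^d-1$, so $v_r(g)\le v_r(p^d+1)$.

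A cleaner way to finish is to set $h = g/(p^d+1)$. Since $g \mid (p^d-1)(p^d+1)$, $h$ divides both $g$ and $p^d-1$. Hence $h \mid \gcd(g,p^d-1)$, which divides $2$ because $p^k+1\equiv 2 \pmod{p^d-1}$. For $p=2$ this forces $h=1$ directly. For odd $p$, your computation $v_2(g)=v_2(p^d+1)$ shows that $h$ is odd, so again $h=1$.
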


Now, $A_\sigma$ is two-to-one on $\F_{q^n}^\times$ if and only if
$p$ and $nm/\gcd(k,nm)$ are odd. Trivially, $\overline{A_\sigma} \in \PSQH(1,q^n).$
We can choose $\F_{q}$-bases of $\F_{q^n}$ and embed $A_\sigma$ in $\SQH(n,q)$. 
Now it is clear that the Albert monomials 
$\overline{A_\sigma} \in \PSQH(n,q)$ 
when $nm/\gcd(k,nm)$ is odd,
are
\begin{enumerate} 
\item bijective on $\PN{n-1}(\F_{q})$ if $n$ is odd, and
\item $2$-to-$1$ on $\PN{n-1}(\F_{q})$ if $n$ is even,
\end{enumerate}
by inspecting $\gcd(\sigma+1,(q^n-1)/(q-1))$.
Theorem~\ref{thm_sqh} (ii) shows that this is also the case for general $\SQH(n,q)$. 

Note that $A_{\id}(X) = X^2 \approx X^2/2 \in \QH(1,q^n)$ and 
$\Delta_{A_{\id}}$ is \textit{isotopic} to $X Y$, i.e., the finite field.
The corresponding projective plane arising from a skew-field is called
the Desarguesian plane. We proceed with the definition of isotopy.

\subsection{Isotopy of semifields and equivalence of functions} \label{s:isotopy}
As discussed in the introduction, all functions $F \in \SQH(n,q)$ can be 
naturally embedded in $\QH(nm,p)$ by fixing an $\F_p$-basis for $\F_q$, 
where $q=p^m$. The coarsest equivalence notion on $\SQH(n,q)$ we can 
consider is thus $\GL(nm,p)$-equivalence. For brevity, we will refer to this 
equivalence relation on functions as \emph{p-equivalence}.
\begin{definition}
Two functions $F_1,F_2 : (\F_p)^{nm} \to (\F_p)^{nm}$ are said to be $p$-equivalent if
\[
    F_1 \approx_p F_2 \iff F_2 = L_1 \circ F_1 \circ L_2
\]
for some $L_1,L_2 \in \GL(nm,p)$.
\end{definition}

We are interested in the interplay between $\GL(n,q)$-equivalence and $p$-equivalence 
of two planar functions $F,G \in \SQH(n,q)$ and isotopy of the (pre-)semifields 
$(\S_F,+,\ast_F)$, $(\S_G,+,\ast_G)$ induced by $F$ and $G$. It was proved in 
\cite[Theorem 3.5.]{coulterhenderson} that $F$ and $G$ are $p$-equivalent via 
$L_1 \circ F=G\circ L_2$ if and only if $L_1(x \ast_F y)=L_2(x)\ast_G L_2(y)$. 
Thus, $p$-equivalence of planar functions implies isotopy of the corresponding 
semifields. In fact, this type of isotopy is called strong isotopy. 
The converse direction is more delicate. It is possible that two planar functions 
associated to isotopic semifields are not $p$-equivalent 
(cf. \cite{ZP13} for an example). However, every isotopy 
class can produce at most two planar functions that are pairwise $p$-inequivalent. 
Exact conditions when a one-to-one correspondence between isotopy of semifields and 
$p$-equivalence of planar functions occurs are described in
\cite[Theorem 2.6]{coulterhenderson}.

Menichetti proved \cite{menichetti1977kaplansky} that any semifield three dimensional
over its \textit{center} $\F_q$ (see the precise definition in Appendix~\ref{sec_nuclei}),
that is to say, whose multiplication can be written as
\[
	X \ast Y = \sum_{0 \le i,j \le 2} a_{i,j} X^{q^i}Y^{q^j}, a_{i,j} \in \F_{q^3},
\]
is isotopic to either $XY$ or $XY+bX^{q^i}Y^{q^j}$ with $b \in \F_{q^3}$. 
Kantor \cite[Proposition 5.3]{Kantor03} classified all commutative semifields of this form. 
Using the connection between the notions of equivalence outlined above we get the following result.
\begin{theorem}[The Menichetti-Kaplansky Theorem for quadratic maps] 
\label{thm_menichetti}
Let $F \in \QH(3,q)$ be planar. Then $F \approx X^2$ or $F \approx X^{q+1}$.
\end{theorem}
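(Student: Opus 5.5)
The idea is to translate Menichetti's theorem into a statement about $F$ through three steps: planarity, Dembowski--Ostrom, and Kaplansky's trick, then Kantor's list, then the Coulter--Henderson correspondence.

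\emph{Step 1 (from $F$ to a commutative semifield over its center).} Let $F\in\QH(3,q)$ be planar. Then $q$ is odd and $\Delta_F$ is a symmetric $\F_q$-bilinear pre-semifield multiplication on $\F_q^3\cong\F_{q^3}$. In the univariate representation we have $F(X)=\sum_{0\le i,j\le 2}B_{ij}X^{q^i+q^j}$. Hence
\[
\Delta_F(X,Y)=\sum_{i,j} a_{ij}X^{q^i}Y^{q^j},\qquad a_{ij}=a_{ji}.
\]
Applying Kaplansky's trick with a suitable $e$ gives a commutative semifield. Because the pre-semifield is $\F_q$-bilinear, its center contains $\F_q$.

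\emph{Step 2 (identify the semifield).} If the center is $\F_{q^3}$, the semifield is a field. Otherwise the semifield is three-dimensional over its center $\F_q$. Menichetti's theorem then says it is isotopic to $XY$ or to a twisted field $XY+bX^{q^i}Y^{q^j}$. Commutativity together with Kantor's classification \cite[Proposition 5.3]{Kantor03} of commutative semifields of this form forces the commutative twisted field to be, up to isotopy, $\Delta_{X^{q+1}}$ or $\Delta_{X^{q^2+1}}$. Since $X^{q^2+1}=(X^{q+1})^{q^2}\circ X^q$ in a suitable sense, i.e.\ it is obtained by composing with Frobenius maps, which lie in $\GL(3,q)$, this reduces to $X^{q+1}$. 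One point needs checking here: in dimension $3$, $X^{q+1}$ is indeed planar, because $3/\gcd(1,3)$ is odd and $q$ is odd.

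\emph{Step 3 (from isotopy back to equivalence).} This is the main obstacle. Isotopy of the semifields does not in general imply equivalence of the planar functions. The plan is to invoke the Coulter--Henderson criterion \cite[Theorem 2.6]{coulterhenderson}: commutative semifields with odd dimension over the middle nucleus, or with a nucleus of suitable index, give a one-to-one correspondence between isotopy and strong isotopy. For both the field and the twisted field of order $q^3$, the middle nucleus is $\F_q$ or $\F_{q^3}$, so $[\S:N_m]$ is odd and the correspondence applies. Strong isotopy then yields $p$-equivalence via $L_1\circ F=G\circ L_2$ with $L_1,L_2\in\GL(3m,p)$.

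We still need $\GL(3,q)$-equivalence rather than merely $\GL(3m,p)$-equivalence. For this, observe that a strong isotopism $L_1(x\ast_F y)=L_2(x)\ast_G L_2(y)$ maps the center of one semifield onto the center of the other. Its restriction to $\F_q$ is therefore a field automorphism $\tau$, so $L_1$ and $L_2$ are $\tau$-semilinear over $\F_q$. Composing with the inverse Frobenius $\tau^{-1}$ on both sides makes the maps $\F_q$-linear, and $X^2$ and $X^{q+1}$ are fixed, up to $\GL(3,q)$, under conjugation by Frobenius. This gives $F\approx X^2$ or $F\approx X^{q+1}$. The delicate points are verifying the nucleus hypothesis of the Coulter--Henderson criterion and tracking the semilinearity; these are what I expect to take the most care.
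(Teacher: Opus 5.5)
Your proposal is correct and follows essentially the paper's own route: Menichetti's theorem, then Kantor's classification of the commutative members, then the Coulter--Henderson isotopy/strong-isotopy correspondence, whose hypothesis holds since $N_m=N$ for both $\F_{q^3}$ and $\Delta_{X^{q+1}}$. It also makes explicit the semilinearity step the paper leaves implicit; stated precisely, $L_2\circ(\lambda\cdot)\circ L_2^{-1}$ satisfies $\Delta_G(Tu,v)=\Delta_G(u,Tv)$, so it is multiplication by an element of $\F_q$ (resp.\ $\F_{q^3}$), which yields an automorphism $\tau$ of $\F_q$ (rather than any literal mapping of centers), and one small slip is that $(X^{q+1})^{q^2}$ already equals $X^{q^2+1}$ modulo $X^{q^3}-X$, so the extra ``$\circ X^q$'' should be dropped (with it you get $X^{q+1}$ back).
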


A main achievement of this paper (see Theorem~\ref{thm:inequiv}) is to show that the situation is completely different when considering $F \in \SQH(3,q)$, as Theorems~\ref{thm_main} and~\ref{thm_main_v2} yield many new semifields that are not equivalent to semifields induced by Dembowski-Ostrom monomials.

\subsection{Degenerate bijections}

Let $q = p^m$ and $\sigma = p^k$. In even characteristic, 
$\gcd(\sigma+1,q-1) = 1$ can happen when $n/\gcd(k,n)$ is odd 
(see Lemma \ref{lem_gcd}). In this case there are many ``degenerate'' 
semiquadratic maps in $\SQH(n,q)$ that are bijections, 
for instance
\begin{enumerate}
\item $(x,y,z) \mapsto (x^{\sigma+1},y^{\sigma+1},z^{\sigma+1})$, or
\item $(x,y,z,t) \mapsto (T_\beta^{-1}(T_\beta(x,y,z)^{\sigma+1})),t^{\sigma+1})$
\end{enumerate}
where in (ii) we identify $\F_q^3$ with the extension field $\F_{q^3}$ 
using $T_\beta :  (x,y,z) \mapsto \beta_1 x + \beta_2 y + \beta_3 z $
and apply a bijection in $\SQH(1,q^3)$ and then embed back to $\F_q^3$.

This suggests the following definition.

\begin{definition} We say that a semiquadratic map 
$S(\U{x}) = (S_1(\U{x}),\ldots,S_n(\U{x}))\in\SQH(n,q)$ 
is degenerate if a $\GL(n,q)$-equivalent 
semiquadratic map 
$T(\U{x}) = (T_1(\U{x}),\ldots,\allowbreak T_n(\U{x}))$
contains a semiquadratic form $T_j$ which is independent of a variable $x_i$.
We say that $\overline{S} \in \PSQH(n,q)$ is degenerate if $S$ is degenerate.
\end{definition}

We are going to show that a semiquadratic bijection when $p$ is odd
cannot be degenerate. This is surely not the case for $p = 2$ as we
mentioned above.

\begin{proposition}
Let $q=p^m$ and $S \in \SQH(n,q)$ with companion automorphism $\sigma$. 
If $\overline{S} \in \PSQH(n,q)$ is bijective on $\PN{n-1}(\F_q)$ and degenerate 
then $\gcd(\sigma+1,q-1)=1$, in particular $p=2$.
\end{proposition}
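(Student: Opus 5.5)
Since bijectivity of $\overline{S}$ on $\PN{n-1}(\F_q)$ and degeneracy are both preserved under $\GL(n,q)$-equivalence (a linear map on either side induces a bijection of $\PN{n-1}(\F_q)$ and preserves $S(\U x)=\U 0\iff \U x=\U 0$), we may replace $S$ by the equivalent map $T$ from the definition of degeneracy. So assume $T=(T_1,\ldots,T_n)$ induces a bijection of $\PN{n-1}(\F_q)$ and some component, say $T_j$, does not depend on the variable $x_i$. The goal is to show that $T$ then has too few nonzero values to cover the projective space unless $\gcd(\sigma+1,q-1)=1$.

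\textbf{Counting step.} Let $d=\gcd(\sigma+1,q-1)$. Since $T(\lambda\U x)=\lambda^{\sigma+1}T(\U x)$ and $T$ has no nontrivial zeros, bijectivity of $\overline{T}$ implies that for each $\U v\ne\U 0$ the set $\{\U x : T(\U x)\in \F_q^\times\U v\}$ is exactly one line minus $\U 0$. On this line $T$ takes values $\lambda^{\sigma+1}T(\U x_0)$. These are exactly $(q-1)/d$ distinct values, and each is attained $d$ times. Hence the image of $T$ on $\F_q^n\setminus\{\U 0\}$ has exactly $(q^n-1)/d$ elements, each with exactly $d$ preimages. (This is Theorem~\ref{thm_sqh}(iii), and we may simply invoke it.)

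\textbf{Exploiting independence of $x_i$.} Put $\U e_i$ for the $i$-th unit vector and consider the set $W=\{\U x : x_k=0 \text{ for all } k\ne i\}=\F_q\U e_i$. Then $T_j(t\U e_i)=T_j(\U 0)=0$ for all $t$, so $T$ maps the whole line $\F_q\U e_i$ into the hyperplane $H_j=\{T_j=0\}$. More usefully, for any $\U x$ and any $t$, the value $T_j(\U x+t\U e_i)=T_j(\U x)$. I would compare preimage sizes. The point $\overline{T}(\overline{\U e_i})$ is some projective point $\overline{\U v}$ with $v_j=0$. I then need to produce more than $d$ preimages of some value, or a zero.

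The cleanest route seems to be to count points with $T_j=0$. Since $T_j$ does not involve $x_i$, the zero set $Z=\{\U x: T_j(\U x)=0\}$ is a union of full affine lines parallel to $\U e_i$, so $|Z|\equiv 0 \pmod q$. On the other hand, bijectivity says $Z\setminus\{\U 0\}$ is the preimage of the hyperplane $H_j$ minus $\U 0$. It therefore consists of exactly $|\PN{n-2}(\F_q)|$ projective points, i.e.\ $|Z|=1+(q-1)\frac{q^{n-1}-1}{q-1}=q^{n-1}$. This is divisible by $q$, so no contradiction arises, and this counting alone is not enough.

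\textbf{Main obstacle.} I therefore expect the key step to be a finer argument using the semiquadratic structure along the lines $\U x+t\U e_i$. Restricted to such a line, $t\mapsto T(\U x+t\U e_i)$ has constant $j$-th coordinate. The other coordinates are of the form $\alpha t^{\sigma+1}+\beta t^\sigma+\gamma t+\delta$ by the polarization identity \eqref{eq:semi2}. I would first try $\U x$ with $T_j(\U x)=c\ne0$, so that the $q$ points of the affine line map to vectors with $j$-th coordinate $c$. Projectively these are distinct points of the affine chart $\{y_j\ne 0\}$ normalized by $y_j=c$. Injectivity of $\overline{T}$ forces $t\mapsto T(\U x+t\U e_i)$ to be injective, since equal values would give equal projective images of distinct projective points: the points $\U x+t\U e_i$ are pairwise non-proportional, because $\U x\notin\F_q\U e_i$ as $T_j(\U x)\neq0=T_j(\F_q\U e_i)$.

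Now take the pair $\U x$ and $\mu\U x$ with $\mu^{\sigma+1}=1$ and $\mu\ne1$, which exists iff $d>1$. Both lie in the $j$-th slice with the same value $c$. The lines $\U x+t\U e_i$ and $\mu\U x+t\U e_i=\mu(\U x+\mu^{-1}t\U e_i)$ have images $T(\U x+s\U e_i)$ and $\mu^{\sigma+1}T(\U x+\mu^{-1}t\U e_i)=T(\U x+\mu^{-1}t\U e_i)$. So the image sets coincide while the projective points $\overline{\U x+t\U e_i}$ and $\overline{\mu\U x+t\U e_i}$ differ for $t\ne0$. This contradicts injectivity of $\overline{T}$ when $d>1$. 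Hence $d=1$, and by Lemma~\ref{lem_gcd} $d\ge2$ for odd $p$, so $p=2$.
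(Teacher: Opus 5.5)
There is a genuine gap in your final step: it does not produce a collision. Since $\mu\U x+t\U e_i=\mu(\U x+\mu^{-1}t\U e_i)$, the projective point $\overline{\mu\U x+t\U e_i}$ is the same as $\overline{\U x+\mu^{-1}t\U e_i}$. So the affine lines $\U x+\F_q\U e_i$ and $\mu\U x+\F_q\U e_i$ represent exactly the same $q$ projective points, and equality of their image sets is automatic. To contradict injectivity of $\overline T$ you need two non-proportional vectors with proportional images. The pair you compare is $T(\U x+t\U e_i)$ and $T(\mu\U x+t\U e_i)=T(\U x+\mu^{-1}t\U e_i)$. For $t\neq0$ these are \emph{different} vectors, by the injectivity of $s\mapsto T(\U x+s\U e_i)$ that you just established. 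Both have $j$-th coordinate $c\neq 0$, so they are not even proportional.

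A sanity check confirms the step cannot be repaired as written. Apart from guaranteeing $\U x\notin\F_q\U e_i$, the argument never uses that $T_j$ is independent of $x_i$; for any $\U x\notin\F_q\U e_i$ it runs on $(\sigma+1)$-homogeneity alone. It would therefore show that no $\overline T\in\PSQH(n,q)$ with $\gcd(\sigma+1,q-1)>1$ is bijective. That contradicts the bijectivity of the Albert monomials $X^{\sigma+1}$ on $\PN{n-1}(\F_q)$ for odd $q$ and odd $n$, and it contradicts Theorem~\ref{thm_main_v2} for odd $q$.

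What is missing is the place where degeneracy genuinely enters, namely the derivative of $T$ in the direction $\U u=b\U e_i$ with $b\neq 0$.
\begin{itemize}
\item The map $D(\U x)=T(\U x+\U u)-T(\U x)$ is $\F_p$-affine with linear part $L(\U x)=T(\U x+\U u)-T(\U x)-T(\U u)$.
\item The $j$-th coordinate of $L$ vanishes identically because $T_j$ does not depend on $x_i$. Hence $|\ker L|\ge q$, and every nonempty fibre of $D$ has at least $q$ elements.
\item If $\mu^{\sigma+1}=1$ with $\mu\neq 1$, then $\U x_0=\U u/(\mu-1)$ satisfies $\U x_0+\U u=\mu\U x_0$, so $D(\U x_0)=0$. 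Therefore at least $q$ vectors $\U x$ satisfy $T(\U x+\U u)=T(\U x)$.
\item Bijectivity of $\overline T$ forces $\U x+\U u$ to be proportional to $\U x$, i.e.\ $\U x\in\F_q\U u$. Only $\gcd(\sigma+1,q-1)-1<q$ such $\U x$ exist, which is a contradiction.
\end{itemize}
This is the paper's proof. In your own language, it produces some $\U x\notin\F_q\U e_i$ for which $s\mapsto T(\U x+s\U e_i)$ takes the same value at $s=0$ and $s=b$, which is exactly the failure of injectivity you were looking for.
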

\begin{proof}
If $\overline{S}$ is degenerate, than so is $S \in \SQH(n,q)$ by definition.
Let, if necessary after (left and right) composition with some $L_1,L_2 \in \GL(n,q)$, 
$S = (S_1,\ldots,S_n) \in \SQH(n,q)$ have (w.l.o.g.) $S_n$ independent of the variable
(w.l.o.g.) $x_n$. Then 
\[
	S_n(\U x+\U u) - S_n(\U x) - S_n(\U u) = 0
\]
when $\U {u} = (0,\ldots,0,b)$ for some $b \in \F_q^\times$. Thus 
the $\F_q$-semilinear (thus $\F_p$-linear) function
\[
	L_{\U u}=S(\U{x}+\U{u}) - S(\U{x}) - S(\U{u}) 
\]
has $\F_p$-rank at most $m(n-1)$.
Consider now the $\F_p$-affine map 
\[
	D_{\U u} : \U x \mapsto S(\U x+\U u) - S(\U x).
\]
By the previous considerations, every element in the image set of 
$D_{\U u} $ has at least $q$ preimages. Assume now that 
$\gcd(\sigma +1,q-1)>1$. Then we have a $1\neq c \in \F_q$ 
such that $c^{\sigma+1}=1$. Then 
\[
D_{\U u} \left(\frac{1}{c-1}\U u\right)
          =S\left(\frac{c}{c-1}\U u\right)-S\left(\frac{1}{c-1}\U u\right)
          =c^{\sigma+1}S\left(\frac{1}{c-1}\U u\right)-S\left(\frac{1}{c-1}\U u\right) = 0.
\]
So, $0$ is in the image set of $D_u$, which means $0$ has at 
least $q$ preimages. Since $q>q-1$, this means $\overline{S}$ cannot 
be bijective. We conclude that $\gcd(\sigma +1,q-1)=1$. 
\end{proof}

%%%%%%%%%%%%%%%%%%%%%%%%%%%%%%%%%%%%%%%%%%%%%%%%%%%%%%%%%%%%%%%%%%%%%%
\section{Proof of the main result}\label{sec_main}

We start this section by proving Theorem~\ref{thm_sqh}. 

\subsection{Proof of Theorem~\ref{thm_sqh}}\label{sec:proof_bijections}
Let us repeat the statement of Theorem~\ref{thm_sqh} here.
We will need a generalization of 
\cite[Theorem 1.8]{weng2012further} of Weng and Zeng 
which we provide in the appendix (see Proposition \ref{prop_wengzeng}).

\begin{theorem*}
Let $q$ be odd, $F \in \SQH(n,q)$
with companion automorphism $\sigma \in \Aut(\F_q)$.
\begin{enumerate}
\item The polarization of $F$ is a (commutative)
semifield if and only if
\begin{enumerate}
\item $F(\U x) = \U 0 \iff \U x = \U 0$, and
\item $F$ is $2$-to-$1$ on $\F_q^n \setminus \{\U 0\}$.
\end{enumerate}
\item  If (a) and (b) in (i) hold then
\begin{enumerate}
\item if $n$ is odd:  $\overline{F}$ is a bijective on $\PN{n-1}(\F_q)$, and
\item if $n$ is even: $\overline{F}$ is $2$-to-$1$ on $\PN{n-1}(\F_q)$.
\end{enumerate}
\item If $\overline{F}$ is bijective on $\PN{n-1}(\F_q)$ then $F$ is
$\gcd(\sigma+1,q-1)$-to-$1$ on $\F_q^n \setminus \{\U 0\}$.
\end{enumerate}
\end{theorem*}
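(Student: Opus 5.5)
Part (i) is the generalization of the Weng--Zeng criterion, and I would take it from Proposition~\ref{prop_wengzeng} in the appendix. For completeness, the idea is as follows. The polarization $\Delta_F(\U x,\U y)=B(\U x,\U y)+B(\U y,\U x)$ is biadditive, so it is $\F_p$-bilinear. Hence it defines a commutative pre-semifield exactly when $\Delta_F(\U x,\U a)\neq \U 0$ for all $\U x,\U a\neq \U 0$. Now $F(\U x+\U a)=F(\U x)$ is equivalent to $\Delta_F(\U x,\U a)=-F(\U a)$. Writing $\U x=-\U a/2+\U w$ and using $\Delta_F(\U a,\U a)=2F(\U a)$, this reduces to $\Delta_F(\U w,\U a)=\U 0$. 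Consequently, $F$ being $2$-to-$1$ with $F(\U x)=\U 0\iff\U x=\U 0$ corresponds exactly to the only collisions being $\U x$ and $-\U x$. Since $q$ is odd, $F(-\U x)=F(\U x)$ always holds, so $F$ is at least $2$-to-$1$.

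For (ii), let $d=\gcd(\sigma+1,q-1)$; by Lemma~\ref{lem_gcd}, $d=2$ or $d=p^{\gcd(k,m)}+1$ with $q$ odd. Assume (a) and (b). For each $\lambda\in\F_q^\times$ we have $F(\lambda\U x)=\lambda^{\sigma+1}F(\U x)$, and the map $\lambda\mapsto\lambda^{\sigma+1}$ is $d$-to-$1$ onto the subgroup $H$ of index $d$. Fix $\U x\neq\U 0$. The points $\lambda\U x$ with $\lambda^{\sigma+1}=1$ all have the same image, so 2-to-1 forces $d\le 2$, hence $d=2$. This means $m/\gcd(k,m)$ is odd, so $H$ is the set of squares. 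The fibre of $\overline F$ over $\overline{F(\U x)}$ consists of projective points $\overline{\U y}$ with $F(\U y)=\mu F(\U x)$ for some $\mu\in\F_q^\times$. If $\mu$ is a square we may rescale $\U y$ so that $\mu=1$, which gives $\U y=\pm\U x$, the same point. So the fibre has size $1$ plus the number of classes achieving a nonsquare $\mu$, and these form a single class by the same argument. It therefore remains to decide whether some $\U y$ has $F(\U y)=\nu F(\U x)$ with $\nu$ a nonsquare.

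To settle this, count. The map $F$ hits exactly $(q^n-1)/2$ nonzero values, and the image is closed under multiplication by squares. Each projective image point $\overline{\U v}$ of $\overline F$ contributes either $(q-1)/2$ or $q-1$ image vectors, according as one or both square classes on the line $\F_q^\times \U v$ are hit. If $t$ projective points of the image have both classes hit and $s$ have only one, then $s\frac{q-1}{2}+t(q-1)=\frac{q^n-1}{2}$. Also, the number of preimage projective points is $q^n-1)/(q-1)=s+2t$, since fibres have size $1$ or $2$ by the above. These two equations coincide, so a parity or field-theoretic input is needed. I would use the semifield structure: $\U a\mapsto F(\U a)$ restricted to a line through the origin, combined with the fact that $\Delta_F(\cdot,\U a)$ is bijective. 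The cleanest route I see is a norm argument. For $n$ odd, $-1$ times a nonsquare is handled by considering $\det$ of the $\F_q$-semilinear map $\U y\mapsto\Delta_F(\U y,\U a)$ and comparing how scaling $\U a$ by $\lambda$ scales this determinant, giving $\lambda^{n(\sigma+1)}$ versus a nonsquare. This should show that for odd $n$ nonsquare multiples are never hit ($t=0$, so the map is bijective), and for even $n$ they always are ($s=0$, so the map is $2$-to-$1$). This is the main obstacle, and I would first test the claim on Albert monomials as a sanity check.

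Part (iii) is direct. If $\overline F$ is bijective, then (a) holds and for each $\U x$ the set $F^{-1}(\F_q^\times F(\U x))$ is exactly the line $\F_q^\times\U x$. On this line, $\lambda\mapsto\lambda^{\sigma+1}F(\U x)$ is $\gcd(\sigma+1,q-1)$-to-$1$, so every nonzero value in the image has exactly $\gcd(\sigma+1,q-1)$ preimages.
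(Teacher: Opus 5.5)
Your arguments for (i) (via $F(\U x+\U a)=F(\U x)\iff\Delta_F(\U x+\U a/2,\U a)=\U 0$) and for (iii) are correct. In (ii) your preliminary reductions are also right: $\gcd(\sigma+1,q-1)=2$, the $(\sigma+1)$-st powers are exactly the squares, and the fibre of $\overline F$ over $\overline{F}(\overline{\U x})$ has two points or one according to whether some nonsquare multiple $\nu F(\U x)$ lies in $\im(F)$. But deciding that question is the entire content of (ii), and your proposal does not settle it. As you note, counting cannot decide it. The determinant sketch also fails as written, for three reasons. First, $\U y\mapsto\Delta_F(\U y,\U a)=B(\U y,\U a)+B(\U a,\U y)$ is the sum of an $\F_q$-linear and a $\sigma$-semilinear map, so it is not $\F_q$-semilinear; it is only linear over $\F_{p^d}=\Fix_q(\sigma)$, where $d=\gcd(k,m)$. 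Second, since $\Delta_F(\U y,\lambda\U a)=\lambda^{\sigma+1}\Delta_F(\lambda^{-1}\U y,\U a)$, its $\F_{p^d}$-determinant is multiplied by $\Nm_{\F_q/\F_{p^d}}(\lambda)^n$, not by $\lambda^{n(\sigma+1)}$. Third, and decisively, you give no link between the determinant of $\Delta_F(\cdot,\U a)$ and whether $\nu F(\U x)\in\im(F)$. Scaling the input $\U a$ is the wrong side to look at.

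The missing ingredient is precisely Proposition~\ref{prop_wengzeng}. You cite it for (i), where it plays no role; the paper cites Weng--Zeng and Kyureghyan--Pott there. The proposition says that for $n$ odd and $\alpha$ a nonsquare, $\alpha\im(F)\cap\im(F)=\{\U 0\}$, while for $n$ even, $\alpha\im(F)=\im(F)$. Plugged into your fibre analysis, this gives (ii) immediately, and that is exactly the paper's argument.

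If you want to prove it yourself, the determinant must be taken on the output side and combined with character sums. Put $K=\F_{p^d}$ and $N=nm/d$, so that $F$ is a quadratic map $K^N\to K^N$. For each nonzero $K$-linear functional $\beta$, the form $\beta\circ F$ is nondegenerate: its polar form is $\beta\circ\Delta_F$, and $\Delta_F(\U x,\cdot)$ is onto for $\U x\neq\U 0$ by planarity. Hence $\sum_{\U x}\chi(\beta(F(\U x)))=\eta(\det(\beta\circ F))\,G^N$, where $\chi$, $\eta$, $G$ are the canonical additive character, the quadratic character and the quadratic Gauss sum of $K$. Replacing $\beta$ by $\epsilon\beta$ with $\epsilon\in K^\times$ multiplies this sum by $\eta(\epsilon)^N$. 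Fourier inversion then yields $\#F^{-1}(\U c)+\#F^{-1}(\epsilon\U c)=2$ if $\eta(\epsilon)^N=-1$, and $\#F^{-1}(\epsilon\U c)=\#F^{-1}(\U c)$ otherwise. Finally, $m/d$ is odd, so $N\equiv n\pmod 2$ and nonsquares of $K$ remain nonsquares in $\F_q$. Moreover, every $\alpha\in\F_q^\times$ is an element of $K^\times$ times a square of $\F_q$, i.e.\ times a $(\sigma+1)$-st power. These facts transfer the statement from $K$ to $\F_q$.
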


\begin{proof}
\begin{enumerate}
\item[]
\item $\Leftarrow$ proved by Weng and Zeng ~\cite[Theorem 2.3]{weng2012further} and 
$\Rightarrow$ proved by Kyureghyan and Pott ~\cite[Corollary 1]{KyuPott}.

\item Assume that $n$ is odd, and assume further that 
$\overline{F}(\U{x_1})=\overline{F}(\U{x_2})$
for $\U{x_1},\U{x_2} \in \F_q^n \setminus \{\U 0\}$
i.e., 
$F(\U{x_2})=\lambda F(\U{x_1})$ for some 
$\lambda \in \F_q^\times$. 
We will show that $\U{x_1} \sim \U{x_2}$ 
i.e., they correspond the the same point in $\PN{n-1}(\F_q)$. 
By Proposition~\ref{prop_wengzeng}, $\lambda$ is necessarily 
a square, say $\lambda=\mu^{\sigma+1}$ for some $\mu \in \F_q^\times$ 
since the set of squares in $\F_q^\times$ is exactly the set of 
$(\sigma+1)$-st powers since $F$ is $2$-to-$1$ on $\F_q^\times$. 
So $F(\U{x_2})=\mu^{\sigma+1} F(\U{x_1})=F(\mu \U{x_1})$. 
Since $F(\U{x})=F(-\U{x})$ for all $x \in \F_q^n$ 
(for $F \in \SQH(n,q)$) 
and $F$ is $2$-to-$1$ by assumption, 
we necessarily get $\U{x_2}= \pm \mu \U{x_1}$. 
		
Now consider the case that $n$ is even. Let $\mu$ be a non-square 
in $\F_q^\times$ and $\U{x_1}\in \F_q^n\setminus\{\U{0}\}$.
By Proposition~\ref{prop_wengzeng}, there exists 
a unique $\U{x_2} \in \F_q^n \setminus \{\U 0\}$, such that 
$F(\U{x_2})=\mu F(\U{x_1})$, 
in particular 
$\overline{F}(\U{x_1})=\overline{F}(\U{x_2})$. 
Now suppose for the sake of contradiction that 
$\U{x_1} \sim \U{x_2}$. 
Then $\U{x_2}=\lambda\U{x_1}$ for some 
$\lambda \in \F_q^\times$, so 
$F(\U{x_2})=\lambda^{\sigma+1}F(\U{x_1})$, 
and $\lambda^{\sigma+1}=\mu$ which is a non-square in $\F_q^\times$, 
leading to the desired contradiction. 
     
\item  Suppose $F(\U{x_1})=F(\U{x_2})$ 
for $\U{x_1},\U{x_2} \in \F_q^n \setminus \{\U 0\}$. 
Then clearly 
$\overline{F}(\U{x_1})=\overline{F}(\U{x_2})$,
and since $\overline{F}$ is bijective on $\PN{n-1}(\F_q)$, 
we have $\U{x_2}=\lambda \U{x_1}$ for some 
$\lambda \in \F_q^\times$. So 
$F(\U{x_2})=\lambda^{\sigma+1}F(\U{x_1})=F(\U{x_1})$ 
if and only if $\lambda^{\sigma+1}=1$, 
i.e., $\lambda$ is a $(\sigma+1)$-st root of unity in $\F_q^\times$, 
and the number of such roots of unity is exactly $\gcd(\sigma+1,q-1)$. 
\end{enumerate}
\end{proof}

\subsection{Proof of Theorem~\ref{thm_main_v2}}
Now we prove Theorem~\ref{thm_main_v2}. Let us repeat the statement.

\begin{theorem*} 
Let $q = p^m$ and $f,g,h \colon \F_q^3\rightarrow \F_q$ be defined as
\begin{align*}
f(x,y,z) &=  x^{\sigma+1} + ay^{\sigma}z + bx^{\sigma}y + cx^{\sigma}z,\\
g(x,y,z) &= ay^{\sigma+1} +  z^{\sigma}x + bz^{\sigma}y + cx^{\sigma}y,\\
h(x,y,z) &=  z^{\sigma+1} -  x^{\sigma}y,
\end{align*}
where $a,b,c \in \F_q$ and 
$\sigma=p^k$.

Then $\overline{F} = (\overline {f,g,h})\in \PSQH(3,q)$ is bijective 
if and only if the equation	
\[
	x^{\sigma^2+\sigma+1} + cx^{\sigma^2+\sigma} + bx^{\sigma^2} + a = 0
\]
has no solution $x \in \F_q$. 
\end{theorem*}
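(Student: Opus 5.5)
We verify the three conditions \eqref{eq_PP1}, \eqref{eq_PP2}, \eqref{eq_PP3}. Since $\PN 2(\F_q)$ is finite, it suffices to show that $\overline F$ is well defined and injective. Equivalently, for every target point we count its preimages: we solve $F(x,y,z)=\lambda(u,v,w)$ and show that, modulo scalars, the solution set is a single point exactly when the cubic-type equation
\[
P(t)=t^{\sigma^2+\sigma+1}+ct^{\sigma^2+\sigma}+bt^{\sigma^2}+a
\]
has no root in $\F_q$. The guiding idea is that $h=z^{\sigma+1}-x^\sigma y$ suggests the parametrization of a twisted conic. Points with $h=0$ and $x\neq 0$ can be written as $y=z^{\sigma+1}/x^\sigma$. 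After normalizing $x=1$ they become $(1,s^{\sigma+1},s)$. The remaining points with $x=0$ force $z=0$, giving the single point $(0,1,0)$.

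First we handle \eqref{eq_PP1} and \eqref{eq_PP2}, which concern the preimage of the line $h=0$. On $(1,s^{\sigma+1},s)$ we compute
\[
f=1+a s^{\sigma^2+\sigma+1}+b s^{\sigma+1}+cs,\qquad g=a s^{\sigma^2+\sigma}+ s^{\sigma}+b s^{\sigma+\sigma^2}\cdot s^{1-\sigma}\cdots
\]
These are to be simplified carefully; the expectation is that $f=s\cdot(\text{something})+\dots$ with $g$ closely related. The aim is to show that the ratio $f/g$, viewed as a function of $s\in\F_q\cup\{\infty\}$ (the value $\infty$ corresponding to $(0,1,0)$, where $F=(a,a,0)$ up to the relevant terms), is a bijection onto $\F_q\cup\{\infty\}$. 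Simultaneously, $f=g=0$ must occur only when $P$ has a root; the natural substitution is $t=s^{-1}$ or $t=s^{\sigma}$-type, turning $f=0$ or $g=0$ into $P(t)=0$. Concretely, I expect $g= s^\sigma\cdot(\ldots)$ and $f - s\,g$ to be a $\sigma$-power of an expression in $s$, so that $f/g = s + (\text{unit})^{-1}$-shaped and hence bijective. This is the step to compute first, since it reveals the structure.

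Next we handle the affine part \eqref{eq_PP3}. Given $(u,v)\in\F_q^2$, we must show there is exactly one point with $h\ne0$, $f=uh$ and $g=vh$. I would scale so that $z$ is eliminated or set to $1$ where possible. Then I would use the linear-in-$x$ and linear-in-$z$ structure: $g$ is affine in $x$ given $y,z$ (through $z^\sigma x$ and $cx^\sigma y$), and $f$ is affine in $z$. The plan is to solve $f-uh=0$ for one variable semilinearly, substitute into $g-vh=0$, and reduce to a single equation in one variable $t$. That equation should be $\sigma$-semilinear composed with $P$, for example $P(t)=\text{const}\cdot w^{\sigma^2+\sigma+1}$ after a change of variables. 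Its solutions correspond bijectively to preimages. Alternatively, and likely more cleanly, injectivity can be shown directly. Suppose $F(\U x)=\mu F(\U x')$. Taking a linear combination that kills the quadratic pieces yields a norm-like equation $N(t):=t^{\sigma^2+\sigma+1}$ mixed with lower terms, i.e.\ exactly $P(t)=0$ with $t$ a ratio of coordinates. A nontrivial collision then produces a root of $P$.

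For the converse, given a root $t_0$ of $P$, I would construct explicitly a nonzero $\U x$ (using the conic parametrization above) with $F(\U x)=\U 0$, or two distinct points with equal image, thereby violating \eqref{eq_PP1} or \eqref{eq_PP2}. The main obstacle is the elimination in the affine case: showing that the resulting equation factors through $P$ with $\sigma^2$-twists in the right places. I would first test the case $b=c=0$, where $P(t)=t^{\sigma^2+\sigma+1}+a$ and the map looks like a twisted norm form. I would then use that case to guess the substitution for general $b,c$.
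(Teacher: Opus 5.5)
\textbf{There is a genuine gap.} Your proposal is a plan rather than a proof. The step that carries the weight, condition \eqref{eq_PP3} (every affine point $(u,v)$ is hit by a point with $h\neq 0$), is never carried out, and the mechanisms you suggest for it would not work as stated.
\begin{itemize}
\item A collision $F(\U x)=\mu F(\U x')$ cannot be turned into a root of $P$ by ``a linear combination that kills the quadratic pieces''. $F$ is not additive, so collisions are not kernel elements, and nothing forces a norm-like equation to appear.
\item You cannot ``solve $f-uh=0$ semilinearly for one variable''. $g$ contains both $z^\sigma x$ and $cx^\sigma y$, and $f-uh$ contains $-uz^{\sigma+1}$, so neither equation is semilinear in a single variable.
\item The reduced equation is not of the form $P(t)=\mathrm{const}\cdot w^{\sigma^2+\sigma+1}$.
\end{itemize}

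\textbf{The missing idea is a specific elimination.} Handle the lines $z=0$ and $x=0$ by hand: their images are explicit sets $S_Z$, $S_X$ of $q-1$ affine points each, and $(0,0,1)\mapsto(0,0)$. Then work in the chart $z=1$, $x\neq0$, and substitute $t=x^\sigma y$ with $t\neq1$. Clearing denominators in $f=uh$, $g=vh$ gives two equations $E_1,E_2$ in $(x,t)$. Both $E_2-E_1$ and $tE_1-E_2$ are divisible by $t-1$, and setting the cofactors to zero gives
\[
a t^{\sigma}=x^{\sigma^2}\bigl(u-(v+c)x^{\sigma}\bigr),\qquad (u+b)\,t=x^{\sigma}(v-x).
\]
For $u\neq-b$, raising the second relation to the power $\sigma$ eliminates $t$ and leaves an equation \emph{linear} in $x^\sigma$:
\[
x^{\sigma}\bigl((u+b)^{\sigma}(v+c)-a\bigr)=u(u+b)^{\sigma}-a v^{\sigma}.
\]
Its coefficient vanishes exactly on $S_Z$, and its right-hand side vanishes exactly on $S_X\cup\{(0,0)\}$ (using $a=P(0)\neq 0$). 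For $u=-b$ the second relation gives $x=v$ directly.

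Conversely, if both cofactor relations hold and $t\neq1$, then $E_1=E_2=0$. The case $t=1$ cannot occur, because the sum of the two cofactors at $t=1$ is exactly $P(x)$. That is the only place the hypothesis on $P$ enters the affine part. Surjectivity onto the $q^2$ affine points then gives bijectivity by counting. (The paper additionally checks $S_X\cap S_Z=\emptyset$ via Lemma~\ref{lem:auxiliary}, which rests on the Ore-type Lemma~\ref{lem_ore_2}.)

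\textbf{The line $h=0$ and the converse are repairable.} Your parametrization $(1,s^{\sigma+1},s)$ together with $(0,1,0)$ is sound; it is the paper's chart $z=1$, $y=x^{-\sigma}$ with $s=1/x$. However, the computations you anticipate are wrong. In fact
\[
g(1,s^{\sigma+1},s)=s^{\sigma}f(1,s^{\sigma+1},s),\qquad f(1,s^{\sigma+1},s)=s^{\sigma^2+\sigma+1}P(1/s)\quad (s\neq 0),
\]
with $f=1$ at $s=0$. So $(1,s^{\sigma+1},s)$ is mapped to $(1,s^{\sigma},0)$, i.e.\ $f/g=s^{-\sigma}$, not something of the shape $s+(\text{unit})^{-1}$. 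Also $F(0,1,0)=(0,a,0)$, not $(a,a,0)$. These identities give \eqref{eq_PP1}, \eqref{eq_PP2} and the converse at once: a root $x_0\neq0$ of $P$ gives $F(x_0,x_0^{-\sigma},1)=\U 0$, and $x_0=0$ forces $a=0$, so $F(0,1,0)=\U 0$.
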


\begin{proof}
We need to verify Equations \eqref{eq_PP1}--\eqref{eq_PP3}.
Obviously, $F(0,0,0) = (0,0,0)$.
First assume $y \in \F_q$ is a solution to $x^{\sigma^2+\sigma+1}+c^{\sigma^2}x^{\sigma+1}+b^\sigma x+a=0$. If $y=0$ then necessarily $a=0$ and $F(0,1,0)=(0,0,0)$, so $\overline{F}$ does not permute $\PN{2}(\F_q)$. If $y\neq 0$ then $F(y,1/y^\sigma,1)=(0,0,0)$ giving the same contradiction. So let us for the rest of the proof assume that $x^{\sigma^2+\sigma+1}+c^{\sigma^2}x^{\sigma+1}+b^\sigma x+a=0$ has no solution $x \in \F_q$.

We first handle the case where two of $x,y,z$ vanish.
\begin{enumerate}
\item $y = z = 0$ implies $F(1,0,0) = (1,0,0)$.
\item $x = z = 0$ implies $F(0,1,0) = (0,a,0)$.
\item $x = y = 0$ implies $F(0,0,1) = (0,0,1)$,
\end{enumerate}
where (in $\PN{2}(\F_q)$) we have $(0,a,0) \sim (0,1,0)$. \\

\textbf{Case $h(x,y,z) = 0$:}

(i) If $z = 0$ then either $x = 0$ or $y = 0$ which were handled just before.

(ii) Let $z=1$. We consider the coordinates of $F(x,y,1)$:
\begin{align*}
f(x,y,1) &= x^{\sigma+1}  + ay^{\sigma} + bx^{\sigma}y + cx^{\sigma},\\
g(x,y,1) &= ay^{\sigma+1} +  x   + by    + cx^{\sigma}y,\\
h(x,y,1) &= 1 - x^{\sigma}y.
\end{align*}
We have $1 - x^{\sigma}y = 0$. Thus $xy \ne 0$ and $y = 1/x^{\sigma}$. Now
we want
\[
\left\{ \frac{f(x,y,1)}{g(x,y,1)} \ : \ h(x,y,1) = 0 \right\} = \F_q^\times,
\]
since $f(0,y,0)/g(0,y,0) = 0$ and 
      $f(x,0,0)/g(x,0,0) = \infty$.
Now
\[
\frac{f(x,1/x^{\sigma},1)}{g(x,1/x^{\sigma},1)} = \frac{ x^{\sigma^2+\sigma+1} + cx^{\sigma^2+\sigma} + bx^{\sigma^2} + a}{ x^{\sigma^2}  }
									\frac{ x^{\sigma^2+\sigma}}{ x^{\sigma^2+\sigma+1} + cx^{\sigma^2+\sigma} + bx^{\sigma^2} + a}
\]
permutes $\F_q^\times$ for $x \in \F_q^\times$ since $ x^{\sigma^2+\sigma+1} + cx^{\sigma^2+\sigma} + bx^{\sigma^2} + a \ne 0$ for all $x \in \F_q$. We conclude that 
\[
\{ f(x,y,z)/g(x,y,z) \ : \ h(x,y,z) = 0 \} = \F_q \cup \{\infty\},
\]
proving Eq.~\eqref{eq_PP2}. We also see that $f(x,y,z)=g(x,y,z)=0$ does not occur for $(x,y,z)\neq (0,0,0)$, proving Eq.~\eqref{eq_PP1}.

\textbf{Case $h(x,y,z) \neq 0$:}

We have $h(x,y,z)=z^{\sigma+1} -x^{\sigma} y \ne 0$. We will prove Eq.~\eqref{eq_PP3}, i.e.
\[
	\left\{\left(\frac{f(x,y,z)}{h(x,y,z)},\frac{g(x,y,z)}{h(x,y,z)}\right) 
	\colon  h(x,y,z) \neq 0 \right\}=\F_q \times \F_q.
\]
First recall that the case $x=y=0$, $z=1$ yields the element $(0,0)$ in the set above. Using the homogeneity of $F$, it suffices to consider the cases $z=0$ and $z=1$.

(i) If $z = 0$, then $xy \ne 0$. Now,
\begin{align*}
f(x,1,0) &= x^{\sigma+1} + bx^{\sigma},\\
g(x,1,0) &= a + cx^{\sigma},\\ 
h(x,1,0) &= -x^{\sigma}.
\end{align*}
Then the set
\begin{align*}
S_Z &= 
\left\{ \left(\frac{f(x,1,0)} {h(x,1,0)} ,\frac{g(x,1,0)}{h(x,1,0)}  \right) \ : \ 
h(x,1,0) =1 \right\} \\&= 
\left\{ \left(-u-b,-c-\frac{a}{u^{\sigma}} \right) \ : \ u \in \F_q^\times \right\}=\left\{ \left(u-b,\frac{a}{u^{\sigma}}-c \right) \ : \ u \in \F_q^\times \right\}
\end{align*}
collects the $q-1$ distinct pairs belonging to this case.

(ii) Let $z=1$ and 
\begin{align*}
f(x,y,1) &= x^{\sigma+1}  + ay^{\sigma} + bx^{\sigma}y + cx^{\sigma},\\
g(x,y,1) &= ay^{\sigma+1} +  x   + by    + cx^{\sigma}y,\\
h(x,y,1) &= 1 - x^{\sigma}y.
\end{align*}

It remains to show that the fractions
\begin{align}
\frac{f(x,y,1)}{h(x,y,1)} &= u\label{eq:quotient1}\\
\frac{g(x,y,1)}{h(x,y,1)} &= v \label{eq:quotient2}
\end{align}
have a common solution (in $(x,y)$) for all $(u,v) \in (\F_q \times \F_q) \setminus (S_Z\cup \{(0,0)\})$.
%then $F$ permutes $\PN{2}(\F_q)$ by the pigeonhole principle.
%Indeed, note that there are $q^2-q$ such $(u,v)$ pairs and the number of
%$(x,y) \in \F_q \times \F_q$ such that $h(x,y,1) = 0$ is also 
%$q^2-q+1$ since $x^{\sigma}y = 1$ has solutions for $(x,\frac{1}{x^{\sigma}})$
%whenever $x \in \F_q^\times$. 

Let us start with the case $x=0$, $y\neq 0$  (recall the case $x=y=0$ was already covered in the beginning). For $x=0$, the equations
simplify to
\begin{align*}
\frac{f(0,y,1)}{h(0,y,1)} &=ay^{\sigma}=u \\
\frac{g(0,y,1)}{h(0,y,1)} &=ay^{\sigma+1}+by = v.
\end{align*}
Let $S_X=\{(au^{\sigma},au^{\sigma+1}+bu) \colon u \in \F_q^\times\}$ collect these $q-1$ distinct solutions. 

%Similarly for $y=0$:
%\begin{align*}
%x^{\sigma+1}+cx^{\sigma}=&u \\
%x &= v.
%\end{align*}
%So $S_Y=\{(x^{\sigma+1}+cx^{\sigma},x) \colon x \in \F_q^\times\}$. We will thus assume $xy\neq 0$. 

We prove that $S_Z \cap S_X = \emptyset$. Assume that $(s_1,s_2) \in S_Z \cap S_X$. Then $s_1=ay^{\sigma}=x-b$ and $s_2=a/x^{\sigma}-c=ay^{\sigma+1}+by$. This yields $y^{\sigma}=\frac{x-b}{a}$. The equation in the second coordinate is equivalent to
\[a^{\sigma}-c^{\sigma}x^{\sigma^2} = a^{\sigma}y^{\sigma^2+\sigma}x^{\sigma^2}+b^{\sigma}y^{\sigma}x^{\sigma^2},\]
and after eliminating $y$ and multiplying by $a$ we get
\[x^{\sigma^2+\sigma+1}-bx^{\sigma^2+\sigma}+ac^{\sigma}x^{\sigma^2}-a^{\sigma+1}=0.\]
This equation has no solution by Lemma~\ref{lem:auxiliary}. We conclude that $S_Z \cap S_X=\emptyset$.

%if and only if $y^{\sigma^2+\sigma+1} = 1/a^{\sigma}$ which is impossible;
%\item that $S_Z \cap S_Y = \emptyset$ since for $y \ne 0$,
%\[
%\frac{a}{(y^{\sigma+1})^{\sigma}} = y
%\]
%if and only if $y^{\sigma^2+\sigma+1} = a$ which is impossible; and finally
%\item that $S_X \cap S_Y = \emptyset$ since for $y \ne 0$,
%$(ay^{\sigma+1})^{\sigma+1} = ay^{\sigma}$ if and only if $y^{\sigma^2+\sigma+1} = 1/a^{\sigma}$ 
%which is impossible.
%\end{itemize}

We now consider the case $x \neq 0$. Now, the two equations Eq.~\eqref{eq:quotient1} and Eq.~\eqref{eq:quotient2} become
\begin{align}
& x^{\sigma+1} + ay^{\sigma} + bx^{\sigma}y+cx^{\sigma}+ux^{\sigma}y    - u   = 0, \label{eq:beforetrans}\\
&ay^{\sigma+1} +  x   + by + cx^{\sigma}y+vx^{\sigma}y    - v   = 0.\label{eq:beforetrans2}
\end{align}

Set $t=yx^\sigma$ and eliminate $y$ from  Eqs.~\eqref{eq:beforetrans} and~\eqref{eq:beforetrans2}. Note that $t$ ranges in $\F_q\setminus \{1\}$ since $t=1$ is equivalent to $x^{\sigma}y=1$ and thus $h(x,y,z)=0$.  So for each pair
$(u,v) \in (\F_q \times \F_q) \setminus  (S_X \cup S_Z\cup \{(0,0)\})$, the equations
\begin{align}
& x^{\sigma^2+\sigma+1} + at^{\sigma}     + btx^{\sigma^2}+cx^{\sigma^2+\sigma}+x^{\sigma^2}   u (t-1) = 0, \label{eq_1}\\
& x^{\sigma^2+\sigma+1} + at^{\sigma+1} + btx^{\sigma^2}+ctx^{\sigma^2+\sigma}+x^{\sigma^2+\sigma} v (t-1) = 0. \label{eq_2}
\end{align}
should have a solution 
$(x,t) \in \F_q^\times \times (\F_q \setminus \{1\})$, which is then necessarily unique. From Eq.~\eqref{eq_1}, we immediately deduce that $t=1$ can never yield a solution since otherwise $x^{\sigma^2+\sigma+1}+cx^{\sigma^2+\sigma}+bx^{\sigma^2}+a=0$ has a solution, contradicting the assumption.
Now subtracting Eq.~\eqref{eq_1} from Eq.~\eqref{eq_2}, we get
\begin{equation} \label{eq:next1}
(1-t) (at^{\sigma} +cx^{\sigma^2+\sigma}+x^{\sigma^2+\sigma}v - x^{\sigma^2}u) = 0.
\end{equation}
Also, subtracting Eq.~\eqref{eq_2} from $z$ times Eq.~\eqref{eq_1},
we get
\begin{equation}\label{eq:next2}
(1-t) (x^{\sigma^2+\sigma+1} + btx^{\sigma^2}+tx^{\sigma^2}u - x^{\sigma^2+\sigma}v) = 0.
\end{equation}
Let us first check the case $u=-b$. Note that $(u,0)=(-b,0) \in S_X$, so $v\neq 0$ since $(u,v) \notin S_X$.
Then Eq.~\eqref{eq:next2} simplifies to $x^{\sigma^2+\sigma+1}-vx^{\sigma^2+\sigma}=0$, which has the unique solution $x=v \in \F_q^\times$. The unique solution for $y$ (and thus for $t$) can then be derived from Eq.~\eqref{eq:beforetrans}. So, we get unique solutions $(x,t) \in \F_q^\times \times (\F_q \setminus \{1\})$ for any $(-b,v)$.
 
So let us now consider the case $u \neq -b$. Then the two Eqs.~\eqref{eq:next1} and~\eqref{eq:next2} simplify to 
\begin{align*}
t^{\sigma} &= x^{\sigma^2} \frac{u -x^{\sigma}v-cx^{\sigma}}{a}, \\
t   &= x^{\sigma}     \frac{v-x}{u+b}    .
\end{align*}
Hence, we get
\[
	\frac{v^{\sigma}-x^{\sigma}}{u^{\sigma}+b^{\sigma}} = \frac{u -x^{\sigma}v-cx^{\sigma}}{a},
\]
which implies
\begin{equation}
	x^{\sigma} (u^{\sigma}v +b^{\sigma}v+u^{\sigma}c+b^{\sigma}c- a) = u^{\sigma+1} +b^{\sigma}u- v^{\sigma}a.
\label{eq:end}
\end{equation}

We investigate when $u^{\sigma+1} +b^{\sigma}u- v^{\sigma}a=0$. For a fixed $u$, there is a unique solution $v$, so there are in total $q-1$ solutions since we exclude the case $u=-b$. On the other hand, a simple calculation yields that all $(u,v) \in S_X$ satisfy the equation, so $(0,0)$ and $S_X \setminus \{(-b,0)\}$ are all the solutions. Similarly, $u^{\sigma}v +b^{\sigma}v+u^{\sigma}c+b^{\sigma}c- a=0$ has in total $q-1$ solutions which are all covered by $(u,v) \in S_Z$.  
%The case $u^{\sigma}v +b^{\sigma}v+u^{\sigma}c+b^{\sigma}c- a=0$ corresponds to $(u,v) \in S_Z$ and
%$u^{\sigma+1} +b^{\sigma}u- v^{\sigma}a=0$ corresponds to    $(u,v) \in S_X$, 
So we can uniquely solve for $x \in \F_q^\times$ in Eq.~\eqref{eq:end} via
\[
	x^{\sigma} = \frac{u^{\sigma+1} +b^{\sigma}u- v^{\sigma}a}{u^{\sigma}v +b^{\sigma}v+u^{\sigma}c+b^{\sigma}c- a},
\]
so we find a solution $x \in \F_q^\times$ for all $(u,v) \in (\F_q \times \F_q) \setminus (S_X \cup S_Z \cup \{(0,0)\})$.
\end{proof}

%%%%%%%%%%%%%%%%%%%%%%%%%%%%%%%%%%%%%%%%%%%%%%%%%%%%%%%%%%%%%%%%%%%%%%%%%%%%%%%%%%%%%%

\section{Inequivalence to monomials and known semifields}\label{sec_ineq}

As our second result, we will show that the planar functions 
we present in Theorem~\ref{thm_main_v2} are inequivalent to 
Dembowski-Ostrom monomials and that the finite semifields 
found in Theorem~\ref{thm_main} contain all known infinite 
families of semifields with $q^3$ elements in 
arbitrary characteristic; these are the Bierbrauer and the 
Zha-Kyureghyan-Wang semifields 
(see Section~\ref{subs:comm_semifields}). We see that both 
these families are contained as very special cases in our 
family. The goal of this section is to prove the following 
comprehensive inequivalence result.

\begin{theorem} \label{thm:inequiv}
Let $F \in \SQH(3,q)\setminus \QH(3,q)$ be a map 
from Theorem~\ref{thm_main_v2} with companion field 
automorphism $\sigma \ne \id$ and let $q=p^m$. Then,
\begin{enumerate}
\item $F$ is not $\GL(3,q)$-equivalent to any monomial.

\item If $F$ is planar and $m>2$, then $F$ is not $p$-equivalent 
to any monomial. Equivalently, the pre-semifield constructed by 
$F$ is not isotopic to the finite field or a twisted field.

\item The family of semifields constructed in Theorem~\ref{thm_main} contains 
\begin{enumerate}
\item all Zha-Kyureghyan-Wang semifields, and
\item all Bierbrauer semifields.
\end{enumerate}
		
\item The family of semifields constructed in 
Theorem~\ref{thm_main} contains	semifields that are not contained 
in any other known family of commutative semifields. 
\end{enumerate}
\end{theorem}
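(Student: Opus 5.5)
Part (i) will be handled with a $\GL(3,q)$-invariant of semiquadratic maps, namely the $\F_q$-span of the matrices $N^{(1)},N^{(2)},N^{(3)}$ from the matrix description, $S_i(\U x)=\U x N^{(i)}\U x^{\sigma T}$. Left composition with $L_1$ replaces this span by itself. Right composition with $L_2$ replaces every matrix by $L_2^T N L_2^{\sigma}$. So the invariant is a $3$-dimensional space of $3\times 3$ matrices up to this twisted congruence, and its rank distribution is preserved.

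For a monomial $X^{q^i\sigma+q^j}$, written in the basis coming from $\F_{q^3}$, the span is $\{\Tr(\lambda X^{q^i\sigma}Y^{q^j})\}$. Such a form has the shape $M_\lambda=D\,\diag(\lambda,\lambda^q,\lambda^{q^2})P$ after a fixed change of basis over $\F_{q^3}$, with $P$ a permutation matrix. Hence every nonzero element of the span has rank $3$.

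For our $F$, some member of the span is singular. For example, $N^{(3)}$ represents $z^{\sigma+1}-x^\sigma y$, and a suitable linear combination $\alpha N^{(1)}+\beta N^{(2)}+\gamma N^{(3)}$ drops rank. I will pick the combination explicitly: take $\gamma$ so that a row vanishes, which uses the $x^\sigma y$ term shared by $F_1$ and $F_3$. Then monomial-equivalence is impossible.

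There is an obstacle here: when $\sigma\neq\id$, the monomials $X^{\sigma+q^j}$ with the exponent structure $q^i\sigma$ are still $\sigma$-semiquadratic, so I must check all of them. The rank argument is uniform in $i,j$, so this should be fine.

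Part (ii) needs a $p$-equivalence invariant, since $F$ now lives in $\QH(3m,p)$. I will use the nuclei, computed in the appendix, because isotopic semifields have isomorphic nuclei. Twisted fields of order $q^3=p^{3m}$ coming from $X^{\sigma'+1}$ have middle nucleus and left nucleus of a specific size $p^{\gcd(\cdot)}$, which can be read off Albert's formulas. I will compare these with the appendix's nuclei of our semifields, which I expect to be $\F_{q}$ or $\F_{p^{\gcd(k,m)}}$ as middle nucleus with a smaller left nucleus. The hypothesis $m>2$ is exactly what rules out coincidental equalities of orders.

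The real obstacle in (ii) is the case where the nuclei happen to match those of some twisted field. For that case I plan a finer invariant: the number of $\F_p$-subspaces, or the autotopism group order via $\Autt$. Alternatively I can reduce to (i) by showing that any isotopism must be $\F_q$-semilinear, because it preserves the nucleus $\F_q$, and then apply a semilinear version of the rank argument.

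For (iii) I will write the Zha–Kyureghyan–Wang and Bierbrauer planar functions, which are binomials/trinomials over $\F_{q^3}$, in coordinates of $\F_q^3$ with a suitable basis. In these coordinates they match our $F$ for special $(a,b,c)$, with $b=c=0$ giving one family. The condition on the original families then becomes the non-existence of roots of Eq.~\eqref{eq:condition}, via Lemma~\ref{lem:auxiliary}.

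For (iv) I will exhibit parameters where the nuclei, or the count from (ii), differ from every entry of the known tables in \cite{golouglu2023exponential}. If needed, I will confirm with a small computed example, say $q=27$ or $q=125$.
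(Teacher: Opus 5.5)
Your argument for (i) is correct, and it differs from the paper's. You use the rank of the coefficient matrices as a $\GL(3,q)$-invariant; these matrices are uniquely determined by the function because $\sigma\neq\id$. Every nonzero form in the span of a monomial $X^{\sigma q^i+1}$ has rank $3$. The paper instead notes that $G(xv_1+yv_2+zv_3)$ contains $xy^\sigma$ with coefficient $v_1v_2^{\sigma q^i}\neq 0$, while no $xy^\sigma$ term occurs in $f,g,h$. You need no linear combination: $N^{(3)}$ already has a zero $x$-row, so it has rank $2$.

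\textbf{The gap is in (ii).} The appendix shows that for every $\P_{\sigma,a,b,c}$ one has $\mathcal N_l=\mathcal N_m=\mathcal N_r=\mathcal C\cong\F_{p^d}$ with $d=\gcd(k,m)$. There is no $\F_q$-nucleus and no mismatch between left and middle nucleus. The commutative twisted field from $X^{p^{k'}+1}$ on $\F_{p^{3m}}$ has all nuclei equal to $\F_{p^{\gcd(k',3m)}}$. So the invariants agree whenever $\gcd(k',3m)=\gcd(k,m)$, for example $m=3$, $k=k'=1$. The case you call the obstacle is therefore the generic one, and your fallbacks do not close it:
\begin{itemize}
\item you have no handle on $|\Autt(\S_F)|$, since one only sees a cyclic subgroup of order $q-1$;
\item the subspace count is unspecified;
\item the reduction to (i) rests on the false premise that an isotopism preserves an $\F_q$-nucleus. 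Preserving $\F_{p^d}$ gives no $\F_q$-semilinearity.
\end{itemize}
Also, $m>2$ has nothing to do with orders of nuclei.

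The missing idea is to detect the $\F_q$-structure inside the autotopism group.
\begin{itemize}
\item The set $\{(\alpha^{\sigma+1}I,\alpha I,\alpha I):\alpha\in\F_q^\times\}$ is a cyclic subgroup of $\Autt(\S_F)$ of order $q-1$.
\item Take a primitive prime divisor $r$ of $p^m-1$. It exists by Zsigmondy's theorem, and this is where $m>2$ enters.
\item An isotopism conjugates autotopism groups, and the autotopism group of a commutative twisted field is known explicitly.
\item Since $\gcd(r,3m)=1$, the Sylow $r$-subgroup must be conjugated onto $\{(a^{p^{k'}+1}x,ax,ax):a\in T\}$, where $T$ is the Sylow $r$-subgroup of $\F_q^\times$.
\item Hence the second and third components of the isotopism normalize multiplication by $T$, so they lie in $\GammaL(3,q)$.
\item Field automorphisms are autotopisms of the twisted field, so they can be absorbed. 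This leaves a $\GL(3,q)$-isotopism, contradicting (i). Your semilinear-robust rank argument would fit well at this last step.
\end{itemize}

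For (iii) you follow the paper's route but supply none of its content.
\begin{itemize}
\item For ZKW, you must rescale $\omega$ within its coset of $(r-q^2)$-th powers so that $\{1,\omega,\omega^{qr+1}\}$ is an $\F_q$-basis. Then define $a,b,c$ by making $\omega$ a root of $X^{(qr)^2+qr+1}+c^{r^2}X^{qr+1}+b^rX+a$, and write both bases down explicitly.
\item For BB, normalize $\omega^3\in\F_q$ and use the basis $\{1,\omega,\omega^2\}$, which gives $b=c=0$.
\item The no-root condition needs no separate check, since planarity is preserved under equivalence.
\end{itemize}

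For (iv), nuclei again cannot separate, because ZKW and BB lie in your family by (iii). The paper instead takes $p\equiv 2\pmod 3$ and $m=3$. Then no BB or ZKW semifield of order $p^9$ exists, and twisted fields are excluded by (ii). Your suggested test case $q=27$ is poorly chosen, since characteristic $3$ carries further known commutative semifields of order $3^9$.
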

\begin{remark}
\begin{enumerate}
\item When we say that ``a family $\mathcal{F}_1$ of semifields contains 
a member of a family $\mathcal{F}_2$,'' we mean that $\mathcal{F}_1$ 
contains a member that is isotopic to the member in the family 
$\mathcal{F}_2$.

\item Recall that we defined the notion of $\GL(n,q)$-equivalence for 
two maps $Q_1,Q_2 \in \SQH(n,q)$ 
that maps $Q_1,Q_2 : \F_q^n \to \F_q^n$ where $q = p^m$ as
\[
	Q_1 \approx Q_2 \iff Q_2 = L_1 \circ Q_1 \circ L_2
\]
for some $L_1,L_2 \in \GL(n,q)$. 
A function $F=(f,g,h)$ from Theorem~\ref{thm_main_v2} is $\GL(3,q)$-equivalent to a function $G$ defined via a polynomial $G(X)\in \F_{q^3}[X]/(X^{q^3}-X)$ if and only if there exist two  $\F_q$-bases of $\F_q^3$, say $\{v_1,v_2,v_3\}$ and $\{u_1,u_2,u_3\}$ such that $G(xv_1+yv_2+zv_3)=f(x,y,z)u_1+g(x,y,z)u_2+h(x,y,z)u_3$ (as polynomials in $\F_{q^3}[x,y,z]/(x^q-x,y^q-y,z^q-z)$) since composition with a non-singular linear transformation  corresponds to a change of basis. 
Throughout the proof, let $f,g,h \colon \F_q^3 \rightarrow \F_q$ be the functions from Theorem~\ref{thm_main_v2}.
\end{enumerate}
\end{remark}
\begin{proof}[Proof of Theorem~\ref{thm:inequiv}] 
\begin{enumerate}
\item The only monomials (up to equivalence) we have to consider 
are $G(X) \in \{ X^{\sigma+1}, X^{\sigma q+1}, X^{\sigma q^2+1} \}$. 
We only prove here the first case. The proofs for others are essentially 
the same.

Assume that $G(xv_1+yv_2+zv_3)=f(x,y,z)u_1+g(x,y,z)u_2+h(x,y,z)u_3$  where 
$\{v_1,v_2,v_3\}$ and $\{u_1,u_2,u_3\}$ are $\F_q$-bases of $\F_q^3$. 
Calculating $G(xv_1+yv_2+zv_3)=(xv_1+yv_2+zv_3)^{\sigma+1}$, we get a 
non-vanishing monomial $xy^\sigma v_1v_2^\sigma$, but 
$f(x,y,z)u_1+g(x,y,z)u_2+h(x,y,z)u_3$ does not contain a monomial of the form 
$xy^\sigma$, yielding a contradiction. We conclude that the functions from 
Theorem~\ref{thm_main_v2} are $\GL(3,q)$-inequivalent to monomials.

\item  Firstly, $F\not\approx_p X^2$ since otherwise the semifield 
constructed by $F$ would be isotopic to the finite field, which is clearly not 
the case (for instance, it is not associative which can be easily seen, and  
also follows from the calculation of the nuclei in Appendix~\ref{sec_nuclei}). 
So assume $F \approx_p X^{p^k+1}$. 
By~\cite[Theorem 2.6.]{coulterhenderson} and the calculation of the nuclei 
in Appendix~\ref{sec_nuclei}, $F \approx_p X^{p^k+1}$ if and only if the associated 
(pre)-semifields are isotopic. We will thus show that the associated semifields 
are not isotopic. The autotopism group of the twisted field corresponding to 
$X^{p^k+1} \in \SQH(1,p^{3m})$ is known, 
see~\cite[Theorem 1.1.]{biliotti1999collineation}, it is
\[
A_k=\{(a^{p^k+1}x^{p^j},\pm ax^{p^j},\pm ax^{p^j}) \colon a \in \F_{q^{3}}^\times, 0 \leq j \leq 3m-1\}.
\]
We now identify some autotopisms of $\S_F$, the (pre)-semifield associated to $F$. It is easy to see that with
\[
     N_{\alpha}=\diag(\alpha^{\sigma+1},\alpha^{\sigma+1},\alpha^{\sigma+1}) \in \GL(3,q),
\;\; L_{\alpha}=\diag(\alpha,\alpha,\alpha) \in \GL(3,q),
\]
we have $(N_{\alpha},L_{\alpha},L_{\alpha})\in \Autt(\S_F)$ for any $\alpha \in \F_{q}^\times$. 
Let $C$ be the subgroup of $\Autt(\S_F)$ that comprises these autotopisms for all  
$\alpha \in \F_q^\times$, i.e., $C$ is a cyclic group of order $q-1$. Let $r$ be a 
$p$-primitive prime divisor that divides $q-1=p^m-1$ but does not divide $p^i-1$ 
for any $1\leq i<m$. Such an $r$ is guaranteed to exist by Zsygmondy's theorem
~\cite[Chapter IX., Theorem 8.3.]{HuppertII} if $m>2$ and $(p,m)\neq (2,6)$, 
and it is immediate that $r\equiv 1 \pmod m$, in particular $r \geq m+1$. Let $R$ be the unique Sylow $r$-subgroup of $C$. 
It is easy to see that autotopism groups of isotopic semifields are conjugate in 
the general linear group, where the conjugacy is actualized by the isotopism
~\cite[Lemma 5.1.]{golouglu2023exponential}. So assume the autotopism groups are 
conjugate via $\gamma \in \GL(3m,p)^3$. Since $\gcd(r,3m)=1$, the only subgroup of 
$A_k$ that can be conjugate to $C$ in the general linear group is 
\[
H_k=\{(a^{p^k+1}x, ax, ax) \colon a \in T\},
\]
where $T$ is the unique Sylow $r$-subgroup of $\F_q^\times$. But then the second and third components of $\gamma$ are necessarily in the normalizer of the set $\{x \mapsto ax \colon a \in T\}$ in $\GL(3m,p)$. This normalizer is exactly $\GammaL(3,q)$, which follows in the same vein as in~\cite[Lemma 5.7.]{golouglu2023exponential}. It is thus easy to see that $\gamma \in \GammaL(3,q)^3$. But since any field automorphism $\tau$ of $\F_{p^{3m}}$ yields an autotopism $(\tau,\tau,\tau)$ of the twisted field (see the definition of $A_k$ for $a=1$), this means that there has to be isotopism in $\GL(3,q)^3$. This implies $F \approx X^{p^k+1}$, contradicting Part (i). 
%\end{proposition}
\end{enumerate}
\end{proof}

Before proving Parts (iii) and (iv) we stop to explain the semifields
listed in the theorem.
Both the Zha-Kyureghyan-Wang (ZKW)~\cite{ZKW} and the Bierbrauer (BB)~\cite{Bierbrauer10}  semifields are defined through polarizations of planar functions. It is thus enough to show that all ZKW and BB planar functions are $\GL(3,q)$-equivalent to a function in Theorem~\ref{thm_main}.  
The ZKW and BB planar functions are
defined by the Dembowski-Ostrom binomials
\[
F_{r,\omega}=X^{r+1}-\omega^{q-1}X^{qr+q^2} \in \F_{q^3}[X]
\]

where $\omega$ generates $\F_{q^3}^\times$ and $r=p^k$, $q=p^m$. 
Let $d=\gcd(k,m)$ and $s=p^{d}$, that is, $\F_s=\F_q \cap \F_r$. 

These functions are planar if $m/d$ is odd and either
\begin{enumerate}[(i)]
    \item $m/d + k/d \equiv 0 \pmod 3$ (ZKW case), or
    \item  $q \equiv r \equiv 1 \pmod 3$ (BB case).
\end{enumerate}
For convenience, we  exclude the case where $m/d + k/d \equiv 0 \pmod 3$ 
from the BB case, so that the two cases are disjoint. As the following 
theorem shows, both families are special subfamilies of the family we 
constructed in Theorem~\ref{thm_main_v2}. This immediately proves Part (iii) of Theorem~\ref{thm:inequiv}.

\begin{theorem} \label{thm:ZKWBB}
Let $F_{r,\omega}$ be a ZKW or BB planar function on $\F_{q^3}$ as defined 
above. $F_{r,\omega}$ is $\GL(3,q)$-equivalent to a function $F$ on $\F_q^3$ from 
Theorem~\ref{thm_main_v2}. In particular, if $F_{r,\omega}$ is a BB function, then $F_{r,\omega}$ is equivalent to a function $F$ on $\F_q^3$ from 
Theorem~\ref{thm_main_v2} with $b=c=0$.
\end{theorem}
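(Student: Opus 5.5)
We must find bases $\{v_1,v_2,v_3\}$ of $\F_{q^3}$ over $\F_q$ and an $\F_q$-linear map on the output side such that
\[
F_{r,\omega}(xv_1+yv_2+zv_3)=f(x,y,z)u_1+g(x,y,z)u_2+h(x,y,z)u_3 .
\]
The plan is to work in $\F_{q^3}$ with the Frobenius $X\mapsto X^q$ playing the role of a cyclic shift, and to take $\sigma=r$ on the coordinate side. Because $x,y,z\in\F_q$, we have $x^{qr}=x^r$ and $x^{q^2}=x$. Expanding
\[
F_{r,\omega}(X)=X^{r+1}-\omega^{q-1}X^{qr+q^2}
\]
at $X=xv_1+yv_2+zv_3$ gives
\[
\sum_{i,j} x_i^{r}x_j\bigl(v_i^{r}v_j-\omega^{q-1}v_i^{qr}v_j^{q^2}\bigr).
\]
So the nine coefficients of $x_i^\sigma x_j$ are
\[
C_{ij}=v_i^{r}v_j-\omega^{q-1}v_i^{qr}v_j^{q^2}\in\F_{q^3}.
\]

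The target has a specific shape. In $(f,g,h)$ the monomials $y^\sigma x$, $z^\sigma y$ and $x^\sigma z$ that are absent from the formulas must have coefficient $0$, and the remaining coefficients must span a rank-3 configuration matching the pattern:
\[
(1,0,0)\ \text{for}\ x^{\sigma+1},\quad (0,a,0)\ \text{for}\ y^{\sigma+1},\quad (0,0,1)\ \text{for}\ z^{\sigma+1},
\]
and so on. Concretely, I would choose the basis to kill the cross terms $C_{21}=C_{32}=C_{13}=0$. The natural attempt is a basis built from a normal element,
\[
v_1=\theta,\qquad v_2=\lambda\theta^{q},\qquad v_3=\mu\theta^{q^2},
\]
with $\theta$ related to $\omega$ so that $\omega^{q-1}=\theta^{q-1}$-type identities turn $v_i^{qr}v_j^{q^2}$ into $v_i^rv_j$ times a controlled scalar. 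This would make $C_{ij}$ vanish exactly for the three prescribed index pairs. Then I would set
\[
u_1=C_{11},\quad u_3=C_{33},
\]
and check that these, together with a suitable $u_2$, form a basis. All other $C_{ij}$ would then be expressed in $u_1,u_2,u_3$, which reads off $a,b,c$. The relations $C_{31}=-u_3$ (from the term $-x^\sigma y$ in $h$) and $C_{23}=u_2$ (from $z^\sigma x$ in $g$) fix the normalizations of $\lambda$ and $\mu$.

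In the BB case, the extra condition $q\equiv r\equiv1\pmod 3$ provides cube roots of unity in $\F_s$. I expect that choosing $\theta$ with $\theta^{q^2+q+1}$ a suitable power of $\omega$ makes the coefficients of $x^\sigma y$ and $x^\sigma z$ land only in the $u_3$ and $u_2$ directions respectively, forcing $b=c=0$. In the ZKW case, $m/d+k/d\equiv0\pmod3$ will instead produce nonzero $b,c$ in general.

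As a sanity check, Theorem~\ref{thm_main_v2} combined with the planarity of $F_{r,\omega}$ and Theorem~\ref{thm_sqh} forces the cubic-type equation to have no roots. I would use this to verify the computed $a,b,c$ rather than to prove anything.

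The main obstacle is the first step: finding explicit $\theta,\lambda,\mu$ that simultaneously kill three cross terms and realize the exact coefficient pattern, including the cyclic structure in which $h$ has only two monomials. If the normal-basis ansatz fails, I would instead solve for $v_2,v_3$ given $v_1$ from the linear conditions $C_{13}=0$ and $C_{21}=0$. Each of these is $\sigma$-semilinear in one unknown, so it can be solved explicitly, after which I would verify the remaining condition $C_{32}=0$ using the arithmetic hypotheses on $m/d$ and $k/d$.
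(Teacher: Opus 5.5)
There is a genuine gap: the target pattern is misread, and this breaks the ZKW case. In $(f,g,h)$ the only monomial that never occurs is $y^\sigma x$. The monomials $z^\sigma y$ and $x^\sigma z$ do occur, with coefficient vectors $bu_2$ and $cu_1$. Your labels are also shifted: the term $-x^\sigma y$ of $h$ belongs to $C_{12}=bu_1+cu_2-u_3$, and $z^\sigma x$ belongs to $C_{31}=u_2$, while $C_{23}=au_1$.

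So imposing $C_{21}=C_{32}=C_{13}=0$ forces $b=c=0$. This contradicts your own remark about ZKW, and for ZKW functions the system has no solution at all. For nonzero $v_i,v_j$ one has $C_{ij}=0$ iff $(\omega v_i^rv_j^{q+1})^{q-1}=1$, i.e.\ $\omega v_i^rv_j^{q+1}\in\F_q^\times$. Multiplying the three conditions gives $\omega^3(v_1v_2v_3)^{r+q+1}\in\F_q^\times$. Hence, in the cyclic group $\F_{q^3}^\times/\F_q^\times$ of order $q^2+q+1$, the class of $\omega^3$ is an $(r+q+1)$-th power. It therefore lies in the subgroup of index $\gcd(r+q+1,q^2+q+1)=\gcd(r-q^2,q^2+q+1)$. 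Since $\omega$ is primitive, this forces $\gcd(r-q^2,q^2+q+1)\mid 3$. But $m/d+k/d\equiv 0\pmod 3$ gives $\gcd(r-q^2,q^3-1)=p^{\gcd(k+m,3m)}-1=s^3-1$, and then $\gcd(r-q^2,q^2+q+1)=s^2+s+1>3$.

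Concretely, take $p=5$, $m=5$, $k=1$. Then $\gcd(\sigma^2+\sigma+1,q-1)=\gcd(31,3124)=1$, so $x^{31}=-a$ always has a root. By Theorems~\ref{thm_sqh} and~\ref{thm_main_v2}, no planar member with $b=c=0$ exists there. Your fallback (solve $C_{21}=C_{13}=0$, then check $C_{32}=0$ ``using the arithmetic hypotheses'') fails precisely because of the ZKW hypothesis.

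What is missing for ZKW is a mechanism that produces nonzero $b,c$ while imposing only $C_{21}=0$. The paper proceeds in three steps:
\begin{enumerate}
\item Using $F_{r,\omega}\approx F_{r,\omega\gamma^{r-q^2}}$, it replaces $\omega$ within $\omega(\F_{q^3}^\times)^{r-q^2}$ so that $\{1,\omega,\omega^{qr+1}\}$ is an $\F_q$-basis. This needs its own argument, via the determinant $\Nm_{q^3/q}(x)\Tr_{q^3/q}(x^{r-1}-x^{r-q})$.
\item It defines $a,b,c\in\F_q$ by $\omega^{(qr)^2+qr+1}+c^{r^2}\omega^{qr+1}+b^r\omega+a=0$.
\item It takes $v_1=1$, $v_3=\omega^q+c$, $v_2=\omega^qv_3^{qr}+b$, and $u_1,u_2,u_3$ equal to $C_{11},C_{31},C_{33}$. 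The identity $v_2^r=-a/\omega$ then makes all nine coefficient equations hold.
\end{enumerate}
For BB your three-condition system is consistent, since there $\gcd(r-q^2,q^2+q+1)=3$, but you give no construction. The paper uses that gcd to reduce to $\omega^3\in\F_q$ and then computes in the Kummer basis $\{1,\omega,\omega^2\}$, not a normal basis.
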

\begin{proof}
We start with the ZKW case. 

We first prove that up to $\GL(3,q)$-equivalence, 
we can choose $\omega$ such that 
$\{1,\omega,\omega^{qr+1}\}$ is an 
$\F_q$-basis of $\F_{q^3}$.
Indeed, maps $X \mapsto \gamma X$ for 
$\gamma \in \F_{q^3}^\times$ and then 
scaling by $1/\gamma^{r+1}$ shows that $F_{r,\omega'}$ 
is equivalent to 
\begin{equation} \label{eq:scaling}
    X^{r+1}-\omega'^{q-1}\gamma^{qr+q^2-r-1}X^{qr+q^2}=X^{r+1}-(\omega' \gamma^{r-q^2})^{q-1}X^{qr+q^2}=F_{r,\omega' \gamma^{r-q^2}}.
\end{equation}

Let $H=(\F_{q^3}^\times)^{r-q^2}$ be the group of $(r-q^2)$-th powers. 
%We have $|H|=\frac{q^3-1}{\gcd(q^3-1,r-q^2)}$, and 
%$$\gcd(q^3-1,r-q^2)=\gcd(p^{3m}-1,p^{|k-2m|}-1)=p^{\gcd(3m,k-2m)}-1=p^{\gcd(3m,k+m)}-1=s^3-1$$
% using $m/d + k/d \equiv 0 \pmod 3$, so $|H|=\frac{q^3-1}{s^3-1}$. 
\begin{claim}
For every fixed generator $\omega'$ of $(\F_{q^3}^\times)$, 
there exists $\omega \in \omega'H$ such that 
$\{1,\omega,\omega^{qr+1}\}$ is a linearly independent set over $\F_q$.
\end{claim}
\begin{proof}
Let 
\begin{align*}
D(x)&:=\det\begin{pmatrix} 1 & x & x^{qr+1} \\ 1 & x^q & x^{q^2r+q} \\ 1 & x^{q^2} & x^{r+q^2} \end{pmatrix} \\
    &= x^{q^2+q+r}+x^{q^2r+q+1}+x^{qr+q^2+1}-x^{q^2r+q^2+q}-x^{q^2+r+1}-x^{qr+q+1}
\\
&=\Tr_{q^3/q} (x^{q^2+q+r}  - x^{q^2+r+1})\\
&= \Tr_{q^3/q} (\Nm_{q^3/q}(x) (x^{r-1} - x^{r-q}))\\
&= \Nm_{q^3/q}(x) \Tr_{q^3/q} (x^{r-1} - x^{r-q} ),
\end{align*} 
where $\Tr_{q^3/q}(x)=x+x^q+x^{q^2}$ and 
$\Nm_{q^3/q}(x)=x\cdot x^q \cdot x^{q^2}$ are 
the trace and norm maps, mapping $\F_{q^3}$ to $\F_q$.
Recall the fact that the trace map is $\F_q$-linear,
which was used in the last line.
If $\{1,\omega,\omega^{qr+1}\}$ is a linearly dependent set over $\F_q$
then $D(\omega)=0$ since a nontrivial linear combination 
$a+b\omega+c\omega^{qr+1}=0$ with $a,b,c \in \F_q$ implies that $(a,b,c)^T$ 
is in the kernel of the matrix above with $x=\omega$ observing that the rows 
are $q^i$-th powers of the first row for 
$i \in \{0,1,2\}$.

Assume to the contrary that $\{1,\omega,\omega^{qr+1}\}$ is linearly dependent
for every $\w \in \w'H$.
We must have $D(\omega' h)=0$ for all $h \in H = (\F_{q^3}^\times)^{qr-1}$ 
since $\gcd(qr-1,q^3-1)=\gcd(r-q^2,q^3-1)$.
This is equivalent to saying $D(\omega' x^{qr-1}) = 0$ for all $x \in \F_{q^3}$.
Thus, as a polynomial
\[
\Tr_{q^3/q} \left(\omega'^{r-1}x^{(qr-1)(r-1)}- \omega'^{r-q}x^{(qr-1)(r-q)} \right) 
\equiv 0 \pmod {x^{q^3}-x},
\]
A polynomial of the above form satisfies the above condition, i.e., 
$\Tr_{q^3/q}(ax^d-bx^e) \equiv 0 \pmod {x^{q^3}-x}$
only if $x^{dq^i} = x^{eq^j}$  or $x^{dq^i} = x^{dq^j}$
and in these cases one must also have $a^{q^i} = b^{q^j}$
or $a^{q^i} = a^{q^j}$ respectively.
Since $\omega'$ is primitive, we have 
${(\omega'^{r-1})}^{q^i}\neq \omega'^{r-q}$ for $i \in \{0,1,2\}$, 
since 
\begin{align*}
(r-1)q^i \equiv rq^i - q^i \equiv r-q \pmod{q^3-1}
\end{align*}
which is equivalent to 
\begin{align*}
 rq^i + q  \equiv r + q^i \pmod{q^3-1}
\end{align*}
implies $q = 1$ if $i = 0$ or $i = 1$, 
and leads to contradiction if $i = 2$.
So the polynomial can only vanish if both 
\[
(qr-1)(r-1)\equiv (qr-1)(r-1)q \pmod{q^3-1}
\]
and 
\[
(qr-1)(r-q)\equiv (qr-1)(r-q)q \pmod{q^3-1}.
\]
Expanding the first condition yields 
$qr^2+q^2r+1 \equiv q^2r^2+r+q \pmod{q^3-1}$. 
Thus since $r \ne 1$ and $q \ne 1$ then 
$q^2r^2 \equiv 1 \pmod {q^3-1}$ implying
either $r = q^2$ or $r^2 = q$. 
If $r^2=q$, then $2k=m$, so $d=k$ and $m/d=2$, violating the condition that $m/d$ is odd, so this case does not occur. We have thus proven the claim for all cases except if $r=q^2$. But if $r=q^2$, then $d=m$. So, $\omega^{qr}=\omega^{p^{m+k}}=\omega^{m(1+k/d)}$ and since $m/d+k/d\equiv 0 \pmod 3$, we have $\omega^{qr}=\omega$, so  $\{1,\omega,\omega^{qr+1}\}=\{1,\omega,\omega^{2}\}$ which is always a basis, proving the claim also in this case. 
\end{proof}

So assume from now on without loss of generality that $\{1,\omega,\omega^{qr+1}\}$ is an $\F_q$-basis of $\F_{q^3}$.

Pick $a,b,c \in \F_q$ such that $\omega$ is a root of $X^{(qr)^2+qr+1}+c^{r^2}X^{qr+1}+b^rX+a=0$. Clearly, such $a,b,c$ always (uniquely) exist if $\{1,\omega,\omega^{qr+1}\}$ is an $\F_q$-basis of $\F_{q^3}$. 

%If not, then $\omega^{qr+1}=A\omega + B$ for $A,B \in \F_q$, so $\omega^{qr}=A+B/\omega$, and $\omega^{(qr)^2}=A^r+B^r\omega/(A\omega+B)$. Together, we get $\omega^{(qr)^2+qr+1}=(A\omega+B)(A^r+B^r\omega/(A\omega+B))=)(A^{r+1}+B^r)\omega+A^rB$, and we get the desired solution for $c=0, b=A^{r+1}+B^r, a=A^rB$ also in this case.

Let $v_1=1$, $v_2 = \omega^{q+q^2r}+c^r\omega^q+b$, $v_3 = \omega^q + c$, so $v_2=\omega^qv_3^{qr}+b$.  Clearly, $\{v_1,v_2,v_3\}$ is an $\F_q$-basis of $\F_{q^3}$ since $\{1,\omega,\omega^{qr+1}\}$ is an $\F_q$-basis of $\F_{q^3}$. Let further $u_1=1-\omega^{q-1}$, $u_2=v_3^r-\omega^{q-1}v_3^{qr}$, $u_3=v_3^{r+1}-\omega^{q-1}v_3^{q^2+qr}$. We show that 
$$F_{r,\omega}(xv_1+yv_2+zv_3) = f(x,y,z)u_1+g(x,y,z)u_2+h(x,y,z)u_3$$
for $f,g,h$ as defined in Theorem~\ref{thm_main_v2}.

To verify this, we check the coefficients of all the monomials $x^{r+1},x^ry,x^rz,y^rx,y^{r+1},y^rz,z^rx,z^ry,z^{r+1}$, leading to the following equations.

\begin{minipage}{.5\linewidth}
  \begin{align}
		u_1&=1-\omega^{q-1}\label{eq:l1}\\
		bu_1+cu_2-u_3 &= v_2-\omega^{q-1}v_2^{q^2}\label{eq:l2}\\
		cu_1&=v_3-\omega^{q-1}v_3^{q^2} \label{eq:l3}\\
		0&=v_2^r-\omega^{q-1}v_2^{qr}\label{eq:l4}\\
		au_2&=v_2^{r+1}-\omega^{q-1}v_2^{qr+q^2}\label{eq:l5}
	\end{align}
\end{minipage}%
\begin{minipage}{.5\linewidth}
    \begin{align}
			au_1&=v_2^rv_3-\omega^{q-1}v_2^{qr}v_3^{q^2} \label{eq:r1}\\
			u_2&=v_3^r-\omega^{q-1}v_3^{qr}\label{eq:r2}\\
			bu_2&=v_2v_3^r-\omega^{q-1}v_2^{q^2}v_3^{qr}\label{eq:r3}\\
			u_3&=v_3^{r+1}-\omega^{q-1}v_3^{qr+q^2}.\label{eq:r4}
	\end{align}
\end{minipage}
Eqs.~\eqref{eq:l1}, \eqref{eq:r2}, \eqref{eq:r4} are immediately satisfied by definition of $u_1,u_2,u_3$.
All other equations are readily checked by the definitions as well under the observation that $v_2^r=\omega^{(qr)^2+qr}+c^{r^2}\omega^{qr}+b^r=-a/\omega$.
For instance the right hand side of Eq.~\eqref{eq:l5} reduces to 
	\[v_2^{r+1}-\omega^{q-1}v_2^{qr+q^2}=-\frac{a}{\omega}v_2+\omega^{q-1}\frac{a}{\omega^q}v_2^{q^2}=-\frac{a}{\omega}(\omega^qv_3^{qr}-\omega v_3^r),\]
which is precisely the left hand side of Eq.~\eqref{eq:l5}. The other equations are verified in the same manner.

Note that $\{u_1,u_2,u_3\}$ has to be a basis of $\F_{q^3}$ over $\F_q$, since otherwise the image set of $F_{r,\omega}$ is contained in a proper subspace of $\F_{s^3}$, which clearly contradicts the planar property. This concludes the ZKW case.

We now prove the BB case. Firstly, observe that for a fixed $r$, the function $F_{r,\omega}$ is equivalent to $F_{r,\omega C}$, where $C$ is a cube in $\F_{q^3}$. This follows from Eq.~\eqref{eq:scaling} and the observation that $\gcd(r-q^2,q^2+q+1)=3$ (see~\cite[Lemma 5]{Bierbrauer10}).
% so every cube in $\F_{q^3}$ can be expressed as $a^{r-q^2}$ for some $a \in \F_{q^3}$.
Since $\omega$ is a non-cube in $\F_{q^3}$, there are thus at most two different inequivalent functions for fixed $q,r$. Since $q \equiv 1 \pmod 3$, there are non-cubes in $\F_q$, so we can up to equivalence assume that $\w^3=:u  \in \F_{q}$ from now on.

We now identify $\F_{q^3}$ with $\F_q^3$ via the $\F_q$-basis $\{1,\omega,\omega^2\}$, and write $X=x+y\omega + z \omega^2 \in \F_{q^3}$. We then further identify $X \in \F_{q^3}$ with the vector $(x,y,z)\in \F_q^3$. Simple calculations (under observation of $\omega^3=u \in \F_q$) yield that under this identification
\begin{align*}
    X^{r+1} = (&x^{r+1}+u^{(2r+1)/3}yz^r+u^{(r+2)/3}y^rz,\\
	           &u^{(2r+1)/3}z^{r+1}+x^ry+u^{(r-1)/3}xy^r,\\
			   &u^{(r-1)/3}y^{r+1}+x^rz+u^{(2r-2)/3}xz^r).
\end{align*}

% where 
% \[A=u^{(2r+1)/3}, B=u^{(r+2)/3}, C=u^{(r-1)/3}.\]
Note that $u^q=u$, so we obtain after further calculations
\begin{align*}
    \omega^{q-1}X^{qr+q^2}=u^{(q-1)/3}X^{qr+q^2}
    =(&u^{(q-1)/3}x^{r+1}+u^{(r+2)/3}y^rz+u^{(2qr+1)/3}yz^r,\\
	  &x^ry + u^{(qr+q-2)/3}xy^r+u^{(2r+q)/3}z^{r+1},\\
	  &u^{(2q-2)/3}x^rz+u^{(qr-1)/3}y^{r+1}+u^{(2r-2)/3}xz^r).
\end{align*}
In total, we then have
\begin{align}
    X^{r+1}-\omega^{q-1}X^{qr+q^2} =& ((1-u^{(q-1)/3})x^{r+1}+(u^{(2r+1)/3}-u^{(2qr+1)/3})yz^r,\nonumber\\
    &(u^{(2r+1)/3}-u^{(2r+q)/3})z^{r+1}+(u^{(r-1)/3}-u^{(qr+q-2)/3})xy^r,\nonumber\\
    &(u^{(r-1)/3}-u^{(qr-1)/3})y^{r+1}+(1-u^{(2q-2)/3})x^rz). \nonumber
\end{align}

Let us consider the equivalent function after scaling every coordinate to isolate coefficients:

\begin{align}
    G(x,y,z)=(&x^{r+1}+u\frac{u^{2(r-1)/3}-u^{2(qr-1)/3}}{1-u^{(q-1)/3}}yz^r, \nonumber \\
	          &u z^{r+1}+\frac{u^{(r-1)/3}-u^{(qr+q-2)/3}}{u^{2(r-1)/3}-u^{(2r+q-3)/3}}xy^r, \nonumber \\
			  &y^{r+1}+\frac{1-u^{2(q-1)/3}}{u^{(r-1)/3}-u^{(qr-1)/3}}x^rz). \label{eq:total2}
\end{align}

We consider the three fractions that appear. Let $\zeta =u^{(q-1)/3}$. Note that $\zeta$ has order $3$ and $r \equiv 1 \pmod 3$. Using this, we simplify $u^{(qr-1)/3}=u^{((q-1)r+r-1)/3}=\zeta u^{(r-1)/3}$. Then 
\[\frac{u^{2(r-1)/3}-u^{2(qr-1)/3}}{1-u^{(q-1)/3}} = u^{2(r-1)/3}\frac{1-\zeta^2}{1-\zeta}=-\zeta^2 u^{2(r-1)/3},\]
using that $(1-\zeta^2)/(1-\zeta)=-\zeta^2$. Similarly, 
\[\frac{u^{(r-1)/3}-u^{(qr+q-2)/3}}{u^{2(r-1)/3}-u^{(2r+q-3)/3}} = \frac{u^{(r-1)/3}(1-\zeta^2)}{u^{2(r-1)/3}(1-\zeta)}=-\frac{\zeta^2}{u^{(r-1)/3}}\]
and 
\[\frac{1-u^{2(q-1)/3}}{u^{(r-1)/3}-u^{(qr-1)/3}}=\frac{1-\zeta^2}{u^{(r-1)/3}(1-\zeta)}=-\frac{\zeta^2}{u^{(r-1)/3}}.\]
We can thus simplify Eq.~\eqref{eq:total2} to:
\[
    G(x,y,z)=(x^{r+1}-u \zeta^2 u^{2(r-1)/3} yz^r,u z^{r+1}-\frac{\zeta^2}{u^{(r-1)/3}}    xy^r,y^{r+1}- \frac{\zeta^2}{u^{(r-1)/3}}    x^rz). \]
We now perform the map $z \mapsto \zeta u^{(r-1)/3} z$ that preserves equivalence and arrive at
\[
    G'(x,y,z)=(x^{r+1}-u^{(r^2+r+1)/3} yz^r,\zeta^2 u^{(r^2+2)/3}z^{r+1}-\frac{\zeta^2}{u^{(r-1)/3}}    xy^r,y^{r+1}-    x^rz). \]
Multiplying the second coordinate with $\zeta u^{(r-1)/3}$, setting $y \mapsto -y$ and renaming $y \leftrightarrow z$ yields finally
\[
    G''(x,y,z)=(x^{r+1}+u^{(r^2+r+1)/3} y^rz, u^{(r^2+r+1)/3}y^{r+1}+    xz^r,z^{r+1}-    x^ry), \]
    which is exactly $F$ from Theorem \ref{thm_main_v2} for $a=u^{(r^2+r+1)/3}$, $\sigma =r$, $b=c=0$. 
\end{proof}

\begin{proof}[Proof of Theorem~\ref{thm:inequiv} (iv)]

We only need to prove that the family of semifields we construct in 
Theorem~\ref{thm_main} contains members not contained in the ZKW, BB semifields, 
as well as the family of twisted fields as these semifields are the only ones
that can have the order $q^3$, see Section~\ref{subs:comm_semifields}. 
We already proved in Part (i) that as long as $m > 2$, the family does not intersect 
with the family of twisted fields.

It is easy to find $p,m$ such that the necessary conditions for the 
ZKW and BB semifields are never satisfied. For instance, 
pick a prime $p \equiv 2 \pmod 3$, and $m=3$. Then $q=p^m \equiv 2 \pmod 3$, 
in particular there are no BB semifields of order $p^{3m}=p^9$ for these choices.
Moreover, by Part (iii) of Theorem~\ref{thm:inequiv}, it is enough to 
consider $0<k<m$, so if $m=3$ we have $k \in \{1,2\}$. In both cases 
$d=\gcd(k,m)=1$ and $m/d+k/d \not\equiv 0 \pmod 3$, violating the ZKW 
conditions. We conclude that there are also no ZKW semifields of order 
$p^{3m}=p^9$. However, such restrictions do not apply to the semifield 
family we construct in Theorem~\ref{thm_main}. Picking for instance 
$k=1$ (still with $m=3$ and the same $p$) yields semifields of order $p^9$. 
\end{proof}

\begin{remark}
We don't study the question of complete inequivalence in this paper
since Theorem~\ref{thm:inequiv} already shows that our construction 
is more general. In fact, our view of these semifields gives a natural 
explanation for the ZKW and BB semifields --- the original ZKW and BB 
semifields are in essence just the semifields in the family of 
Theorem~\ref{thm_main} that can be realized by Dembowski-Ostrom 
binomials.
\end{remark}

% \section{Conclusion}\label{s:conclusion}

%%%%%%%%%%%%%%%%%%%%%%%%%%%%%%%%%%%%%%%%%%%%%%%%%%%%%%%%%%%%%%%%%%%%%%%%%%%%%%%%%%%%%%

\appendix
\section{Invariants of the new semifields} \label{sec_nuclei}

Associative substructures of a semifield $\S = (\F_p^n,+,\ast)$, 
namely the left, middle and right nuclei, are defined as follows:
\begin{align*}
\mathcal{N}_l(\S) &:= \{ x \in \F_p^n \ : \ (x \ast y) \ast z = x \ast (y \ast z), \ y,z \in \F_p^n \},\\
\mathcal{N}_m(\S) &:= \{ y \in \F_p^n \ : \ (x \ast y) \ast z = x \ast (y \ast z), \ x,z \in \F_p^n \},\\
\mathcal{N}_r(\S) &:= \{ z \in \F_p^n \ : \ (x \ast y) \ast z = x \ast (y \ast z), \ x,y \in \F_p^n \}.
\end{align*}
The \emph{nucleus} of $\S$ is then defined as the intersection 
of left, middle and right nuclei, and the \emph{center}, denoted 
by $\mathcal{C}(\S)$ is comprised of all elements in the nucleus 
that commute with all elements in $\S$. It is easy to check that 
$\mathcal{N}_l(\S),\mathcal{N}_m(\S),\mathcal{N}_r(\S) \subseteq \F_p^n$ 
are finite fields and if $\S$ is commutative then 
$\mathcal{N}_l(\S) = \mathcal{N}_r(\S)$. 
Nuclei and center are isotopy invariants for semifields. They are thus useful in determining if different semifields are isotopic or not.
Since a pre-semifield $\P$ is always isotopic 
to the corresponding semifield constructing 
via Kaplansky's trick, the invariants can be 
extended to pre-semifields as well. 

In this section, we calculate the nuclei and the center of the semifield constructed in Theorem~\ref{thm_main}. This will make it easier to decide if new constructions of semifields are isotopic to our semifields or not.

Let us start with 
the precise definition of the semifield multiplication induced by the functions in Theorem~\ref{thm_main}.

\begin{definition}\label{def_pre-semifield}
Let $q$ be an odd prime power, $\id \ne \sigma \in \Aut(\F_q)$ 
and $a,b,c \in \F_q$ such that 
$x^{\sigma^2+\sigma+1}+cx^{\sigma^2+\sigma}+bx^{\sigma^2}+a=0$ 
has no solution $x \in \F_q$. We then define the pre-semifield 
$\P_{\sigma,a,b,c}=(\F_{q}^3,+,\ast)$ via the multiplication 
induced by the polarization 
\[
x \ast y = F(x+y) -F(x) -F(y),
\]
where $F$ is defined in Theorem \ref{thm_main}, that is to say,
\[
\begin{pmatrix} x \\ y \\z \end{pmatrix} \ast \begin{pmatrix} u \\ v \\w \end{pmatrix} = 
\begin{pmatrix} 
x^{\sigma}u+xu^{\sigma}+a(y^{\sigma}w+v^{\sigma}z)+b(x^{\sigma}v+yv^{\sigma})+c(x^{\sigma}w+u^{\sigma}z) \\ 
a(y^{\sigma}v+yv^{\sigma})+z^{\sigma}u+xw^{\sigma}+b(z^{\sigma}v+w^{\sigma}y)+c(x^{\sigma}v+u^{\sigma}y) \\
z^{\sigma}w+z^{\sigma}w-(x^{\sigma}v+u^{\sigma}y) 
\end{pmatrix}.  
\]
\end{definition}

 The nuclei and center of the pre-semifields $\P_{\sigma,a,b,c}$ can be computed using a
 theorem of Marino and Polverino \cite[Theorem 2.2]{MP12}.

Let $\P = (\F_p^n,+,\ast)$ be a commutative pre-semifield with right 
multiplication defined as
\[
	R_U : X \mapsto X \ast U, \quad \textrm{for } U \in \F_p^n.
\]
Then the {spread set} associated to $\P$ is defined as
\[
	\mathcal{R} = \{ R_U : U \in \F_p^n \}.
\]

\begin{theorem}\cite[Theorem 2.2]{MP12}\label{MP}
Let $\mathcal{A},\mathcal{B} \subset \End_{\F_p}(\F_p^n)$ be the largest sets 
(and then necessarily fields) such that
\[
	\mathcal{R} \mathcal{A}\subseteq \mathcal{R} \textrm{ and }  \mathcal{B} \mathcal{R} \subseteq \mathcal{R}.
\]
Then
\[
	\mathcal{N}_m(\P) \cong \mathcal{A}\textrm{ and } \mathcal{N}_l(\P) = \mathcal{N}_r(\P) \cong \mathcal{B}.
\]
The center $\mathcal{C}(\P)$ is isomorphic to the largest field $K_w(\P)\subseteq \mathcal{N}_r(\P)$ such that $C_1C_2=C_2w^{-1}C_1w$ for all $C_1 \in K_w(\P)$, $C_2 \in \mathcal{R}$ for some invertible $w \in \mathcal{R}$. 
\end{theorem}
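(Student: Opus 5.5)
Since this is \cite[Theorem 2.2]{MP12}, the plan is to reconstruct the standard operator argument rather than anything specific to $\P_{\sigma,a,b,c}$. I first reduce to a semifield. By Kaplansky's trick, $\P$ is isotopic to a semifield $\S=(\F_p^n,+,\circ)$ with $(x\ast e)\circ(e\ast y)=x\ast y$, so its right multiplications are $R^{\S}_{V}=R_{U}R_e^{-1}$ for $V=e\ast U$. Hence $\mathcal R^{\S}=\mathcal R\,R_e^{-1}$, with $R_e\in\mathcal R$ invertible by (S3). In general, an isotopism $(N,L,M)$ transforms a spread set as $\mathcal R\mapsto N\mathcal R L^{-1}$. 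Under this, the right stabiliser $\{\alpha:\mathcal R\alpha\subseteq\mathcal R\}$ is conjugated by $L$ and the left stabiliser $\{\beta:\beta\mathcal R\subseteq\mathcal R\}$ is conjugated by $N$. Nuclei are isotopy invariants, so it suffices to prove the statement for $\S$. Note that $\S$ need not be commutative: Kaplansky's trick uses $x\ast e$ and $e\ast y$ asymmetrically. I therefore first prove the nucleus correspondences for an arbitrary semifield, and only afterwards use the commutativity of $\P$.

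For the semifield, I translate the associativity conditions into operator identities, using $x\circ U=R_U(x)$. Then $(x\circ y)\circ z=R_zR_y(x)$ and $x\circ(y\circ z)=R_{R_z(y)}(x)$. For the middle nucleus: $y\in\mathcal N_m$ iff $R_zR_y=R_{R_z(y)}$ for all $z$. This gives $\mathcal R R_y\subseteq\mathcal R$, so $y\mapsto R_y$ embeds $\mathcal N_m$ into $\mathcal A$. Conversely, take $\alpha\in\mathcal A$. Then $\alpha=R_\epsilon\alpha\in\mathcal R$, say $\alpha=R_y$ with $y=\alpha(\epsilon)$. For each $z$ we have $R_zR_y=R_{y'}$ for some $y'$, and evaluating at $\epsilon$ forces $y'=R_z(y)$, so $y\in\mathcal N_m$. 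The same computation with left multiplications, or with the roles of the factors swapped, identifies the right nucleus with $\mathcal B$. That is, $z\in\mathcal N_r$ iff $L_xL_y=L_{x\circ y}$ type identities hold; after transposing via the spread set this becomes $\beta\mathcal R\subseteq\mathcal R$, and the converse again follows by evaluating at the identity $\epsilon$. I then show that $\mathcal A$ and $\mathcal B$ are closed under sums and products and consist of invertible maps, because every nonzero element of $\mathcal R$ is invertible by (S3). Together with finiteness this makes them fields.

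To handle $\mathcal N_l=\mathcal N_r$, I use the commutativity of $\P$. For commutative $\P$, the elements $x\ast y$ and $y\ast x$ coincide, and the transformation $\mathcal R\mapsto\mathcal R R_e^{-1}$ sends the left stabiliser of $\mathcal R$ to a conjugate of the left stabiliser of $\mathcal R^{\S}$. I then check directly in $\S$ that the map $x\mapsto$ (left multiplication by $x$) corresponds, through $R_e$, to the same stabiliser. This identifies both $\mathcal N_l(\P)$ and $\mathcal N_r(\P)$ with $\mathcal B$.

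For the center, I take $w=R_e\in\mathcal R$. An element $c$ of the nucleus of $\S$ is central iff it commutes with everything, i.e. the left and right multiplications by $c$ agree. I would express left multiplication through the pre-semifield: $L^{\S}_{c}=R_{c'}R_e^{-1}$ conjugated by $w$. Rewriting the identity $C_1R^{\S}_V=R^{\S}_VC_1$ in terms of $\mathcal R$ then gives $C_1C_2=C_2w^{-1}C_1w$ for all $C_2\in\mathcal R$. The largest field in $\mathcal N_r$ satisfying this is therefore $K_w(\P)$. The main obstacle is this last bookkeeping step: tracking exactly where $w$ and $w^{-1}$ sit when passing between $\P$ and $\S$. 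I would settle it by testing the identity on $C_2=w$, and on the image of the identity element, to fix the correct placement.
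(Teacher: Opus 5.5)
The paper does not prove this statement; it quotes it from Marino--Polverino. Your proposal is therefore an independent reconstruction, and its skeleton is sound. The Kaplansky isotopism is $(\id,R_e,L_e)$, so $\mathcal{R}^{\S}=\mathcal{R}R_e^{-1}$. Hence $\mathcal{B}^{\S}=\mathcal{B}$ exactly and $\mathcal{A}^{\S}=R_e\mathcal{A}R_e^{-1}$. Your middle-nucleus argument ($\alpha=R_\epsilon\alpha=R_{\alpha(\epsilon)}$, then evaluate at $\epsilon$) is correct.

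\textbf{Commutativity of $\S$ and the left/right nuclei.} Your claim that $\S$ need not be commutative is false. For commutative $\P$ one has $R_e=L_e=:\phi$, and Kaplansky's rule reads $X\circ Y=\phi^{-1}(X)\ast\phi^{-1}(Y)$, which is symmetric.

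This matters, because your plan for $\mathcal{N}_l$ would fail in the non-commutative setting you believe you are in. There, $x\in\mathcal{N}_l$ iff $L_xR_z=R_zL_x$ for all $z$. So $x\mapsto L_x$ maps $\mathcal{N}_l$ onto the centralizer of $\mathcal{R}$ in $\End_{\F_p}(\F_p^n)$ (any $\phi$ commuting with all $R_z$ equals $L_{\phi(\epsilon)}$), not onto $\mathcal{B}$.

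For the same reason, the identity $L_xL_y=L_{x\circ y}$ that you attach to the right nucleus actually characterises the left nucleus. The right nucleus is handled exactly like the middle one:
\begin{itemize}
\item $z\in\mathcal{N}_r$ iff $R_zR_y=R_{y\circ z}$ for all $y$, i.e.\ $R_z\mathcal{R}\subseteq\mathcal{R}$;
\item conversely, $\beta=\beta R_\epsilon=R_{\beta(\epsilon)}$, and evaluating $\beta R_y=R_{y'}$ at $\epsilon$ gives $y'=y\circ\beta(\epsilon)$.
\end{itemize}
Since $\S$ is commutative, $(x\circ y)\circ z=z\circ(y\circ x)$ and $x\circ(y\circ z)=(z\circ y)\circ x$. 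So the left-nucleus condition on $x$ is the right-nucleus condition on $x$, and $\mathcal{N}_l=\mathcal{N}_r$ directly.

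\textbf{The center.} There is no real placement problem. With $w=R_e$ we have $\mathcal{R}^{\S}=\mathcal{R}w^{-1}$. So ``$C_1\in\mathcal{B}$ commutes with every element of $\mathcal{R}^{\S}$'' is literally $C_1C_2w^{-1}=C_2w^{-1}C_1$, i.e.\ $C_1C_2=C_2w^{-1}C_1w$ for all $C_2\in\mathcal{R}$.

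What you must actually supply is why this commuting condition singles out the center inside $\mathcal{N}_r$. Your criterion ``left and right multiplications by $c$ agree'' is vacuous in the commutative $\S$, and the statement ranges over $\mathcal{N}_r$, not over the nucleus.
\begin{itemize}
\item Suppose $c\in\mathcal{N}_r$ and $R_VR_c=R_cR_V=R_{V\circ c}$ for all $V$. Evaluating at $\epsilon$ gives $c\circ V=V\circ c$, hence $R_VR_c=R_{c\circ V}$, i.e.\ $c\in\mathcal{N}_m$. So $c$ lies in the nucleus and is central.
\item Conversely, a central $c$ gives $R_VR_c=R_{c\circ V}=R_{V\circ c}=R_cR_V$.
\end{itemize}
The set of such $C_1$ is closed under sums and products. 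It is therefore the largest subfield of $\mathcal{B}$ with this property, namely $K_w(\P)$.
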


Now, consider $\P_{\sigma,a,b,c}$ on $\F_q^3$.
Then $\mathcal{R} = \{ R_{u,v,w} : u,v,w \in \F_q \}$, where
\[
	R_{u,v,w} : \begin{pmatrix}x \\ y \\z \end{pmatrix} \mapsto \begin{pmatrix} R^{(1)}_{u,v,w}(x,y,z) \\ R^{(2)}_{u,v,w}(x,y,z) \\R^{(3)}_{u,v,w}(x,y,z) \end{pmatrix},
\]
with
\begin{align*}    	                    
    R^{(1)}_{u,v,w}(x,y,z)&=x^{\sigma}u+xu^{\sigma}+a(y^{\sigma}w+v^{\sigma}z)+b(x^{\sigma}v+u^{\sigma}y)+c(x^{\sigma}w+u^{\sigma}z),\\
     R^{(2)}_{u,v,w}(x,y,z) &=  a(y^{\sigma}v+yv^{\sigma})+z^{\sigma}u+xw^{\sigma}+b(z^{\sigma}v+w^{\sigma}y)+c(x^{\sigma}v+u^{\sigma}y), \\
    R^{(3)}_{u,v,w}(x,y,z) &=z^{\sigma}w+z^{\sigma}w-(x^{\sigma}v+u^{\sigma}y). 
\end{align*}
\begin{remark}
We exclude the case $\sigma = \id$. In that case, $F \in \QH(3,q)$. Thus, 
by Theorem \ref{thm_menichetti}, we have
$F \approx X^2        \in \QH(1,q^3)$ or 
$F \approx X^{q+1}   \in \QH(1,q^3)$. 
\end{remark}

The following theorem determines the nuclei of our semifields.

\begin{theorem}
Let $\P_{\sigma,a,b,c}$ be the pre-semifield of Definition \ref{def_pre-semifield}. 
Let $\Fix_{q}(\sigma) = \F_{p^d}$.
Then
\[
	\mathcal{N}_l(\P)=\mathcal{N}_r(\P)=\mathcal{C}(\P)=\mathcal{N}_m(\P)  \cong \F_{p^d}.
\]
\end{theorem}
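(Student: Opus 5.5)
The plan is to use the Marino–Polverino description (Theorem~\ref{MP}) and reduce everything to computing the right/left nucleus and the middle nucleus through the spread set $\mathcal{R}=\{R_{u,v,w}\}$. The easy direction comes first. Every $\lambda \in \F_{p^d}=\Fix_q(\sigma)$ satisfies $R_{u,v,w}(\lambda X)=\lambda R_{u,v,w}(X)=R_{\lambda u,\lambda v,\lambda w}(X)$, because every term of the multiplication is $\sigma$-semilinear in one argument and linear in the other, and $\lambda^\sigma=\lambda$. So scalar multiplication by $\F_{p^d}$ lies in both $\mathcal{A}$ and $\mathcal{B}$. Moreover these scalars commute with every $R_{u,v,w}$, so taking $w=R_{0,0,1}$ (which is invertible) the condition $C_1C_2=C_2w^{-1}C_1w$ holds. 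Hence $\F_{p^d}$ embeds in the center, and therefore in all nuclei.

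For the reverse inclusions I would first show that $\mathcal{A}$ and $\mathcal{B}$ consist of $\F_q$-semilinear maps, or even $\F_q$-linear ones. The natural route is the autotopism/normalizer argument already used in the proof of Theorem~\ref{thm:inequiv}(ii). The nucleus fields give autotopisms commuting with the cyclic group $C=\{(N_\alpha,L_\alpha,L_\alpha)\}$, and a Zsigmondy prime argument then forces them into $\GammaL(3,q)$. A more elementary alternative is to compare elements of $\mathcal{R}$ directly, as follows. Any $\phi\in\mathcal{A}$ satisfies $R_{U}\phi=R_{\phi'(U)}$ for some map $U\mapsto\phi'(U)$. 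Since $R_{1,0,0}-R_{0,0,0}$ and the other basis elements span $\mathcal{R}$ over $\F_p$, it suffices to examine $R_{1,0,0}\phi$, $R_{0,1,0}\phi$ and $R_{0,0,1}\phi$. Note that $R_{0,0,1}(x,y,z)=(ay^\sigma+cx^\sigma,\;x+by^{\sigma}\cdot 0+\dots,\;2z^\sigma)$ has a simple shape, and the entries of $R_{1,0,0}$ involve both $x$ and $x^\sigma$. Writing $\phi$ as an $\F_p$-linear map via coordinate polynomials $\sum_i \alpha_i x^{p^i}$, I will match the $p$-power exponents appearing in each coordinate. This should force $\phi$ to be $\F_q$-linear and, after using $R_{1,0,0}$, diagonal with equal entries $\lambda$.

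The final step is to pin down $\lambda$. Comparing coefficients, the term $x^\sigma u$ gives $\lambda^\sigma$ while $xu^\sigma$ gives $\lambda$. Both must be consistent with a single $R_{U'}$, and $U'$ is forced to be $\lambda U$ by the third coordinate $2z^\sigma w$. This yields $\lambda^\sigma=\lambda$, i.e.\ $\lambda\in\F_{p^d}$. The same coefficient comparison with left multiplication handles $\mathcal{B}$. Since the center is contained in the nucleus, all four structures equal $\F_{p^d}$. The hypothesis $\sigma\ne\id$ is essential at exactly this point; when $\sigma=\id$ the Menichetti case applies instead.

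The main obstacle is the reduction showing that an arbitrary $\F_p$-linear $\phi$ with $\mathcal{R}\phi\subseteq\mathcal{R}$ must be $\F_q$-semilinear and diagonal. The exponent bookkeeping over $\F_p$ with three coordinates is messy, especially when $a,b,c$ are nonzero. I would try the group-theoretic normalizer argument first. If it fails for small $m$, where the Zsigmondy exceptions occur, I would fall back on direct exponent comparison restricted to the elements $R_{u,0,0}$. Those are the simplest because their third coordinate is only $-u^\sigma y$.
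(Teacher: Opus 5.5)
Only the easy inclusion is actually proved: $\F_{p^d}$-scalars lie in $\mathcal{A}$, in $\mathcal{B}$ and in the center. The reverse inclusion is the content of the theorem, and you leave it at ``this should force $\phi$ to be \dots\ diagonal''. Neither of your routes closes it as stated.

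\begin{itemize}
\item \textbf{Group-theoretic route.} That nucleus autotopisms commute with $C$ is not automatic. You would need that $\{(N,\id,M)\}\cong\mathcal{B}^\times$ is normal in $\Autt(\S_F)$, so that $C$ acts on $\mathcal{B}$ by field automorphisms. You would also need that a Zsigmondy prime $r>m$ cannot divide $[\mathcal{B}:\F_p]\mid 3m$. Your worry about small $m$ is moot, since $\sigma\ne\id$ and $m/d$ odd force $m\ge 3$. Even when repaired, this route only yields $\F_q$-linearity.
\item \textbf{Elementary route.} It rests on a false reduction: $R_{1,0,0},R_{0,1,0},R_{0,0,1}$ do not span $\mathcal{R}$ over $\F_p$, which has $\F_p$-dimension $3m$. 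Discarding the other $U$ loses exactly the ``for all $u\in\F_q$'' freedom needed to kill the stray $p$-power terms.
\end{itemize}

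The missing idea is the paper's: ignore the first two coordinates entirely.
\begin{itemize}
\item Compare only the third component $R^{(3)}_{u,v,w}=z^\sigma w+zw^\sigma-(x^\sigma v+u^\sigma y)$. The paper's displayed ``$z^\sigma w+z^\sigma w$'' is a misprint, which you copied as $2z^\sigma w$.
\item Write the entries of $\phi$ as $\F_p$-linearized polynomials and let $U$ range over all of $\F_q^3$.
\item This forces the third row of $\phi\in\mathcal{B}$ (resp.\ the third column of $\phi\in\mathcal{A}$) to be $(0,0,C)$, with $C\,\id$ itself in $\mathcal{B}$ (resp.\ $\mathcal{A}$).
\item Then $\phi-C\,\id$ is a singular element of the field $\mathcal{B}$ (resp.\ $\mathcal{A}$), hence it is $0$.
\end{itemize}
Marino--Polverino tells you $\mathcal{A}$ and $\mathcal{B}$ are fields, but you never use it. That fact is what makes both the $\F_q$-linearity reduction and the bookkeeping you fear unnecessary.

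Your final step is also wrong for the middle nucleus. If $\lambda\,\id\in\mathcal{A}$, then $R_U(\lambda X)=R_{U'}(X)$ with $U'=\lambda^\sigma U$, not $\lambda U$. The $z$-terms give $\lambda^\sigma z^\sigma w+\lambda zw^\sigma=z^\sigma w'+zw'^\sigma$, so $w'=\lambda^\sigma w$ and hence $\lambda^{\sigma^2}=\lambda$. In fact every $\lambda\in\F_q\cap\F_{p^{2d}}$ yields an element of $\mathcal{A}$, so no argument can produce $\lambda^\sigma=\lambda$ at this point. What finishes the proof is the standing hypothesis that $m/d$ is odd, which gives $\F_q\cap\F_{p^{2d}}=\F_{p^d}$. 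Your proposal never invokes it, and this is where it is indispensable, not $\sigma\ne\id$. For $\mathcal{B}$ your computation is fine: $\lambda R_U=R_{U'}$ forces $U'=\lambda U$ and $\lambda^\sigma=\lambda$ directly.
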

\begin{proof}
For this proof, we write  $A \in \End_{\F_p}(\F_q^3)$ as 
\[
A : \begin{pmatrix} x \\ y \\z \end{pmatrix}  \mapsto 
\begin{pmatrix} 
\alpha_1(x)+\alpha_2(y)+\alpha_3(z)\\ 
\beta_1(x)+\beta_2(y)+\beta_3(z)  \\
\gamma_1(x)+\gamma_2(y)+\gamma_3(z)  
\end{pmatrix},
\]
where $\alpha_i,\beta_i\gamma_i\in \End_{\F_p}(\F_q)$. We apply Theorem~\ref{MP}. 
We have $A \in \mathcal{B}$ if and only if for each $(u,v,w) \in \F_q^3$, 
there is a $(r,s,t) \in \F_q^3$ such that 
\begin{equation} \label{eq:nuclei1}
    \alpha_i R^{(1)}_{u,v,w} + \beta_i R^{(2)}_{u,v,w} + \gamma_i R^{(3)}_{u,v,w} = R^{(i)}_{r,s,t}
\end{equation}
holds for all $i \in \{1,2,3\}$. Firstly, note that $u=v=w=0$ clearly 
has to correspond to $r=s=t=0$. We first consider the case $i=3$. 
We compare coefficients of the monomials containing $x$ in 
Eq.~\eqref{eq:nuclei1} for $v=w=0$:
% \[\alpha_3(x)(x^{\sigma}u+xu^{\sigma}+bx^{\sigma}v+cx^{\sigma}w)+\beta_3(xw^{\sigma}+cx^{\sigma}v)+\gamma_3(-x^{\sigma}v)=-x^{\sigma} s.\]
\[
 	\alpha_3(x^{\sigma}u+xu^{\sigma})=-x^{\sigma} s.
\]
Then $\alpha_3 \in \End_{\F_p}(\F_q)$ is necessarily the zero polynomial. Indeed, 
by writing $\alpha_3=\sum_{i=0}^{m-1}c_ix^{p^i}$ as a linearized polynomial and 
comparing coefficients again, we get (recalling $\sigma=p^k$),
\[\alpha_3(x^{\sigma}u+xu^{\sigma})=\sum_{i=0}^{m-1}c_i(x^{p^{i+k}}u^{p^i}+x^{p^i}u^{p^{i+k}})=\sum_{i=0}^{m-1}\left(c_{i-k}u^{p^{i-k}}+c_{i}u^{p^{i+k}}\right)x^{p^i},\]
where indices are taken modulo $m$. For all $u \in \F_q$ this has to evaluate as a polynomial to $-x^{\sigma} s$ for some $s \in \F_q$. This means for all $i \neq k$, we have $c_{i-k}u^{p^{i-k}}+c_{i}u^{p^{i+k}}=0$ which implies $c_{i-k}=u^{p^{2k}}c_{i}$. Since the coefficients $c_i$ are independent of $u$, this can hold for all $u \in \F_q$ only if $c_{i-k}=c_{i}=0$ for all $i\neq k$. Since $k\neq 2m$ (otherwise $m/\gcd(k,m)$ is even, violating Theorem~\ref{thm_main}), we conclude $c_i=0$ for all $i$, and $\alpha_3$ is the zero polynomial as claimed.
Checking similarly the coefficients 
of the monomials containing $y$ for $u=w=0$ yields:
\[
\beta_3(a(y^{\sigma}v+yv^{\sigma}))=-r^{\sigma}y,
\]
implying $\beta_3=0$ using a similar argumentation. So 
Eq.~\eqref{eq:nuclei1} for $i=3$ reduces to:
\[
\gamma_3(z^{\sigma}w+zw^{\sigma}-(x^{\sigma}v+u^{\sigma}y))=
         z^{\sigma}t+zt^{\sigma}-(x^{\sigma}s+r^{\sigma}y).
\]
So $\gamma_3(X)=C\cdot X$ necessarily for $C \in \F_q^*$ and $Cv=s$, 
$Cw=t$, $Cw^{\sigma}=t^{\sigma}$, $Cu^{\sigma}=r^{\sigma}$ 
This implies in particular that $C \in \F_{p^d}$. We conclude that any
$A \in \mathcal{B}$ is necessarily of the form
\begin{equation} \label{eq:nuclei1specific}
 A : \begin{pmatrix} x \\ y \\z \end{pmatrix}  \mapsto 
\begin{pmatrix} 
\alpha_1(x)+\alpha_2(y) \\ \beta_1(x)+\beta_2(y)  \\\gamma_1(x)+\gamma_2(y)+Cz  \end{pmatrix} 
\end{equation}
for some $C \in \F_{p^d}$. By inspection, it is easy to see that 
\[A_C : \begin{pmatrix} x \\ y \\z \end{pmatrix}  \mapsto \begin{pmatrix} Cx \\ Cy  \\Cz  \end{pmatrix}  \in \mathcal{B}\]
for $Cu=r, Cv=s, Cw=t$. Let $A \in \mathcal{B}$ be of the form in Eq.~\eqref{eq:nuclei1specific}. Then $A-A_C \in \mathcal{B}$ since $\mathcal{B}$ is as a field closed under subtraction. But $A-A_C$ is not invertible since it does not depend on $z$, so $A=A_C$, again because $\mathcal{A}$ is a field. So $\mathcal{B}$ consists exactly of the $A_C$ for $C \in \F_{p^d}$. In particular, $\mathcal{B} \cong \F_{p^d}$.

To determine the center, it is easy to see that all elements in $\mathcal{B}$ commute with the elements in $\mathcal{R}$, so $\mathcal{C}(\P)\cong \mathcal{B}$ as well.

% Also note that $u,v,w$ vanish if and only if $r,s,t$, respectively, vanish.

% We can proceed similarly for $i=1,2$, we only outline the $i=1$ case. Checking the monomials in $z$ of Eq.~\eqref{eq:nuclei1} for $i=1$ for $u=v=0$ yields (recall $u=v=0$ implies $r=s=0$)
% \[\gamma_1(z^{\sigma}w+z^{\sigma}w)=0,\]
% so $\gamma_1=0$. The same procedure for the monomials in $y$ and $v=w=0=s=t$ yields 
% \[\beta_1(cu^{\sigma}y)=0,\]
% so $\beta_1=0$ as well. So Eq.~\eqref{eq:nuclei1} for $i=1$ becomes
% \begin{align*}
%     \alpha_1(x^{\sigma}u+xu^{\sigma}+a(y^{\sigma}w+v^{\sigma}z)&+b(x^{\sigma}v+u^{\sigma}y)+c(x^{\sigma}w+u^{\sigma}z)) \\&= x^{\sigma}r+xr^{\sigma}+a(y^{\sigma}t+s^{\sigma}z)+b(x^{\sigma}s+r^{\sigma}y)+c(x^{\sigma}t+r^{\sigma}z).
% \end{align*}
% Setting various combinations of $u,v,w$ to zero leads to $\alpha_1(X)=C_1 X$ with $C_1=C_3$. Similarly, by considering $i=2$ in Eq.~\eqref{eq:nuclei1} yields $\alpha_2=\gamma_2=0$ and $\beta_2(X)=C_3 X$. We conclude that 
% $A : \begin{pmatrix} x \\ y \\z \end{pmatrix}  \mapsto \begin{pmatrix} C_3 x \\ C_3y \\C_3z  \end{pmatrix} $, and since $C_3$ is in $K$, we have that $\mathcal{N}_l(\P)=\mathcal{N}_r(\P) \cong \mathcal{B} \cong K$. 

Let us now consider the middle nucleus.
We have $A \in \mathcal{A}$ if and only if for each $(u,v,w)\in \F_q^3$, there is a $(r,s,t) \in \F_q^3$ such that 
\begin{equation} \label{eq:nuclei2}
    R^{(i)}_{u,v,w} \begin{pmatrix} \alpha_1(x)+\alpha_2(y)+\alpha_3(z) \\ \beta_1(x)+\beta_2(y)+\beta_3(z)  \\\gamma_1(x)+\gamma_2(y)+\gamma_3(z)  \end{pmatrix} = R^{(i)}_{r,s,t}\begin{pmatrix} x \\ y \\z \end{pmatrix} 
\end{equation}
for $i \in \{1,2,3\}$.
Firstly, by direct inspection, we can see that for $C \in \F_q \cap \F_{p^{2d}}$ the endomorphisms 
\begin{equation} \label{eq:nucleispecific}
    A_C : \begin{pmatrix} x \\ y \\z \end{pmatrix}  \mapsto \begin{pmatrix} Cx \\ Cy  \\Cz  \end{pmatrix}  \in \mathcal{A}
\end{equation}
 since they satisfy Eq.~\eqref{eq:nuclei2} for $r=uC^\sigma$, $s=vC^\sigma$, $t=wC^\sigma$. We now show that these are the only elements in  $\mathcal{A}$. 

We consider Eq.~\eqref{eq:nuclei2} for $i=3$. 

% Isolating the monomials in $x$ for $u=v=0$, we get
% \[\gamma_1(x)^\sigma w +\gamma_1(x) w^\sigma = -x^\sigma s.\]
% This has to have a solution for all choices of $w$, which is clearly only possible for $\gamma_1=0$. Similarly, considering the monomials in $y$ for $u=v=0$ yields $\gamma_2=0$ in the same way. 

Isolating the monomials in $z$, we get
\begin{equation} \label{eq:nuclei2_1}
    \gamma_3(z)^{\sigma}w + \gamma_3(z)w^{\sigma}-\alpha_3(z)^{\sigma}v-u^{\sigma} \beta_3(z)=z^\sigma t + zt^\sigma.
\end{equation}

For $v=w=0$, we have
\[
-u^\sigma \beta_3(z)=z^\sigma t + zt^\sigma,
\]
For all choices of $t \in \F_q$, there has to be a $u \in \F_q$ such that this 
polynomial equation in $z$ is satisfied. This is only possible for $\beta_3=0$
which can be rigorously checked again by expanding $\beta_3$ as a linearized 
polynomial and comparing coefficients.
Similarly, the monomials in $z$ for $u=w=0$ give $\alpha_3$. 
So Eq.~\eqref{eq:nuclei2_1} gives
\[ 
\gamma_3(z)^{\sigma}w + \gamma_3(z)w^{\sigma} = z^\sigma t + zt^\sigma.
\]
This polynomial equation only has two possible solutions, either 
\begin{enumerate}
\item $\gamma_3(X)=CX$ or 
\item $\sigma^2=\id$ and $\gamma_3(X)=CX^\sigma$ 
\end{enumerate}
for some $C \in \F_q$
(which again can be checked by comparing coefficients). 
We now consider both cases. 
\begin{enumerate}
\item 
Let $\gamma_3(X)=CX$.
The equation holds 
if $C^{\sigma}w=t$ and $Cw^{\sigma}=t^{\sigma}$, which can hold 
simultaneously only if $C \in \F_{p^{2d}}$. 

\item For $\sigma^2=\id$ and $\gamma_3(X)=CX^\sigma$ we get the 
conditions $C^{\sigma}w=t^{\sigma}$ and $Cw=t$, which only hold if 
$w=w^{\sigma}$, which is not always the case since $\sigma \neq \id$. 
So this case does not give any solution. 

We conclude that 
$\alpha_3=\beta_3=0$ and $\gamma_3(X)=CX$ for $C \in \F_{p^{2d}} \cap \F_q$. 
But note that (see Theorem~\ref{thm_main}), we have for $q=p^m$ that $m/d$ is odd, so $\F_{p^{2d}} \cap \F_q=\F_{p^d}$.
\end{enumerate}

So a mapping $A\in \mathcal{A}$ has necessarily the structure
\[
A : \begin{pmatrix} x \\ y \\z \end{pmatrix}  \mapsto 
\begin{pmatrix} 
\alpha_1(x)+\alpha_2(y) \\ \beta_1(x)+\beta_2(y)  \\\gamma_1(x)+\gamma_2(y)+Cz \end{pmatrix}  
\in \mathcal{A}
\]
for some $C \in \F_{p^{d}}$.  With the same argumentation as 
for the left/right nuclei see Eq.~\eqref{eq:nuclei1specific} and the following lines, 
we conclude that $\mathcal{A}$ only consists 
of the mappings in Eq.~\eqref{eq:nucleispecific}. In particular, 
$\mathcal{N}_m(\P) \cong \mathcal{A} \cong \F_{p^{d}}$.
% \[\beta_3(a(y^{\sigma}v+yv^{\sigma})+bw^{\sigma}y+cu^{\sigma}y)+\gamma_3(-u^{\sigma}y)=-r^{\sigma}y,\]
\end{proof}

%%%%%%%%%%%%%%%%%%%%%%%%%%%%%%%%%%%%%%%%%%%%%%%%%%%%%%%%%%%%%%%%%%%%%%
\section{Auxiliary lemmas} \label{sec:lemmas}

We need a few auxiliary results on the number of $\F_q$-roots of 
polynomials of the form 
\begin{equation*}
P(x)=a_3x^{\sigma^2+\sigma+1} + a_2x^{\sigma+1} + a_1x + a_0,
\end{equation*}
where $\sigma = p^k$ and $q = p^m$ with $a_3 \ne 0$.
These polynomials are called \emph{projective polynomials} and have been 
studied extensively, see for instance
\cite{abhyankar1997projective,von2010composition,mcguire2019characterization}. 
The number of roots of $P$ is necessarily an element of 
$\{0,1,2,3,p^d+1,p^d+2,p^d+p+1\}$ where $d=\gcd(k,m)$ 
\cite[Theorem 11]{mcguire2019characterization}. 
Note that, no matter the choice of $\sigma$ and $q$, there are 
always $a_i$ such that $P$ has no roots in $\F_q$. This is well 
known and essentially follows from the existence of irreducible 
elements in skew-polynomial rings of any degree $d\geq 1$, 
see \cite[Theorem 1]{odoni1999additive}.

\subsection{A lemma on the number of $\F_q$-roots of projective polynomials}
Let $q = p^m$ and $\tau = p^d$ with $d \mid m$. The polynomials of the form
\begin{equation}
\label{eq_linearized}
	B(x) = \sum_{i = 0}^n b_i x^{\tau^i} \in \F_{q}[x].
\end{equation}
are called $\F_\tau$-linearized polynomials over $\F_q$.
They form a non-commu\-ta\-tive ring $\mathfrak{R}_{q,\tau}$
where multiplication is defined as polynomial composition and 
addition is the usual addition of polynomials. 
Ore introduced \textit{skew-polynomial rings} in \cite{Ore1}, 
showed that $\mathfrak{R}_{q,\tau}$ gives an example and proved 
\cite{ore1933special} the following result on when 
$x^\tau-\alpha x \in \mathfrak{R}$
satisfies 
$B = (x^\tau -\alpha x) \circ R$ or $B = R \circ (x^\tau -\alpha x)$ 
for some $\F_\tau$-linearized $R \in \mathfrak{R}_{q,\tau}$. 

\begin{lemma} \cite[Theorem 3]{ore1933special}
\label{lem_ore}
Let $\tau = p^d$ and $q = \tau^{m/d}$. Define,
\[
	Q_\tau(x) = \sum_{i = 0}^{n} b_i x^{(\tau^i-1)/(\tau-1)} \in \F_{q}[x],
\]
and
\[
	\widetilde{Q}_\tau(x) = \sum_{i = 0}^{n} b_{n-i}^{\tau^i} x^{(\tau^i-1)/(\tau-1)} \in \F_{q}[x],
\]
where $b_0 \ne 0$ and $b_n \ne 0$. We have,
\begin{enumerate}
\item $B = R \circ (x^\tau -\alpha x)$ for some $\F_\tau$-linearized $R \in \mathfrak{R}$ 
$\iff Q_\tau(\alpha) = 0$, and 
\item $B = (x^\tau -\alpha x) \circ R$ for some $\F_\tau$-linearized $R \in \mathfrak{R}$ 
$\iff \widetilde Q_\tau(\alpha) = 0$. 
\end{enumerate}
\end{lemma}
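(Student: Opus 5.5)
The approach is to prove both parts with the division algorithm in the skew-polynomial ring $\mathfrak{R}_{q,\tau}$, where multiplication is composition. Put $L(x)=x^\tau-\alpha x$. Its leading coefficient is $1$, so for every $B\in\mathfrak{R}_{q,\tau}$ we can divide on either side with remainder of $\tau$-degree $0$:
\[
B=R\circ L+\rho x \qquad\text{and}\qquad B=L\circ R'+\rho' x ,
\]
with $\rho,\rho'\in\F_q$. The quotient and remainder are unique. Indeed, if $R\circ L=\rho x$ with $R\neq 0$, the left side has $\tau$-degree at least $1$, a contradiction; the same holds on the other side. Hence $B$ lies in the left ideal $\mathfrak{R}_{q,\tau}\circ L$ if and only if $\rho=0$, and in the right ideal $L\circ\mathfrak{R}_{q,\tau}$ if and only if $\rho'=0$. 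The whole proof then reduces to computing $\rho$ and $\rho'$ explicitly.

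For part (i), I reduce monomials modulo the left ideal $\mathfrak{R}_{q,\tau}\circ L$, which is closed under composition on the left. First, $x^{\tau^{i+1}}=x^{\tau^i}\circ x^\tau$. Substituting $x^\tau=L+\alpha x$ gives
\[
x^{\tau^{i+1}}=x^{\tau^i}\circ L+\alpha^{\tau^i}x^{\tau^i}.
\]
By induction, $x^{\tau^i}\equiv c_i x$ with $c_0=1$ and $c_{i+1}=\alpha^{\tau^i}c_i$. Therefore $c_i=\alpha^{1+\tau+\cdots+\tau^{i-1}}=\alpha^{(\tau^i-1)/(\tau-1)}$. By $\F_q$-linearity of the reduction in the coefficients, this yields $B\equiv Q_\tau(\alpha)\,x$, so $\rho=Q_\tau(\alpha)$. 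Uniqueness of the remainder then gives (i).

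For part (ii), I work in the right ideal $L\circ\mathfrak{R}_{q,\tau}$ instead, using Horner-type peeling from the left. For any $P$ we have $x^\tau\circ P=L\circ P+\alpha P$. Every term of $B$ of $\tau$-degree $\geq 1$ can be written as $x^\tau\circ(b_i^{\tau^{-1}}x^{\tau^{i-1}})$, where $\tau^{-1}$ makes sense as a power of Frobenius on $\F_q$. Applying the substitution gives
\[
B\equiv b_0x+\alpha\Bigl(\sum_{i\ge1}b_i^{\tau^{-1}}x^{\tau^{i-1}}\Bigr),
\]
which is an element of $\tau$-degree $n-1$. Iterating $n$ times produces the remainder as an explicit sum
\[
\rho'=\sum_{i=0}^{n} b_i^{\tau^{-i}}\,\alpha^{1+\tau^{-1}+\cdots+\tau^{-(i-1)}}.
\]
Finally I raise $\rho'$ to a suitable power $\tau^{j}$. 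Since Frobenius is a bijection, this does not affect whether it vanishes. The aim is to clear the negative exponents and reindex $i\mapsto n-i$ so the expression matches $\widetilde{Q}_\tau(\alpha)$, up to a nonzero power of $\alpha$ or of $b_n$ coming from the normalisation, which is allowed since $b_0b_n\neq 0$.

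I expect the main obstacle to be this last bookkeeping step in (ii). The exponents of $\alpha$ come out as geometric sums in $\tau^{-1}$, and one must choose the right Frobenius twist to turn them into the $(\tau^i-1)/(\tau-1)$ form. If the exponents do not line up directly, I would fall back on the adjoint (trace-dual) anti-isomorphism of $\mathfrak{R}_{q,\tau}$, $\sum b_ix^{\tau^i}\mapsto\sum b_i^{\tau^{-i}}x^{\tau^{-i}}$. This map reverses composition and so turns right factors into left factors, which reduces (ii) to (i) applied to the adjoint of $B$ and the adjoint of $L$, normalised to be monic. Matching that output with $\widetilde{Q}_\tau$ would then be a direct coefficient comparison.
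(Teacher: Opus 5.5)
Your proof of (i) is correct and complete. The left ideal $\mathfrak{R}_{q,\tau}\circ L$ is closed under addition and under left multiplication by scalars, the reduction $x^{\tau^i}\equiv\alpha^{(\tau^i-1)/(\tau-1)}x$ is right, and uniqueness of the remainder follows from $\deg(R\circ L)\ge\tau$. The paper gives no argument of its own here; it only cites Ore, so this is a genuine self-contained proof. In (ii), your left-division remainder $\rho'=\sum_{i=0}^n b_i^{\tau^{-i}}\alpha^{1+\tau^{-1}+\cdots+\tau^{-(i-1)}}$ is also computed correctly, and $B\in L\circ\mathfrak{R}_{q,\tau}$ iff $\rho'=0$.

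The gap is the final matching step of (ii). It cannot be closed, because (ii) as printed is false. For $\alpha\neq 0$, raise $\rho'$ to the power $\tau^n$ and put $j=n-i$:
\[
(\rho')^{\tau^n}=\sum_{j=0}^{n} b_{n-j}^{\tau^j}\,\alpha^{\tau^{j+1}+\cdots+\tau^{n}}
=\alpha^{\tau+\tau^2+\cdots+\tau^n}\,\widetilde{Q}_\tau\bigl(\alpha^{-\tau}\bigr).
\]
The exponents of $\alpha$ decrease in $j$, whereas in $\widetilde{Q}_\tau(\alpha)$ they increase. No Frobenius twist or scalar normalisation turns one into the other.

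Your adjoint fallback gives the same answer. The normalised adjoint of $x^\tau-\alpha x$ is $x-\alpha^\tau x^\tau=(-\alpha^\tau x)\circ(x^\tau-\alpha^{-\tau}x)$. So (i) applied to $\widetilde{B}$ yields the condition $\widetilde{Q}_\tau(\alpha^{-\tau})=0$.

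Here is a concrete counterexample to the printed (ii). Take $q=9$, $\tau=3$, $\alpha$ a primitive element, and $B=x^3-\alpha x$. Then $B=(x^3-\alpha x)\circ x$. Yet $\widetilde{Q}_\tau(x)=1-\alpha^3x$, so $\widetilde{Q}_\tau(\alpha)=1-\alpha^4=2\neq 0$.

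What your computation actually proves is the corrected statement. For $\alpha\neq 0$, $B=(x^\tau-\alpha x)\circ R$ for some $R$ iff $\widetilde{Q}_\tau(\alpha^{-\tau})=0$. Equivalently, for $\gamma\neq 0$, $\gamma x^\tau-x$ is a left factor of $B$ iff $\widetilde{Q}_\tau(\gamma)=0$. For $\alpha=0$, both sides of (ii) fail, since $b_0\neq 0$ and $\widetilde{Q}_\tau(0)=b_n\neq 0$.

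You should state and prove this version rather than promise a match with $\widetilde{Q}_\tau(\alpha)$. The slip is harmless for the paper: (ii) is never used later, and the comparison of root counts of $P_\sigma$ and $\widetilde{P}_\sigma$ goes through the kernels of $B$ and its adjoint instead.
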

Observe that $B(x) = xQ_\tau(x^{\tau-1})$ and 
             $\widetilde B(x) = x\widetilde Q_\tau(x^{\tau-1})$,
where
\[
	\widetilde B(x) = \sum_{i = 0}^n b_{n-i}^{\tau^i} x^{\tau^i} \in \F_{q}[x].
\]
The $\F_{\tau}$-linear trace function $\Tr : \F_{\tau^{\ell}} \to \F_{\tau}$, 
defined as
\[
	\Tr(x) = \sum_{i=0}^{m/d-1} x^{\tau^{i}},
\]
defines a non-degenerate bilinear form $\Tr(xy)$ with respect to which 
the adjoint $B^*$ of $B$ is found via
\[
	\Tr(B(x)y) = \Tr(xB^*(y)),
\]
when $B$ and $B^*$ are viewed as $\F_\tau$-linear endomorphisms of $\F_q$.
Thus
\[
	B^*(y) = \sum_{i = 0}^n b_i^{\tau^{-i}} y^{\tau^{-i}}, 
\]
since 
\[
	\Tr(x b_i^{\tau^{-i}} y^{\tau^{-i}}) =
	\Tr(x b_i^{\tau^{-i}} y^{\tau^{-i}})^{\tau^i} =
	\Tr(b_i x^{\tau^i} y)
\]
where we allow negative exponents as Ore \cite[p. 572]{ore1933special}. 
Define $\overline{B}$ as
\[
\overline{B}(x) := x^{\tau^n} \circ B^*(x) = \widetilde{B}(x),
\]
which brings $B^*$ back to \eqref{eq_linearized}.
It is evident that $\overline{\overline{B}} = B$ and $\overline{BC}=\overline{C}\overline{B}$
since $B^{**} = B$ and $(BC)^{*} = C^*B^*$.
It is also evident that 
the polynomials $B$ and $\widetilde B$ have 
the same number of distinct $\F_q$-roots 
\begin{equation}\label{eq_ore}
N_q(Q_\tau) \cap (\F_q^\times)^{(\tau-1)} = 
\dim_\tau \ker B = \dim_\tau \ker B^* = \dim_\tau \ker \widetilde{B} = 
N_q(\widetilde Q_\tau) \cap (\F_q^\times)^{(\tau-1)},
\end{equation}
where we denote the number of distinct $\F_q$-roots of a 
polynomial $P \in \F_q[x]$ by
\[
N_q(P) = \#\{x \in \F_q : P(x) = 0 \}
\]
and $S^{(d)} := \{ s^d : s \in S \}$.

In the following we will consider the case 
$q=p^m$, $\sigma = p^k$ and $d = \gcd(m,k)$. 
Let
\[
	P_\sigma(x) = \sum_{i = 0}^{n} a_i x^{(\sigma^i-1)/(\sigma-1)} 
\quad \textrm{ and } \quad 
	\widetilde{P}_\sigma (x) = \sum_{i = 0}^{n} a_{n-i}^{\sigma^i} 
	x^{(\sigma^i-1)/(\sigma-1)},
\]
where $a_0 \ne 0$ and $a_n \ne 0$. 
We will show that 
\begin{lemma}\label{lem_ore_2}
$N_q(P_\sigma) = N_q(\widetilde{P}_\sigma)$.
\end{lemma}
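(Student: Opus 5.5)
The plan is to reduce the count of \emph{all} roots of $P_\sigma$ to kernel dimensions of suitable $\sigma$-linearized polynomials. The obstruction is that Eq.~\eqref{eq_ore} only counts roots in the subgroup of $(\tau-1)$-th powers, so we cover $\F_q^\times$ by cosets of that subgroup and scale by a coset representative. Throughout, let $\tau=p^d$ with $d=\gcd(k,m)$ and let $H=(\F_q^\times)^{(\tau-1)}$. By Lemma~\ref{lem_gcd}, $\gcd(\sigma-1,q-1)=\tau-1$, so the map $x\mapsto x^{\sigma-1}$ on $\F_q^\times$ has image exactly $H$ and fibres of size $\tau-1$. Since $a_0\neq 0$, the element $0$ is a root of neither $P_\sigma$ nor $\widetilde P_\sigma$ (the constant term of $\widetilde P_\sigma$ is $a_n\neq0$). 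So it suffices to count roots in $\F_q^\times$.

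First I set up the $\sigma$-version of Ore's correspondence. For $B(x)=\sum_i a_i x^{\sigma^i}$ we have $B(x)=xP_\sigma(x^{\sigma-1})$. Hence the number of roots of $P_\sigma$ lying in $H$ equals $(|\ker B|-1)/(\tau-1)=(\tau^{\dim_\tau\ker B}-1)/(\tau-1)$. Here $B$ is $\F_\tau$-linear because $\Fix_q(\sigma)=\F_\tau$. Next, $\sigma|_{\F_q}$ generates $\mathrm{Gal}(\F_q/\F_\tau)$, so $\Tr_{\F_q/\F_\tau}$ is also $\sum_j x^{\sigma^j}$. The same computation as in the paper then gives the adjoint $B^*(y)=\sum_i a_i^{\sigma^{-i}}y^{\sigma^{-i}}$, and $\widetilde B=x^{\sigma^n}\circ B^*$ has coefficients $a_{n-i}^{\sigma^i}$. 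Therefore $\dim_\tau\ker B=\dim_\tau\ker\widetilde B$, and $P_\sigma$ and $\widetilde P_\sigma$ have the same number of roots in $H$. This is the $\sigma$-analogue of Eq.~\eqref{eq_ore}.

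Next, the scaling step. For $c\in\F_q^\times$ put $P_c(x)=P_\sigma(cx)=\sum_i a_i c^{(\sigma^i-1)/(\sigma-1)}x^{(\sigma^i-1)/(\sigma-1)}$. This is again of the same shape with nonzero end coefficients. Its roots in $H$ correspond bijectively to the roots of $P_\sigma$ in the coset $cH$. I then compute $\widetilde{P_c}$. Its $i$-th coefficient is $a_{n-i}^{\sigma^i}c^{\sigma^i(\sigma^{n-i}-1)/(\sigma-1)}$. The exponent of $c$ equals $N-(\sigma^i-1)/(\sigma-1)$ with $N=(\sigma^n-1)/(\sigma-1)$. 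Hence
\[
\widetilde{P_c}(x)=c^{N}\,\widetilde P_\sigma(c^{-1}x),
\]
so the roots of $\widetilde{P_c}$ in $H$ correspond to the roots of $\widetilde P_\sigma$ in the coset $c^{-1}H$. Applying the previous paragraph to $P_c$ shows that $P_\sigma$ has as many roots in $cH$ as $\widetilde P_\sigma$ has in $c^{-1}H$.

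Finally, let $c$ run over a set of coset representatives of $\F_q^\times/H$. Then $c^{-1}$ also runs over all cosets, since inversion is a bijection on the quotient group. Summing over cosets gives $N_q(P_\sigma)=N_q(\widetilde P_\sigma)$. The main obstacle I expect is the adjoint step for general $\sigma$ rather than $\tau$, in particular interpreting negative powers $\sigma^{-i}$ as automorphisms of $\F_q$ and checking that the trace form is invariant under $\sigma$. This works precisely because $\sigma$ generates the Galois group over $\F_\tau$. The coefficient bookkeeping in the scaling identity is the other point that needs care, but it is a direct exponent computation.
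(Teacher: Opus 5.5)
Your proposal is correct and follows essentially the same route as the paper. Your linearized polynomial $xP_c(x^{\sigma-1})$ is the paper's $A_j$ with $c=\xi_j$, your identity $\widetilde{P_c}(x)=c^{N}\widetilde P_\sigma(c^{-1}x)$ is the paper's computation $\xi_j^{-(\sigma^n-1)/(\sigma-1)}\overline{A_j}=\widetilde A_j$ with representative $1/\xi_j$, and your sum over cosets is Lemma~\ref{lem_orelemma}. You also spell out why the adjoint may be taken with respect to $\sigma$ rather than $\tau$ (because $\sigma$ generates $\mathrm{Gal}(\F_q/\F_\tau)$), which the paper uses only implicitly.
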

That is to say, we need to prove a variation of Ore's lemma 
as expressed in Eq. \eqref{eq_ore}, for the roots of $P_\sigma$
and $\widetilde P_\sigma$, over all cosets of $(\F_q^\times)^{(\sigma-1)}$,
and for general $\sigma = \tau^{k/d}$.
 Recall that $\tau = p^d$
and $\gcd(\sigma-1,q-1) = \gcd(\tau-1,q-1)$.
We first need another lemma.

\begin{lemma} \label{lem_orelemma}
Let $\Xi = \{\xi_1, \ldots, \xi_{\tau-1}\}$ be 
a set of representatives for $\F_q^\times/(\F_q^\times)^{(\tau-1)}$ and define
\begin{align*}
	A_j(x) := x P_\sigma(\xi_j x^{\sigma-1}) 
	        = \sum_{i = 0}^{n} a_i \xi_j^{(\sigma^i-1)/(\sigma-1)}x^{\sigma^i},
\end{align*}
for $0 \le j \le \tau-2$. We have
\[
	N_{q}(P_\sigma) = \sum_{j=0}^{\tau-2} \frac{N_{q}(A_j) - 1}{\tau- 1}.
\]

\end{lemma}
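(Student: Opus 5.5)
We count roots of $A_j$ by sorting the nonzero ones according to the values they produce under $x\mapsto \xi_j x^{\sigma-1}$, and then use the cosets of $(\F_q^\times)^{(\tau-1)}$ to reach every root of $P_\sigma$. First note that $a_0\ne 0$ gives $P_\sigma(0)=a_0\ne 0$, so every root of $P_\sigma$ is nonzero. Each $A_j$ is an $\F_\tau$-linearized polynomial, because $\sigma^i$ is a power of $\tau=p^d$ and the exponent $(\sigma^i-1)/(\sigma-1)$ is an integer. Hence its set of $\F_q$-roots is an $\F_\tau$-subspace of $\F_q$, and $N_q(A_j)-1$ counts the nonzero roots.

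For $x\ne 0$ we have $A_j(x)=0$ if and only if $P_\sigma(\xi_j x^{\sigma-1})=0$. So the nonzero roots of $A_j$ are exactly the $x\in\F_q^\times$ for which $\xi_j x^{\sigma-1}$ is a root of $P_\sigma$.

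Next we analyse the map $\phi_j\colon \F_q^\times\to\F_q^\times$, $x\mapsto \xi_j x^{\sigma-1}$. Its kernel-type fibres are cosets of $\{x: x^{\sigma-1}=1\}$, and this group has order $\gcd(\sigma-1,q-1)=\gcd(\tau-1,q-1)=\tau-1$ by Lemma~\ref{lem_gcd}, since $\gcd(k,m)=d$. Thus $\phi_j$ is $(\tau-1)$-to-$1$ onto its image. That image is $\xi_j(\F_q^\times)^{(\sigma-1)}$.

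We have $(\F_q^\times)^{(\sigma-1)}=(\F_q^\times)^{(\tau-1)}$, because in a cyclic group the $e$-th powers depend only on $\gcd(e,q-1)$. So the image of $\phi_j$ is the coset $\xi_j(\F_q^\times)^{(\tau-1)}$.

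The number of nonzero roots of $A_j$ is therefore $(\tau-1)$ times the number of roots of $P_\sigma$ lying in this coset. This gives
\[
\frac{N_q(A_j)-1}{\tau-1}=\#\{\alpha\in \xi_j(\F_q^\times)^{(\tau-1)} : P_\sigma(\alpha)=0\}.
\]

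Finally, $\Xi$ is a complete set of coset representatives, and the index of $(\F_q^\times)^{(\tau-1)}$ in $\F_q^\times$ equals $\gcd(\tau-1,q-1)=\tau-1$ because $d\mid m$. So the cosets $\xi_j(\F_q^\times)^{(\tau-1)}$ partition $\F_q^\times$. Summing over $j$ counts every nonzero root of $P_\sigma$ exactly once, and since $0$ is not a root, this total is $N_q(P_\sigma)$.

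The only delicate point is the identification $(\F_q^\times)^{(\sigma-1)}=(\F_q^\times)^{(\tau-1)}$ together with the exact fibre size $\tau-1$. This is where the $\gcd$ computation $\gcd(p^k-1,p^m-1)=p^d-1$ from Lemma~\ref{lem_gcd} is essential. The rest is bookkeeping.
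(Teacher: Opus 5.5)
Your proof is correct and follows the same route as the paper's. The map $x\mapsto \xi_j x^{\sigma-1}$ is $(\tau-1)$-to-$1$ from $\F_q^\times$ onto the coset $\xi_j(\F_q^\times)^{(\tau-1)}$, every root of $P_\sigma$ is nonzero and lies in exactly one such coset, and $0$ is the only other root of $A_j$. Your write-up is more careful than the paper's, which loosely says a root $\beta$ of $P_\sigma$ ``is a root of $A_j$'' when it means that $\beta$ has exactly $\tau-1$ preimages among the nonzero roots of $A_j$.
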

\begin{proof}
Let $\beta \in \F_q^\times$ be a root of $P$. 
Then $\beta$ is a root of $A_j$ if and only if 
$\beta \in \xi_j (\F_q^\times)^{(\tau-1)}$, 
so it is a root of $A_j$ for exactly one $j$. 
Moreover, 
%if $\beta$ is a root of $A_j$, 
%then so is $\beta c$ for $c \in \F_\tau^\times$. 
$x \mapsto \xi_j x^{(\sigma-1)}$
on $\F_q^\times$ 
is $(\tau-1)$-to-$1$
with kernel $\F_\tau^\times$. 
The only other root of each $A_j$ is $0$ 
(which is not a root of $P_\sigma$). 
This proves the statement.
\end{proof}

Note that $A_j$ is $\F_\tau$-linearized (i.e., of type \eqref{eq_linearized} 
since $\sigma = \tau^{k/d}$). Now we can prove Lemma \ref{lem_ore_2}.

\begin{proof}[Proof of Lemma \ref{lem_ore_2}]
Observe
\begin{align*}
\overline{A_j}(x)  
&= \sum_{i = 0}^{n} a_i    ^{\sigma^{n-i}}\xi_j^{\sigma^{n-i}(\sigma^i    -1)/(\sigma-1)} x^{\sigma^{n-i}}\\
&= \sum_{i = 0}^{n} a_{n-i}^{\sigma^{i}  }\xi_j^{\sigma^{i  }(\sigma^{n-i}-1)/(\sigma-1)} x^{\sigma^{i}}\\
&= \sum_{i = 0}^{n} a_{n-i}^{\sigma^{i}  }\xi_j^{(\sigma^{n}-\sigma^i)/(\sigma-1)} x^{\sigma^{i}}.
\end{align*}
Now
\begin{align*}
	{\xi_j}^{-(\sigma^n-1)/(\sigma-1)} \overline{A_j}(x) 
	&= \sum_{i = 0}^{n} a_{n-i}^{\sigma^i} {\xi_j}^{-(\sigma^i-1)/(\sigma-1)}x^{\sigma^{i}} \\
	&= \widetilde{A}_{j}(x),
\end{align*}
since for $\widetilde{A}_j$, we can select the set of representatives $\widetilde{\Xi}$
in Lemma \ref{lem_orelemma} as $\widetilde{\xi}_{j} = 1/\xi_j$ for $\xi_j \in \Xi$.
We conclude that the number of roots  of $\widetilde{A}_{j}$ in $\F_q$ 
is the same as the number of roots of $A_j$, and the result follows from 
Lemma~\ref{lem_orelemma}. 
\end{proof}

Finally, we will prove the following lemma 
which is actually needed in the proof of Theorem~\ref{thm_main_v2}. 

\begin{lemma} \label{lem:auxiliary}
Let $a,b,c \in \F_q$ with $a\neq 0$. Then
\[
\Pi_1(x) = x^{\sigma^2+\sigma+1}-bx^{\sigma^2+\sigma}+ac^{\sigma}x^{\sigma^2}-a^{\sigma+1} \in \F_q[x]
\]
has the same amount of roots in $\F_q$ as 
\[
\Pi_2(x) = x^{\sigma^2+\sigma+1} + cx^{\sigma^2+\sigma} + bx^{\sigma^2} + a\in \F_q[x].
\]
\end{lemma}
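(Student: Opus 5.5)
The plan is to reduce both $\Pi_1$ and $\Pi_2$ to projective polynomials of the shape $P_\sigma(x)=\sum_{i=0}^3 a_i x^{(\sigma^i-1)/(\sigma-1)}$. Then I would connect them using Lemma~\ref{lem_ore_2}, together with a few operations that obviously preserve the number of $\F_q$-roots:
\begin{itemize}
\item the inversion $x\mapsto 1/x$ on $\F_q^\times$;
\item a scaling $x\mapsto \lambda x$ with $\lambda\in\F_q^\times$;
\item multiplying a polynomial by a nonzero constant;
\item applying a field automorphism of $\F_q$ to all coefficients (this maps roots bijectively to roots).
\end{itemize}

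First, because $a\neq 0$, zero is a root of neither polynomial, so only roots in $\F_q^\times$ matter. Substituting $x\mapsto 1/x$ in $\Pi_2$ and clearing the factor $x^{-(\sigma^2+\sigma+1)}$ gives
\[
P(x)=a x^{\sigma^2+\sigma+1}+b x^{\sigma+1}+c x+1 .
\]
This is a projective polynomial with $a_0=1$, $a_1=c$, $a_2=b$, $a_3=a$, and it has the same number of $\F_q$-roots as $\Pi_2$. Lemma~\ref{lem_ore_2} then gives $N_q(P)=N_q(\widetilde P)$, where
\[
\widetilde P(x)=x^{\sigma^2+\sigma+1}+c^{\sigma^2}x^{\sigma+1}+b^{\sigma}x+a .
\]
The same substitution applied to $\Pi_1$ gives
\[
R(x)=-a^{\sigma+1}x^{\sigma^2+\sigma+1}+ac^{\sigma}x^{\sigma+1}-bx+1 ,
\]
which is again projective and has the same root count as $\Pi_1$. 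Applying Lemma~\ref{lem_ore_2} to it, if useful, gives
\[
\widetilde R(x)=x^{\sigma^2+\sigma+1}+(-b)^{\sigma^2}x^{\sigma+1}+a^{\sigma}c^{\sigma^2}x-a^{\sigma+1}.
\]

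It then remains to match one of $P,\widetilde P$ with one of $R,\widetilde R$. I would substitute $x\mapsto\lambda x$ in $R$ or $\widetilde R$, normalise the leading or the constant coefficient, and possibly raise all coefficients to a suitable power of $\sigma$. The scaling $\lambda$ would be chosen as a power of $a$ times a sign; natural first candidates are $\lambda=-1/a$ and $\lambda=-a^{-\sigma}$. The reason to expect this can work is that the powers of $a$ in $R$ occur exactly with the weights $\sigma+1$ and $\sigma^2+\sigma+1$ of the monomials carrying them. Before aiming for an exact coefficient match, I would first check that the invariant "power of $a$ relative to the monomial weight" agrees between the two sides.

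The main obstacle is this bookkeeping. In $R$ the coefficients $b$ and $c$ sit in the linear and $(\sigma+1)$-terms respectively, with Frobenius twists different from those in $P$ and $\widetilde P$. So it is not clear whether a single scaling suffices, or whether one has to combine the tilde operation on both sides with a global coefficient automorphism to align the twists. If the direct matching fails, my fallback is to work one level down, at the linearized polynomials $A_j$ of Lemma~\ref{lem_orelemma}. The desired correspondence should then come from a composition identity in $\mathfrak R_{q,\tau}$ relating $x^{\sigma^3}+\dots$ and its adjoint, which is the mechanism underlying Lemma~\ref{lem_ore_2}. The sign issue from $(-b)^{\sigma^2}$ disappears when $p=2$ and becomes $-b^{\sigma^2}$ for odd $p$, so it can be absorbed into the choice $\lambda=-\cdots$.
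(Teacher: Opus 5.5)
Your route is the paper's: reciprocate, apply Lemma~\ref{lem_ore_2}, and connect the two sides by a scaling by $-a^{\pm1}$ plus one global Frobenius twist of the coefficients. The paper applies $\sigma$ to the coefficients of $\Pi_1$, substitutes $x\mapsto -ax$, normalises and reciprocates, and lands on your $\widetilde P$. Your formulas for $P$, $\widetilde P$, $R$ and $\widetilde R$ are all correct. But the proposal stops exactly where the lemma has content. You never exhibit the identification between the $\Pi_1$-side and the $\Pi_2$-side, and you say it is unclear whether a scaling suffices. As written, this is a plan rather than a proof.

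The missing step is short. Look at the relative Frobenius twist between the $b$- and $c$-coefficients: it is $b,c^{\sigma}$ in $R$, $b^{\sigma},c^{\sigma^2}$ in $\widetilde P$, and $b,c$ in $P$. This tells you to pair $R$ with $\widetilde P$, after applying $\sigma$ to all coefficients of $R$:
\[
R^{\sigma}(x):=-a^{\sigma^2+\sigma}x^{\sigma^2+\sigma+1}+a^{\sigma}c^{\sigma^2}x^{\sigma+1}-b^{\sigma}x+1 .
\]
Now use your first candidate $\lambda=-1/a$. For odd $p$, $\sigma+1$ is even and $\sigma^2+\sigma+1$ is odd; for $p=2$ signs are irrelevant. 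Hence
\[
a\,R^{\sigma}(-x/a)=x^{\sigma^2+\sigma+1}+c^{\sigma^2}x^{\sigma+1}+b^{\sigma}x+a=\widetilde P(x).
\]
Therefore $N_q(\Pi_1)=N_q(R)=N_q(R^{\sigma})=N_q(\widetilde P)=N_q(P)=N_q(\Pi_2)$, which completes the argument. Neither $\widetilde R$ nor the fallback to the linearized polynomials $A_j$ is needed.
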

\begin{proof}

We first perform a series of transformations to $\Pi_1$ that 
leave the number of roots invariant. First we apply 
$\sigma$ to every coefficient, which leaves 
the number of roots invariant (i.e., $x^\sigma \circ \Pi_1 \circ x^{\sigma^{-1}}$). 
Then we apply $x \mapsto -xa$ and divide the resulting polynomial 
by $a^{\sigma^2+\sigma}$. We arrive at the polynomial
\[
ax^{\sigma^2+\sigma+1}+b^\sigma x^{\sigma^2+\sigma}+c^{\sigma^2}x^{\sigma^2}+1.
\]
Reciprocation (the transformation $x \mapsto 1/x$ and multiplying the 
result by $x^{\sigma^2+\sigma+1}$) yields
\[
x^{\sigma^2+\sigma+1}+c^{\sigma^2}x^{\sigma+1}+b^\sigma x+a.
\]

By Lemma~\ref{lem_ore_2}, this polynomial has the same number of 
roots as $ax^{\sigma^2+\sigma+1} + bx^{\sigma+1} + cx + 1$.
Now by another reciprocation we arrive at
\[
x^{\sigma^2+\sigma+1} + cx^{\sigma^2+\sigma}+bx^{\sigma^2}+ a = \Pi_2(x).
\]
\end{proof}

\subsection{A generalization of a theorem of Weng and Zeng}

We need the following simple generalization of 
\cite[Proposition 3.6.]{weng2012further}
in the proof of Theorem \ref{thm_sqh}.

\begin{proposition} \label{prop_wengzeng}
Let $q=p^m$ be odd, and $F \in \SQH(n,q)$ satisfy
\begin{enumerate}
\item $F(\U x) = 0 \iff \U x = \U 0$ and
\item $F$ is $2$-to-$1$ on $\F_q^n\setminus\{\U{0}\}$. 
\end{enumerate}
Let $\alpha \in \F_q^\times$. Then 
\begin{enumerate}
\item if $n$ is even or  $\alpha$ is a square in $\F_q^\times$ then 
$\alpha\im(F)=\im(F)$, and
\item if $n$ is odd and $\alpha$ is a non-square in $\F_q^\times$ then 
$\alpha\im(F)\cap \im(F)=\{0\}$.
\end{enumerate}
\end{proposition}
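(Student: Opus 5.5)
Write $N=q^n-1$ and $I=\im(F)\setminus\{0\}$. The plan is a counting argument with two ingredients: the $(\sigma+1)$-homogeneity of $F$, and the fact that $\alpha\im(F)$ and $\im(F)$ are themselves images of semiquadratic maps. From (i) and (ii), $|I|=N/2$. Since $p$ is odd, $\sigma+1$ is even. By Lemma~\ref{lem_gcd}, in the relevant case $m/\gcd(k,m)$ odd we have $\gcd(\sigma+1,q-1)=2$, so the $(\sigma+1)$-st powers in $\F_q^\times$ are exactly the squares. This is forced anyway: if $\gcd(\sigma+1,q-1)>2$, homogeneity would make $F$ more than $2$-to-$1$. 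Hence for a square $\alpha=\mu^{\sigma+1}$ we get $\alpha F(\U x)=F(\mu\U x)$, so $\alpha\im(F)\subseteq\im(F)$, and equality holds by cardinality. This settles the square case for all $n$.

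Now let $\alpha$ be a nonsquare. Since the squares act on $\im(F)$, the set $\alpha\im(F)$ depends only on the square class. It therefore suffices to decide whether $I$ is closed under multiplication by one nonsquare $\eta$. The key tool is that $\eta F$ is again in $\SQH(n,q)$ and is $2$-to-$1$, so its polarization is again a commutative presemifield. I would bring in the polarization: for $\U a\neq\U 0$ the map $\U x\mapsto B(\U x,\U a)+B(\U a,\U x)$ is bijective by Theorem~\ref{thm_sqh}(i). Consequently $D_{\U a}(\U x)=F(\U x+\U a)-F(\U x)$ is a bijection of $\F_q^n$, i.e.\ $F$ is planar. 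Using this, I count the pairs $(\U x,\U y)$ with $F(\U x)-F(\U y)=\U c$ for fixed $\U c\neq 0$. Substituting $\U x=\U y+\U a$ and applying planarity gives exactly $q^n-1$ such pairs: one for each $\U a\neq 0$, plus none from $\U a=0$. On the other hand, writing $r(\U v)=|F^{-1}(\U v)|$, the same count equals $\sum_{\U v}r(\U v)r(\U v-\U c)$. Here $r(\U 0)=1$, $r=2$ on $I$, and $r=0$ elsewhere.

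Next I average this identity over the $\F_q^\times$-orbit of $\U c$, i.e.\ over $\U c\in\lambda\U c_0$, and exploit the homogeneity $r(\mu^{\sigma+1}\U v)=r(\U v)$. Let $\delta=1$ if $I=\eta I$ and $\delta=0$ if $I\cap\eta I=\emptyset$. A priori $I$ could be neither; for that general situation I would introduce $t=|I\cap\eta I|$ and aim for an equation in $t$. Summing the pair count over all $\U c\neq0$ is uninformative, since it just gives $(N+1)^2-(N+1)$ on both sides. So the refinement I would use is to sum over $\U c\in I$ and, separately, over $\U c\in\eta I$. For $\U c\in I$ the terms $r(\U 0)r(-\U c)$ contribute according to whether $-1$ is a square, and similarly for $\eta$. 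This yields a linear relation between $t$ and the number of solutions of $F(\U x)-F(\U y)\in\eta I$, which in turn is expressible through $t$ again via planarity. Cross-multiplying should give $t\in\{0,|I|\}$, with the value determined by the parity of $n$ through a character-sum sign $(-1)^{n}$. Namely, the quadratic-character sum $\sum_{\U x}\chi(\text{coordinate of }F)$ behaves like a Gauss sum raised to the $n$-th power.

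Indeed, the cleanest route for this last step is Fourier analysis. For a nontrivial additive character $\psi$ of $\F_q^n$, planarity gives $|\widehat{F}(\psi)|^2=q^n$, where $\widehat F(\psi)=\sum_{\U x}\psi(F(\U x))$. Moreover, $\widehat F(\psi)=1+2\sum_{\U v\in I}\psi(\U v)$. Homogeneity shows that $\widehat F(\psi_\lambda)$ depends only on the square class of $\lambda$, and summing over $\lambda\in\F_q^\times$ pins down $\sum\widehat F$. I expect the main obstacle to be determining the sign/phase of $\widehat F(\psi)$. My approach: it equals $\pm\sqrt{q^n}$ times the quadratic Gauss sum $G^n$ up to a $\sigma$-dependent normalization, and for $n$ odd multiplying by a nonsquare flips the sign while for $n$ even it does not. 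Comparing the indicator sums of $I$ and $\eta I$ then forces $\eta I=I$ when $n$ is even and $\eta I\cap I=\emptyset$ when $n$ is odd.
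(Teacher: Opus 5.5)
\textbf{There is a genuine gap in the nonsquare case.}

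Several parts of your proposal are sound:
\begin{itemize}
\item The square case is exactly the paper's argument.
\item Planarity does follow from Theorem~\ref{thm_sqh}(i), and $|\widehat F(\psi)|^2=q^n$ with $\widehat F(\psi)=1+2\sum_{\U v\in I}\psi(\U v)$.
\item $\widehat F(\psi_\lambda)$ depends only on the square class of $\lambda$.
\item Your last deduction also works. If $\widehat F(\psi_\eta)=\widehat F(\psi)$ for all nontrivial $\psi$, then $\eta I=I$. If $\widehat F(\psi_\eta)=-\widehat F(\psi)$ for all nontrivial $\psi$, then $1_{\{\U 0\}}+1_I+1_{\eta I}$ has vanishing nontrivial Fourier coefficients and total mass $q^n$. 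Hence it is identically $1$, and $I\cap\eta I=\emptyset$.
\end{itemize}
However, the sign relation $\widehat F(\psi_\eta)=(-1)^n\widehat F(\psi)$ for an $\F_q$-nonsquare $\eta$ is the entire content of the proposition, and you only assert it. As written, $\pm\sqrt{q^n}\,G^n$ has absolute value $q^n$, not $q^{n/2}$. The counting paragraph likewise stops at ``should give $t\in\{0,|I|\}$''.

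\textbf{The missing idea is the right field over which to compute the sign.}
\begin{itemize}
\item Over $\F_q$, $\psi\circ F$ is not a quadratic form; it is $\sigma$-semiquadratic. So there is no $\F_q$-determinant or Gauss-sum evaluation to invoke.
\item Over $\F_p$ it is a quadratic form in $nm$ variables, but that only controls scaling by $\lambda\in\F_p^\times$. When $m$ is even, every element of $\F_p$ is a square in $\F_q$, so this cannot detect $\F_q$-nonsquares at all.
\item The correct field is the fixed field $\F_{p^d}$ of $\sigma$. There $F$ is a genuine quadratic map in $nm'$ variables, with $m'=m/d$ odd; this oddness is forced by $\gcd(\sigma+1,q-1)=2$.
\item Write $\psi=\chi\circ\ell$ with $\chi$ the canonical character of $\F_{p^d}$ and $\ell$ a nonzero $\F_{p^d}$-linear functional. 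Then $\ell\circ F$ is a nondegenerate $\F_{p^d}$-quadratic form, nondegenerate by planarity.
\item Hence $\widehat F(\psi)=\eta_{p^d}(\det(\ell\circ F))\,G^{nm'}$, where $\eta_{p^d}$ is the quadratic character and $G$ the quadratic Gauss sum of $\F_{p^d}$.
\item Scaling by $\epsilon\in\F_{p^d}^\times$ multiplies this by $\eta_{p^d}(\epsilon)^{nm'}=\eta_{p^d}(\epsilon)^n$.
\item Finally, because $m'$ is odd, nonsquares of $\F_{p^d}$ remain nonsquares in $\F_q$. So every $\F_q$-nonsquare has the form $\epsilon\lambda$ with $\epsilon$ a nonsquare of $\F_{p^d}$ and $\lambda$ a square of $\F_q$, and the square factor is absorbed by homogeneity.
\end{itemize}

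This is precisely the structure of the paper's proof. The paper views $F\in\QH(nm',p^d)$, quotes Weng--Zeng for the statement over $\F_{p^d}$ (which is this Gauss-sum computation), and then uses the decomposition $\alpha=\epsilon\lambda$. With these steps supplied, your Fourier route becomes a self-contained version of the same argument. Without them, and in particular without using the fixed field of $\sigma$ and the oddness of $m/d$ in the nonsquare case, the nonsquare case is not proved.
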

\begin{proof}

Let $F \in \SQH(n,q)$ with companion automorphism $\sigma=p^k$ 
and let $\F_{p^d}$ be the fixed field of $\sigma$. 
Since $F$ is $2$-to-$1$, we have by Lemma~\ref{lem_gcd} that 
$m/\gcd(k,m)=m/d$ is odd. Note first that, 
\begin{equation}\label{eq_sq}
\alpha \textrm{ is a square in } \F_q^\times \implies \alpha\im(F)=\im(F)
\end{equation}
since we can write $\alpha=\mu^{\sigma+1}$ for some $\mu \in \F_q^\times$
since $\gcd(\sigma+1,q-1) = 2$ by Lemma \ref{lem_gcd},
so $F(\mu x) = \alpha F(x)$ for all $x \in \F_{q^n}$.
Set $m=dm'$, so $F \in \QH(nm',p^d)$. 
Let $\epsilon \in \F_{p^d}^\times$. 
By~\cite[Proposition 3.6.]{weng2012further}, we have 
\begin{equation}\label{eq_wengzeng}
\epsilon \im(F)= \begin{cases} 
\im(F), & nm' \text{ is even or } \epsilon \text{ is a square in }\F_{p^d} \\
\F_{q}^n\setminus (\im(F)\setminus\{0\}), & nm' \text{ is odd and } 
\epsilon \text{ is a non-square in }\F_{p^d}.
\end{cases}
\end{equation}
The square-roots of non-squares of $\F_{p^d}^\times$ lie in the
degree-$2$ extension $\F_{p^{2d}}$ and $\F_{q} \cap \F_{p^{2d}} = \F_{p^d}$ 
since $m'$ is odd. Therefore $\epsilon \in \F_{p^d}^\times$ is 
a square in $\F_{p^d}^\times$ if and only if $\epsilon$ is a square in $\F_q^\times$,
and in turn, we can write any $\alpha \in \F_q^\times$ as $\alpha=\epsilon\lambda$, 
where $\epsilon\in \F_{p^d}^\times$ and 
$\lambda$ is a square in $\F_q^\times$.
Then 
\[
\alpha\im(F) = \epsilon (\lambda\im(F))=\epsilon  \im(F) =
\begin{cases} 
\im(F),                                   & n \text{ is even or } \epsilon \text{ is a square in }\F_{p^d} \\
\F_{q^n}\setminus (\im(F)\setminus\{0\}), & n \text{ is odd and } \epsilon \text{ is a non-square in }\F_{p^d},
\end{cases}
\]
where we use the fact that $\lambda\im(F)=\im(F)$ by Eq. \eqref{eq_sq}, 
the fact that $m'$ is odd, and Eq.~\eqref{eq_wengzeng}.
\end{proof}

\section*{Acknowledgments}

We thank Chin Hei Chan for reading and spotting a mistake in a preliminary version of this paper. 

\section*{Author Contributions} 
F.G.: conceptualization, methodology, investigation (lead),
writing -- original draft (equal).
L.K.: investigation (supporting),  writing -- original draft (equal).

\bibliographystyle{amsplain}
\bibliography{gk_13} 

\bigskip
\hrule
\bigskip
\end{document}